\def\Vol{\operatorname{Vol}}
\def\lst{\operatorname{LST}}
\def\tri{\mathcal{T}}
\def\manifold{M}
\def\core{M^c}
\def\closure{\overline{M}}
\def\Z{\mathbb{Z}}
\def\R{\mathbb{R}}
\DeclareMathOperator{\Sym}{Sym}
\DeclareMathOperator{\nhd}{Nhd}
\def\Tee{\Delta_{\emptyset}}
\def\Ttt{\Delta_{\mathfrak{tt}}}
\def\Tqtt{\Delta_{\mathfrak{qtt}}}
\def\Tqq{\Delta_{\mathfrak{qq}}}
\def\Tqqq{\Delta_{\mathfrak{qqq}}}
\def\even{\mathfrak{e}}
\def\Tt{\Delta_{\mathfrak{t}}}
\def\Tq{\Delta_{\mathfrak{q}}}
\def\nee{n_{\emptyset}}
\def\ntt{n_{\mathfrak{tt}}}
\def\nqtt{n_{\mathfrak{qtt}}}
\def\nqq{n_{\mathfrak{qq}}}
\def\nqqq{n_{\mathfrak{qqq}}}
\theoremstyle{plain}
\newtheorem{theorem}{Theorem}
\newtheorem{lemma}[theorem]{Lemma}
\newtheorem{proposition}[theorem]{Proposition}
\newtheorem{corollary}[theorem]{Corollary}
\theoremstyle{definition}
\newtheorem*{definition*}{Definition}
\theoremstyle{remark}
\newtheorem{remark}[theorem]{Remark}
\numberwithin{equation}{section}
\newtheorem*{rep@theorem}{\rep@title} \newcommand{\newreptheorem}[2]{%
\newenvironment{rep#1}[1]{%
\def\rep@title{\bf #2 \ref{##1} }%
\begin{rep@theorem} }%
{\end{rep@theorem} } }
\begin{document}

\title{On minimal ideal triangulations of cusped hyperbolic 3--manifolds}
\author{William Jaco, Hyam Rubinstein, Jonathan Spreer and Stephan Tillmann}

\begin{abstract}
Previous work of the authors studies minimal triangulations of closed 3--manifolds using a characterisation of low degree edges, embedded layered solid torus subcomplexes and 1--dimensional $\Z_2$--cohomology. The underlying blueprint is now used in the study of minimal ideal triangulations. As an application, it is shown that the monodromy ideal triangulations of the hyperbolic once-punctured torus bundles are minimal.
\end{abstract}

\primaryclass{57Q15, 57N10, 57M50, 57M27}
\keywords{3--manifold, minimal triangulation, complexity, once-punctured torus bundle}
\makeshorttitle


\section{Introduction}

Thurston observed that ideal triangulations are a useful tool to study the geometry and topology of cusped hyperbolic 3--manifolds. Geometric triangulations are useful to study geometric properties of a manifold. Minimal triangulations, i.e.\thinspace topological ideal triangulations using the least number of ideal 3--simplices, are used in census enumeration, and as a platform to study the topology of the manifold using normal surface theory. In this paper we establish basic facts about minimal ideal triangulations of cusped hyperbolic 3--manifolds in analogy with our previous work for closed 3--manifolds~\cite{Jaco-Z2-2017,Jaco-minimal-2009, Jaco-Z2-2013}.

Throughout this paper, by a \emph{cusped hyperbolic 3--manifold} we mean an orientable non-compact 3--manifold $\manifold$ that admits a complete hyperbolic structure of finite volume. Such a manifold is known to be the interior of a compact, irreducible, $\partial$--irreducible, atoroidal, anannular 3--manifold $\overline{\manifold}$ with non-empty boundary a finite union of tori. If $\overline{\manifold}$ has exactly $n$ boundary components, we say that $\manifold$ is $n$--cusped.

Let $c(\manifold)$ be the minimal number of ideal tetrahedra in a (topological) ideal triangulation of the 
cusped hyperbolic 3--manifold $\manifold.$ In this paper, we use the \emph{topology} of $\manifold$ to study the anatomy of a minimal ideal triangulation, and with this derive lower bounds on the complexity, and apply it to describe some infinite families of minimal ideal triangulations.
Before stating our results, we describe previous work on minimal ideal triangulations using the \emph{geometry} of the manifold.


\subsection{Minimal ideal triangulations via geometry}

The canonical cell decompositions of cusped hyperbolic 3--manifolds due to Epstein and Penner~\cite{Epstein-euclidean-1988} are generically ideal triangulations. It follows from work of Gu\'eritaud \cite[Theorem 2.1.7 in Section 2.1.4]{GueritaudPhDThesis} that the canonical cell decompositions of hyperbolic 2--bridge link complements are ideal triangulations. However, these are not minimal in general. We thank Jessica Purcell and the anonymous referee for pointing us to these examples.

If four ideal tetrahedra in a geometric ideal triangulation form a non-convex ideal octahedron, then there are 4--4 moves on this triangulation that result in non-geometric ideal triangulations. Neil Hoffman verified that in the current censuses of cusped hyperbolic 3--manifolds up to 6 tetrahedra, there is always at least one minimal triangulation that is geometric, but that non-geometric minimal ideal triangulations appear through the presence of 4--4 moves as described above. In particular, it is currently not known whether there is always at least one minimal ideal triangulation that is geometric.

Let $v_3$ denote the volume of a regular ideal hyperbolic tetrahedron; this is approximately $1.0149.$ An argument due to Thurston~\cite{thurston78-lectures} shows that 
\begin{equation}\label{eq:volume bound}
c(\manifold) \ge \frac{\Vol(\manifold)}{v_3}.
\end{equation}
In particular, any geometric triangulation only involving regular ideal hyperbolic tetrahedra is minimal. The figure eight knot complement has such a triangulation. By taking finite sheeted  coverings this leads to infinitely many minimal triangulations that are geometric and for which the above inequality is an equality. A census of such manifolds decomposed into at most 25 ideal regular ideal hyperbolic tetrahedra was given by Fominykh, Garoufalidis, Goerner, Tarkaev and Vesnin~\cite{Fominykh-census-2016}. 

In general the gap in inequality~(\ref{eq:volume bound}) can be arbitrarily large. To see this neither requires explicit minimal triangulations nor computation of volumes: Thurston's theory of hyperbolic Dehn surgery implies that for any $n\ge 1$ there are sequences of $n$--cusped hyperbolic 3--manifolds with bounded volume and whose complexities are an unbounded sequence.

An equivalent approach to complexity is via Matveev's theory of special spines. From this point of view,
Petronio and Vesnin~\cite{Petronio-two-sided-2009} give a lower bound on complexity using a volume estimate.
Ishikawa and Nemoto~\cite{Ishikawa-construction-2016} combined this with an upper bound on the complexity of 2--bridge links and determined the complexity of an infinite family of 2--bridge links. These ideal triangulations were described by Sakuma and Weeks~\cite{Sakuma-examples-1995} and proved to be canonical by Gu\'eritaud \cite[Theorem 2.1.7 in Section 2.1.4]{GueritaudPhDThesis}.
Independently, Akiyoshi, Sakuma, Wada and Yamashita have announced a proof of this statement~\cite{Akiyoshi-punctured-2007}. A proof of this fact for the fibered 2--bridge links was given by Sakata~\cite{Sakata-canonical-2015}.


\subsection{Minimal ideal triangulations via topology}

 In this paper, we are first concerned with the anatomy of a minimal ideal triangulation.
In particular, in \S\ref{sec:anatomy} we characterise edges of low degree, and analyse subcomplexes that we call \emph{maximal layered $\partial$--punctured solid tori.} These are solid tori with a point missing on the boundary, and imbued with a special ideal triangulation.
These subcomplexes were exhibited by 
Gu\'eritaud and Schleimer~\cite{Gueritaud-canonical-2010} in canonical decompositions of cusped hyperbolic 3--manifolds satisfying certain genericity hypotheses, and are thus a natural part of the structure of an ideal triangulation to analyse from a geometrical viewpoint. Our methods show that they also have an important topological role to play.
In \S\ref{sec:normsurfs} we describe normal surfaces representing $\Z_2$--homology classes in ideal triangulations. We use this in \S\ref{sec:quadsurfs} to prove our main result Theorem~\ref{thm:sumofnorms}, which is analogous to the main result of \cite{Jaco-Z2-2013}. Before we can state it, we need to introduce some extra definitions.

Let $\manifold$ be a cusped hyperbolic 3--manifold, and let
$S$ be an embedded closed surface representing a given $c \in H_2(\manifold;\Z_2).$ 
We wish to emphasise that we do not work with $H_2(\overline{\manifold}, \partial \overline{\manifold} ;\Z_2).$
An analogue of Thurston's norm \cite{Thurston-norm-1986} is defined in \cite{Jaco-Z2-2013} as follows. If $S$ is connected, let $\chi_{-}(S) = \max \{ 0,-\chi(S)\},$ and otherwise let
\[\chi_{-}(S) = \sum_{S_i\subset S} \max \{ 0,-\chi(S_i)\},\]
where the sum is taken over all connected components of $S.$ Note that $S_i$ is not necessarily orientable. Define:
\[|| \ c \ || = \min \{ \chi_{-}(S) \mid [S]= c\}.\]
The surface $S$ representing a class $c \in H_2(\manifold;\Z_2)$ is said to be \emph{$\Z_2$--taut} if no component of $S$ is a sphere or torus and $\chi(S) = -|| \ c \ ||.$ Note that $\manifold$ contains no projective plane or Klein bottle.
As in \cite{Thurston-norm-1986}, one observes that every component of a $\Z_2$--taut surface is non-separating and geometrically incompressible.

\begin{theorem}
  \label{thm:sumofnorms}
  Let $\tri$ be an ideal triangulation of the cusped hyperbolic $3$--manifold $\manifold.$ If $H \le H_2 (\manifold,\Z_2)$ is a rank $2$ subgroup, then 
  $$|\tri| \geq \sum \limits_{0 \neq c \in H} || \ c\ ||. $$
   Moreover, in the case of equality the triangulation is minimal, each canonical normal representative of a non-zero element in $H\le H_2 (\manifold,\Z_2)$ is taut and meets each tetrahedron in a quadrilateral disc, and the number of tetrahedra in the triangulation is even.
\end{theorem}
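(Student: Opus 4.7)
My plan is to mirror, in the ideal-triangulation setting, the strategy of \cite{Jaco-Z2-2013} for closed 3--manifolds. The three non-zero classes $c_1, c_2, c_3 \in H$ satisfy $c_1+c_2+c_3=0$. By $\Z_2$--Poincar\'e--Lefschetz duality for $\manifold \cong \mathrm{int}(\closure)$, they correspond to cohomology classes $\alpha_1, \alpha_2, \alpha_3 \in H^1(\closure,\partial \closure;\Z_2)$ with $\alpha_1+\alpha_2+\alpha_3=0$. I would represent each $\alpha_i$ by a cellular $\Z_2$--1--cocycle on $\tri$ and let $S_i$ be the canonical $\Z_2$--normal surface representing $c_i$ as constructed in Section~\ref{sec:normsurfs}. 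By the definition of the norm, $\chi_-(S_i)\geq \|c_i\|$.

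The central step is a local estimate in each tetrahedron $\simplex$. The restricted cocycle $\alpha_i|_\simplex$ corresponds to an unordered partition of the four vertices of $\simplex$: trivial (no disc of $S_i$ in $\simplex$), a $\{3,1\}$--partition (a triangular disc), or a $\{2,2\}$--partition (a quadrilateral disc). Since the three restricted cocycles sum to zero in the rank-$3$ space of cocycles on $\simplex$, the possible triples fall into three configurations: all three trivial; exactly two equal and non-zero with the third trivial; or all three distinct and non-zero. A case-by-case count of normal discs, arcs on faces and intersection points on edges, using the CW decomposition of each $S_i$, should yield the inequality
\[\sum_i \chi_-(S_i) \;\leq\; |\tri|.\]
Combined with $\chi_-(S_i)\geq\|c_i\|$ this gives the main inequality $|\tri|\geq \sum_c \|c\|$.

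In the equality case, both $\chi_-(S_i)=\|c_i\|$ and the local bound must be tight in every tetrahedron. The first forces each $S_i$ to be $\Z_2$--taut, where one uses the hypothesis that $\manifold$ contains no projective planes or Klein bottles to rule out these as components. A refinement of the case analysis shows that tightness of the local bound forces $(\alpha_1|_\simplex,\alpha_2|_\simplex,\alpha_3|_\simplex)$ to be the three distinct $\{2,2\}$--partitions in every tetrahedron, so each $S_i$ meets every $\simplex$ in exactly one quadrilateral disc. Minimality of $\tri$ is then immediate, since any ideal triangulation of $\manifold$ satisfies the same inequality.

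The parity assertion is the most delicate step. In the equality case the three surfaces are in general position, and the triple intersection $S_1\cap S_2\cap S_3$ consists of exactly one point per tetrahedron. I would argue that the mod-$2$ count $|\tri|\bmod 2$ of these triple points is forced to vanish, either by a combinatorial matching argument for the three quadrilateral slopes across shared faces, or by evaluating the cup product $\alpha_1\cup\alpha_2\cup\alpha_3 \in H^3(\closure,\partial\closure;\Z_2)$ against the fundamental class and exploiting the structure of the $\Z_2$--cohomology ring of a compact $3$--manifold with torus boundary. I expect this parity step to be the main subtlety beyond the closed-manifold argument; the local estimate and the equality structure are relatively direct adaptations of \cite{Jaco-Z2-2013}.
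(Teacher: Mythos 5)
Your overall framing---dual cocycles, canonical normal representatives, the classification of the three restricted cocycles in each tetrahedron, and the deduction of minimality and the all-quadrilateral structure in the equality case---matches the paper's setup, and your first suggested route to the parity claim (matching the three quadrilateral slopes across shared faces, which amounts to the two orientation types of a tetrahedron of type $\Tqqq$ alternating across every face) is exactly what the paper does. But the central step of your proposal does not work. There is no per-tetrahedron estimate yielding $\sum_i \chi_-(S_i)\le|\tri|$: the Euler characteristics of the canonical surfaces are governed by a \emph{global} identity (equation~(\ref{eq:e}) and Lemma~\ref{lem:formula for degree 3 edges}) in which the number and the \emph{degrees} of the $H$--even edges enter, and the desired inequality reduces to bounding $\even_3$ by $\ntt+\nqtt+2D+\sum_{d\ge 5}(d-4)\even_d$, i.e.\ to controlling the degree--three $H$--even edges. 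Worse, the intermediate inequality $\sum_i\chi_-(S_i)\le|\tri|$ is false in general: the canonical representatives can be compressible (for instance, when an $H$--even edge is surrounded by tetrahedra of type $\Tqq$ with matching colours, two of the $S_i$ contain compressible tubes around that edge), and the paper must replace $\|c_i\|\le-\chi(S_i)$ by the stronger bound $\|c_i\|\le-\chi(S_i)-2d_i$ coming from independent compression discs in order to close the gap in several configurations (its cases $(m,d)=(2,4),(3,6),(2,5),(1,4)$).

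What is missing from your proposal is therefore essentially the whole body of the proof: the reduction to a minimal triangulation normalised by 4--4 moves; the anatomy results of Section~\ref{sec:anatomy} (no edges of degree one or two, every degree--three edge lies in an embedded $\lst^\star(1,3,4)$ inside a maximal layered $\partial$--punctured solid torus by Lemmas~\ref{lem:lst no folds} and \ref{lem:degreethree}, and two such maximal subcomplexes meet in at most an edge by Lemma~\ref{lem:intersection}); and the global deficit/balance/gain bookkeeping that charges each degree--three edge against high-degree $H$--even edges, tetrahedra of types $\Ttt$ and $\Tqtt$, and compression discs of the canonical surfaces. Your closing remark that the local estimate is a ``relatively direct adaptation'' of \cite{Jaco-Z2-2013} inverts the difficulty: the local estimate is precisely the part that fails, and even in the closed case the argument is not tetrahedron-local. (Your alternative cup-product route to the parity statement is an interesting idea, but as written it would still require proving the vanishing of the relevant triple product for the relative classes and that each $\Tqqq$ tetrahedron carries exactly one triple point; the paper's combinatorial orientation argument is shorter and already available once the equality analysis forces all tetrahedra to be of type $\Tqqq$.)
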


As an application, we obtain the following statement.

\begin{theorem}
  \label{thm:torusbundles}
  Monodromy ideal triangulations of once-punctured torus bundles are minimal.
\end{theorem}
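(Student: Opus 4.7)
The plan is to prove $c(M_\phi) = n$, where $n$ is the length of the $LR$-word representing the hyperbolic monodromy $\phi \in \mathrm{SL}(2,\Z)$. The upper bound $c(M_\phi) \leq n$ is immediate from the monodromy triangulation itself: each letter in the $LR$-decomposition corresponds to a diagonal flip on the ideal triangulation of the fiber $T^*$, and each flip contributes exactly one ideal tetrahedron, giving a triangulation with $n$ tetrahedra.

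For the lower bound, I first handle the case $\phi \equiv I \pmod 2$, where the Wang sequence of the fibration $T^* \to M_\phi \to S^1$ and the vanishing $H_2(T^*;\Z_2) = 0$ give
\[
H_2(M_\phi;\Z_2) \;\cong\; \ker\bigl(\phi_* - I : H_1(T^*;\Z_2) \to H_1(T^*;\Z_2)\bigr) \;\cong\; \Z_2^2.
\]
Setting $H = H_2(M_\phi;\Z_2)$, the strategy is to apply Theorem~\ref{thm:sumofnorms} in the equality case. For each non-zero $c \in H$, represent $c$ by a normal surface $S_c$ constructed from an essential simple closed curve $\gamma_c \subset T^*$ with $[\gamma_c] = c$ in $H_1(T^*;\Z_2)$, by suspending $\gamma_c$ through the sequence of flips and capping off with a subsurface of the fiber that cobounds $\gamma_c$ and $\phi(\gamma_c)$ (which exists because $\phi_* c = c$). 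The key combinatorial feature of the monodromy triangulation is that in each ideal tetrahedron the three normal quadrilateral types correspond bijectively to the three non-zero classes of $H_1(T^*;\Z_2)$, forcing $S_c$ to meet every tetrahedron in exactly one quadrilateral disc of the type associated to $c$. Using that $M_\phi$ is hyperbolic (hence contains no essential sphere, torus, projective plane, or Klein bottle) one checks that each $S_c$ is $\Z_2$--taut, so $\|c\| = \chi_-(S_c)$. An Euler characteristic computation from the cell structure then yields $\sum_{0 \neq c \in H} \|c\| = n$, equality holds in Theorem~\ref{thm:sumofnorms}, and the monodromy triangulation is minimal.

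For arbitrary hyperbolic $\phi$, the image of $\phi$ in $\mathrm{SL}(2,\Z_2) \cong S_3$ has some order $k \in \{1,2,3,6\}$, and $\phi^k \equiv I \pmod 2$. The $k$--fold cyclic cover $M_{\phi^k} \to M_\phi$ (unwrapping the $S^1$ base) is a once-punctured torus bundle whose $LR$-word is the concatenation of $k$ copies of the $LR$-word of $\phi$; its monodromy triangulation has $kn$ tetrahedra, and the previous case gives $c(M_{\phi^k}) = kn$. Since any ideal triangulation of $M_\phi$ with $t$ tetrahedra lifts to an ideal triangulation of $M_{\phi^k}$ with $kt$ tetrahedra, $c(M_{\phi^k}) \leq k \cdot c(M_\phi)$. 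Combining gives $c(M_\phi) \geq n$, matching the upper bound.

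The main obstacle is in the case $\phi \equiv I \pmod 2$: establishing the bijection between normal quadrilateral types and non-zero $\Z_2$-homology classes consistently across all $n$ tetrahedra (using the flip structure of the monodromy triangulation) and then verifying that each $S_c$ is genuinely embedded, incompressible, and realises the $\Z_2$--norm in its class. Showing $\Z_2$--tautness will likely require the combinatorial tools of \S\ref{sec:anatomy} on low-degree edges and layered $\partial$--punctured solid tori, together with the pseudo-Anosov dynamics of $\phi$ to rule out smaller representatives.
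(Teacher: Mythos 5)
Your overall architecture coincides with the paper's: an upper bound from the triangulation itself, a reduction via cyclic covers to the case where the monodromy is trivial mod $2$ (the paper phrases this as $H_1(\manifold,\Z_2)$ having rank $3$, see \S\ref{subsec:Reduction using coverings}; note also that $SL(2,\Z_2)\cong\Sym(3)$ has no element of order $6$, so $k\in\{1,2,3\}$), and then an application of the equality case of Theorem~\ref{thm:sumofnorms} to a rank--$2$ subgroup whose three non-zero classes are represented by quadrilateral surfaces, with the Euler characteristic counted by horizontal quadrilaterals. The covering inequality $c(M_{\phi^k})\le k\cdot c(M_\phi)$ is exactly the paper's closing step.

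However, there is a genuine gap at the step you yourself flag as ``the main obstacle,'' and it is not a technicality: it carries the entire weight of the lower bound. Constructing a suspension surface $S_c$ with one quadrilateral per tetrahedron and computing $-\chi(S_c)=\#\{\text{horizontal quads}\}$ only yields $||\,c\,||\le \chi_{-}(S_c)$, which is the \emph{wrong direction}: summing gives $\sum_{0\ne c\in H}||\,c\,||\le n$, and Theorem~\ref{thm:sumofnorms} then provides a lower bound of $\sum||\,c\,||$ on an arbitrary triangulation, which proves nothing unless that sum actually equals $n$. So you must show each $S_c$ is $\Z_2$--taut, i.e.\ genuinely norm-minimising in its class --- and for one-sided surfaces representing $\Z_2$--classes this is delicate and is not delivered by ``$\manifold$ contains no essential sphere, torus, projective plane or Klein bottle.'' The paper avoids proving tautness of a constructed surface by reversing the logic in Lemma~\ref{lem:chiofS}: it starts from a taut, least-weight normal representative (which exists abstractly, so $||\,c\,||=-\chi$ is an input rather than a conclusion), and shows by a fibre-by-fibre analysis --- ruling out vertex-linking and parallel curves via annular compressions, and tracking how a single non-separating curve propagates through each layered tetrahedron --- that any such representative meets every tetrahedron in exactly one quadrilateral; a Klein four-group parity argument (Corollary~\ref{cor:rank2minimalbundle}) then forces the three quad types to be distinct in each tetrahedron, so exactly one of the three surfaces contributes a horizontal quad per tetrahedron and $\sum -\chi(S_i)=|\tri|$. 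Even this direction still requires real geometric input: the proof of Lemma~\ref{lem:chiofS} invokes a Rubinstein-style uniqueness and incompressibility argument for surfaces in the product of a once-punctured torus with an interval, obtained by cutting along a fibre through a horizontal quadrilateral. Until you supply an argument of comparable strength for the tautness of your suspended surfaces, the proof is incomplete.
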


Monodromy ideal triangulations of hyperbolic once-punctured torus bundles are described by Floyd and Hatcher~\cite{Floyd-incompressible-1982} and attributed to Thurston. These triangulations are canonically determined by the monodromy of the bundle acting on the Farey graph; they are an early special case of Agol's veering triangulations. The fact that the monodromy ideal triangulation of each hyperbolic once-punctured torus bundle is canonical, and hence also geometric, is due to Akiyoshi~\cite{Akiyoshi99CanonicityLayeredTorusBundles}, Lackenby~\cite{Lackenby-canonical-2003} and Gu\'eritaud \cite{GueritaudPhDThesis}.

Such a triangulation may contain edges of arbitrarily high degree, hence arbitrarily many ideal hyperbolic tetrahedra of arbitrarily small volume, a fact which is also highlighted by explicit upper bounds on the volume of torus bundles in \cite[Theorem B.1]{Gueritaud-canonical-2006} and in \cite[Corollary 2.4]{Agol03SmallLargeGenus}. Thus, our family contains explicit minimal triangulations where the gap between complexity and volume can be arbitrarily large. 

The monodromy ideal triangulation $\tri$ of a hyperbolic once-punctured torus bundle $\manifold$ with the property that $H_2(\manifold,\Z)$ contains a Klein 4--group $H$ satisfies the equality
$$|\tri| = \sum \limits_{0 \neq c \in H} || \ c\ ||.$$
There are other once-cusped hyperbolic 3--manifolds, for which this equality is achieved. These are not fibred, and are described in \S\ref{subsec:more examples}.
It remains an open problem to \emph{classify all triangulations} for which the lower bound in Theorem~\ref{thm:sumofnorms} is achieved. Such a classification is given in \cite{Jaco-Z2-2017} for our lower bound on the complexity for closed atoroidal 3--manifolds.

\subsection{Other classes of hyperbolic 3--manifolds}

We remark that for closed hyperbolic 3--manifolds, currently no infinite families of minimal triangulations are known. In the case of compact manifolds with non-empty boundary, lower bounds on complexity are known through work of \cite{Bucher-simplicial-2015} and \cite{Jaco-bounds-2016}, where it is independently shown that they are attained by $S_g\times[0,1],$ where $S_g$ is a closed orientable surface of genus $g\ge 1.$

There are infinite families of minimal triangulations of manifolds with a totally geodesic boundary component of genus $g\ge 2$ and some cusps due to Frigerio, Martelli and Petronio~\cite{Frigerio-Dehn-2003}. These triangulations can be thought of as subdivisions into partially truncated tetrahedra; or alternatively as ideal triangulations with one ideal vertex having link a surface of genus $g$ and all others having link a torus. We remark that our methods and results do not generalise to this setting of higher genus vertex links. The main reason is that there is less control over the anatomy of these triangulations (cf. Remark~\ref{rem:fmp}).


\textbf{Acknowledgements.}
The authors thank the anonymous referee for an excellent job, pointing out many additional sources and inaccuracies in our original arguments. Their comments helped us to significantly improve this article.

This research was supported through the programme ``Research in Pairs'' by the Mathematisches Forschungsinstitut Oberwolfach in 2017. The authors would like to thank the staff at MFO for an excellent collaboration environment.
Jaco is partially supported by NSF grant DMS-1308767 and the Grayce B. Kerr Foundation.
Spreer is partially supported by the Einstein Foundation (project ``Einstein Visiting Fellow Santos''). 
Research of Rubinstein and Tillmann is supported in part under the Australian Research Council's Discovery funding scheme (project number DP160104502). Tillmann thanks the DFG Collaborative Center SFB/TRR 109 at TU Berlin, where parts of this work have been carried out, for its hospitality.


\section{The anatomy of a minimal ideal triangulation}
\label{sec:anatomy}


\subsection{Triangulations and pseudo-manifolds}
\label{sec:Triangulations}

Let $\widetilde{\Delta}$ be a finite union of pairwise disjoint Euclidean 3--simplices with the standard simplicial structure. Every $k$--simplex $\tau$ in $\widetilde{\Delta}$ is contained in a unique 3--simplex $\sigma_\tau.$ A 2--simplex in $\widetilde{\Delta}$ is termed a \emph{face}.

Let $\Phi$ be a family of affine isomorphisms pairing the faces in $\widetilde{\Delta},$ with the properties that $\varphi \in \Phi$ if and only if $\varphi^{-1}\in \Phi,$ and every face is the domain of a unique element of $\Phi.$ The elements of $\Phi$ are termed \emph{face pairings}.

Consider the quotient space $\widehat{\manifold} = \widetilde{\Delta}/\Phi$ with the quotient topology, and denote the quotient map $p\co \widetilde{\Delta} \to \widehat{\manifold}.$ We make the additional assumption on $\Phi$ that for every $k$--simplex $\tau$ in $\widetilde{\Delta}$ the restriction of $p$ to the interior of $\tau$ is injective. Then
the set of non-manifold points of $\widehat{\manifold}$ is contained in the 0--skeleton, and in this case $\widehat{\manifold}$ is called a \emph{closed 3--dimensional pseudo-manifold.} (See Seifert-Threfall~\cite{Seifert-textbook-1980}.) We also always assume that $\widehat{\manifold}$ is connected.
The triple $\tri = ( \widetilde{\Delta}, \Phi, p)$ is called a \emph{(singular) triangulation} of $\widehat{\manifold}.$

Let $\widehat{\manifold}^{(k)}= \{ p(\tau^k) \mid \tau^k \subseteq \widetilde{\Delta}\}$ denote the set of images of the $k$--simplices 
of $\widetilde{\Delta}$ under the projection map. Then the elements of $\widehat{\manifold}^{(k)}$ are precisely the equivalence classes of $k$--simplices in $\widetilde{\Delta}.$
The elements of $\widehat{\manifold}^{(0)}$ are termed the \emph{vertices} of $\widehat{\manifold}$ and the elements of $\widehat{\manifold}^{(1)}$ are the \emph{edges}. 
The triangulation is a \emph{$k$--vertex triangulation} if $\widehat{\manifold}^{(0)}$ has size $k.$ 


\subsection{Edge paths and edge loops}
\label{sec:Edge paths and edge loops}

To each edge $e$ in $\widehat{\manifold}^{(1)}$ we associate a continuous path $\gamma_e \co [0,1]\to \widehat{\manifold}$ with endpoints in $\widehat{\manifold}^{(0)}$ via the composition of maps $[0,1] \to \widetilde{\Delta} \to \widehat{\manifold},$ where the first map is an affine map onto a 1--simplex in the equivalence class and the second is the quotient map. We call $\gamma_e$ an \emph{edge path}. An edge path is termed an \emph{edge loop} if its ends coincide. For instance, in a 1--vertex triangulation every edge path is an edge loop.

We often abbreviate ``edge path" or ``edge loop" to ``edge", and we denote the edge path $\gamma_e$ with reversed orientation by $-\gamma_e.$ The notions of interest in this paper are independent of the parametrisation.


\subsection{Ideal triangulations}
\label{sec:Ideal triangulations}

Let $\manifold = \widehat{\manifold}\setminus \widehat{\manifold}^{(0)}.$ Then $\manifold$ is a topologically finite, non-compact 3--manifold, and the pseudo-manifold $\widehat{\manifold}$ is referred to as the \emph{end-compactification} of $\manifold ,$ the elements of $\widehat{\manifold}^{(0)}$ as the \emph{ideal vertices} of $\manifold$ and the elements of $\widehat{\manifold}^{(1)}$ as the \emph{ideal edges} of $\manifold .$ 
We also refer to the triangulation $\tri = ( \widetilde{\Delta}, \Phi, p)$ of $\widehat{\manifold}$ as an \emph{ideal (singular) triangulation} of $\manifold .$

The adjective \emph{singular} is usually omitted: we do not need to distinguish between simplicial and singular triangulations. If $\widehat{\manifold}$ is a closed 3--manifold, we may write $M = \widehat{\manifold},$ and hope this does not cause any confusion. We suppress the notation $\tri = ( \widetilde{\Delta}, \Phi, p)$ and simply say that $\widehat{\manifold}$ is triangulated or $\manifold = \widehat{\manifold}\setminus \widehat{\manifold}^{(0)}$ is ideally triangulated.


The following result is implicit in Matveev \cite{Matveev-algorithmic-2007} and shows that every topologically finite 3--manifold arises in this way from a closed pseudo-manifold. A stronger version for compact orientable irreducible an-annular 3--manifolds with $\partial \manifold$ a non-empty collection of incompressible tori, avoiding the use of spines can be found in \cite[Theorem 1]{Lackenby00TautIdeal}. 

\begin{proposition}[Topologically finite manifolds have ideal triangulations]\label{pro:ideal triangulation exists}
If $\manifold$ is the interior of a compact 3--manifold $\overline{\manifold}$ with non-empty boundary, then $\manifold$ admits an ideal triangulation. The ideal vertices of the ideal triangulation are in one-to-one correspondence with the boundary components of $\overline{\manifold}.$
\end{proposition}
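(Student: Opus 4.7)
My approach will follow Matveev's duality between special spines of compact $3$--manifolds with boundary and ideal triangulations of their interiors. The plan has three main steps: first construct a spine of $\overline{\manifold}$ from a simplicial triangulation, then modify it into a \emph{special} spine, and finally dualise to obtain the ideal triangulation, being careful to count the ideal vertices.

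First, by Moise's theorem $\overline{\manifold}$ admits a simplicial triangulation $K$ in which $\partial \overline{\manifold}$ is a subcomplex. Taking the dual cell decomposition (after one barycentric subdivision if needed), each $3$--simplex of $K$ contributes a dual vertex, each interior $2$--face a dual edge, and so on. Let $P$ be the union of those dual $2$--cells whose interiors lie in the interior of $\overline{\manifold}$. Standard regular-neighbourhood arguments show that $\overline{\manifold} \setminus P$ deformation retracts onto $\partial \overline{\manifold}$, so the connected components of this complement are in bijection with the boundary components of $\overline{\manifold}$.

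Next, I would verify that $P$ is a \emph{simple} polyhedron: every point has a neighbourhood of one of three standard types (non-singular, triple line, or ``true vertex'' where four sheets meet as in the cone on the $1$--skeleton of a tetrahedron). To obtain a \emph{special} spine, one may collapse free faces and perform local moves on $P$ that preserve the homotopy type of the pair $(\overline{\manifold}, \partial \overline{\manifold})$, and in particular preserve the bijection between complementary components and boundary components. This is Matveev's construction and it produces a special spine with the desired complementary structure.

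Finally, I would dualise $P$: each true vertex becomes an ideal tetrahedron, each triple edge induces a face pairing between two triangular faces, and each $2$--region becomes an edge in the quotient. The quotient space is a closed pseudo-manifold $\widehat{\manifold}$ in the sense of \S\ref{sec:Triangulations}, whose ideal vertices correspond to the components of $\widehat{\manifold}$ minus (the dual image of) $P$, and hence to the boundary components of $\overline{\manifold}$. Removing these vertices gives back $\manifold$ with the required ideal triangulation. The principal obstacle is the second step: promoting a simple spine to a special one while preserving the correct count of complementary regions. Carrying this out from first principles requires Matveev's moves and is why invoking his theory is more efficient than attempting a purely ad-hoc simplicial construction.
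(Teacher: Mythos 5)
Your proposal is correct and follows essentially the same route as the paper: the paper's proof simply cites Matveev's Theorem 1.1.13 (Casler's existence of a special spine of $\overline{\manifold}$) and Theorem 1.1.26 (duality between special spines and ideal triangulations, with ideal vertices corresponding to boundary components). You have merely unpacked the proof of Casler's theorem (dual $2$--skeleton of a triangulation, then promotion to a special spine) that the paper invokes as a black box, so the two arguments coincide in substance.
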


\begin{proof}
The statement is implied by the following results in \cite{Matveev-algorithmic-2007}. Theorem 1.1.13 due to Casler asserts that $\overline{\manifold}$ possesses a special spine $\Sigma;$ it has the property that $\overline{\manifold}$ is homeomorphic to a regular neighbourhood of $\Sigma$ in $\manifold .$ Theorem 1.1.26 implies that $\Sigma$ is dual to an ideal triangulation of $\manifold$ with the property that the ideal vertices are in one-to-one correspondence with the boundary components of $\overline{\manifold}.$
\end{proof}


\subsection{Layered solid tori}
\label{sec:lst}

A {\em layered solid torus} $\lst$ is a triangulation of a solid torus iteratively constructed in the following way: Start with a one-triangle M\"obius strip as shown in Figure~\ref{fig:faces}(c). Then attach additional tetrahedra without a twist along pairs of (unpaired) faces of the existing complex sharing an edge $e$. This operation is called a {\em layering} on $e$, where the first layering onto the M\"obius strip must always go on the interior edge of the M\"obius strip, see Figure~\ref{fig:layering}. Layered solid tori are close cousins of the monodromy ideal triangulations of once-punctured torus bundles defined in Section~\ref{sec:mit}, since both types of triangulations are obtained from layering tetrahedra onto an existing complex. See \cite{Jaco-layered-2006} for a comprehensive introduction into layered solid tori and, more generally, layered triangulations. Here, we summarise material of \cite[Section 2.3]{Jaco-minimal-2009}.

\begin{figure}[htb]
  \centering{\includegraphics[width=6cm]{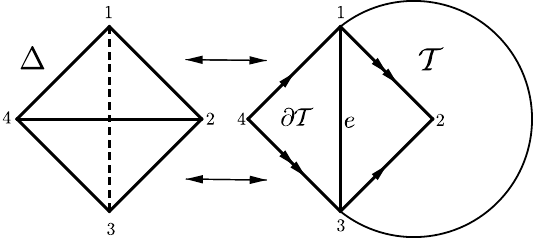}}
    \caption{Layering tetrahedron $\Delta$ to triangulation $\tri$ on edge $e$. Identifications of the triangles are indicated by the vertex labels. \label{fig:layering}}
\end{figure}

By construction, the boundary of a layered solid torus containing at least one tetrahedron is always the standard triangulation of a torus with one vertex, two triangles, and three edges. Layered solid tori are distinguished by how often their meridional disc intersects their three boundary edges geometrically and we write $\lst (a,b,c)$ for a layered solid torus with meridional disc intersecting the boundary edges $a$, $b$ and $c$ times respectively. Assuming $a\leq b\leq c$ it is straightforward to see that $a+b = c$ and that a layered solid torus $\lst (a,b,c)$ can be turned into a layered solid torus of type $\lst (a,b,b-a)$, $\lst (a,c,a+c)$ or $\lst (b,c,b+c)$ by layering an additional tetrahedron on the edge with label $c$, $b$, or $a$ respectively.  

There exists only one layered solid torus with one tetrahedron, $\lst(1,2,3)$. Hence, by construction, every layered solid torus with at least one tetrahedron contains this layered solid torus, termed its {\em core tetrahedron}.

Let $\lst$ be a layered solid torus occurring as a subcomplex of some triangulation $\tri$ of a $3$-manifold. We say that $\lst$ is \emph{maximal with respect to $\tri$} if it is not strictly contained in any other layered solid torus subcomplex in $\tri.$ By a \emph{layered $\partial$--punctured solid torus}, we mean a \emph{layered solid torus} with its vertex removed. Whenever we talk about the punctured version of a layered solid torus $\lst (a,b,c)$ we denote it by $\lst^{\star} (a,b,c)$ to emphasise the difference. The notion of a layered $\partial$--punctured solid torus is very convenient when discussing the combinatorial structure of an ideal triangulation. For instance, they play a role in \cite{Gueritaud-canonical-2010}.


\subsection{Properties of minimal triangulations}

An ideal triangulation of a topologically finite 3--manifold $\manifold$ is \emph{minimal} if it uses the smallest number of simplices in an ideal triangulation of $\manifold .$ The number of ideal simplices in a minimal ideal triangulation of $\manifold$ is denoted $c(\manifold)$ and termed the \emph{complexity} of $\manifold .$

We assume that the reader is familiar with normal surface theory. We refer to \cite{Tillmann-normal-2008} for a thorough introduction, but remark that we only need to study \emph{closed} normal surfaces in ideal triangulations.

An ideal triangulation of $\manifold$ is \emph{0--efficient} if no normal surface is a 2--sphere.  An ideal triangulation of $\manifold$ is \emph{$\partial$--efficient} if it has the property that any closed normal surface that is isotopic to a vertex linking surface is \emph{normally isotopic} to that vertex-linking surface. A normal isotopy is an isotopy of $\manifold$ that preserves each simplex of each dimension. 

\begin{theorem}\label{lem:bdEfficient} 
Suppose $\manifold$ is the interior of a compact, irreducible, $\partial$--irreducible anannular 3--manifold. Then each minimal ideal triangulation of $\manifold$ is 0--efficient and $\partial$--efficient.
\end{theorem}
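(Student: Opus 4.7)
The plan is to treat both assertions by contradiction via a crushing-type reduction on normal surfaces. Assume $\tri$ is a minimal ideal triangulation of $\manifold$ and suppose it fails one of the efficiency conditions.

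First, for 0--efficiency, I would suppose $\tri$ carries a normal 2--sphere $S$. Irreducibility of $\overline{\manifold}$ (hence of $\manifold$) forces $S$ to bound an embedded 3--ball $B$ in $\manifold$. Since the vertex links in $\manifold$ are tori, $S$ cannot be composed entirely of vertex-linking triangles, so $S$ carries at least one quadrilateral disc. I would then apply the Jaco--Rubinstein crushing operation along $S$: collapse each tetrahedron meeting $S$ in a quadrilateral to a lower dimensional cell and re-identify the remaining faces. The resulting ideal triangulation has strictly fewer tetrahedra than $\tri$; moreover, because $B$ is a 3--ball, the crushed object still triangulates $\manifold$ itself. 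This contradicts the minimality of $\tri$.

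Next, for $\partial$--efficiency, suppose $\tri$ carries a closed normal surface $S$ isotopic to a vertex-linking torus $L_v$ but not normally isotopic to $L_v$. By the 0--efficiency just established, no component of $S$ is a sphere, so $S$ is a torus; then irreducibility of $\overline{\manifold}$ together with incompressibility of $L_v$ makes the region $R$ cobounded by $L_v$ and $S$ a product $T^2\times [0,1]$. If $S$ consisted only of triangle discs, it would coincide with a vertex link, and since $S$ is isotopic to $L_v$ that vertex would have to be $v$, giving $S=L_v$ normally -- contrary to hypothesis. Therefore $S$ carries at least one quadrilateral disc, and a crushing-type reduction along $S$ inside the product region $R$ removes at least one tetrahedron while preserving the ideal structure. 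Here the $\partial$--irreducibility and anannularity of $\overline{\manifold}$ ensure the crushed object is still an ideal triangulation of $\manifold$ with the same ideal vertices, again contradicting minimality.

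The main obstacle is making the crushing operation rigorous in the ideal-triangulation setting. One must verify that (i) the crushed object is still an ideal triangulation of $\manifold$ (preserving all ideal vertices, and not introducing new singularities), and (ii) the tetrahedron count strictly decreases. Both points rely essentially on the topological hypotheses on $\overline{\manifold}$: irreducibility handles (i) in the 0--efficient case by guaranteeing that the crushed ball is filled back trivially, while $\partial$--irreducibility and anannularity handle (i) in the $\partial$--efficient case by ruling out the degenerate configurations in which crushing could either destroy a cusp, produce a compressible or annular boundary, or merely permute tetrahedra without reducing their number.
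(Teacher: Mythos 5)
Your proposal takes essentially the same route as the paper: the paper's proof consists of citing Jaco--Rubinstein (Corollary 7.3 of the 0--efficiency paper for the first claim, Theorem 4.7 of the annular-efficiency paper for the second) together with the observation that crushing strictly reduces the number of tetrahedra, and your sketch is precisely a reconstruction of the crushing arguments underlying those citations. The verification issues you flag at the end --- that crushing along the normal sphere or the boundary-parallel torus preserves $\manifold$ and its ideal vertices while strictly dropping the tetrahedron count --- are exactly the content of the cited theorems, so the clean way to close the argument is to invoke them as the paper does rather than re-derive the crushing machinery.
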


\begin{proof}
It is shown in \cite[Corollary 7.3]{Jaco-0-efficient-2003} that each minimal triangulation of $\manifold$ is 0--efficient. 
In \cite[Theorem 4.7]{Jaco-annular-2011}, the existence of a $\partial$--efficient triangulation is proven by a crushing method. Since crushing reduces the number of tetrahedra, this means that a triangulation that fails to be $\partial$--efficient is not minimal.
\end{proof}

We now give a definition of what we mean by an ideal edge to be homotopic or isotopic \emph{into the boundary}. The key is that intermediate paths in the homotopy are not allowed to pass through the ideal vertices, except for their endpoints. The reader is reminded than an ideal edge of $\manifold$ is an edge of $\widehat{\manifold}.$ An expanded discussion of the following material can be found in \cite{Hodgson-triangulations-2015}.

A path homotopy (resp. isotopy)\footnote{i.e. a homotopy (resp. isotopy) keeping endpoints fixed}
 $H\co [0,1] \times [0,1]\to \widehat{\manifold}$ between two edge paths is \emph{admissible} if $H(\; (0,1)\times [0,1]\;) \subset \manifold,$ where the first factor parametrises the paths. The edge path $\gamma$ is \emph{admissibly null-homotopic (resp. null-isotopic)} if there is a path homotopy (resp. isotopy) $H\co [0,1] \times [0,1]\to \widehat{\manifold}$ with $H(x, 0) = \gamma(x),$ $H(x, 1) = \gamma(0)$ for all $x\in [0,1]$ and $H(\; (0,1)\times [0,1)\;) \subset \manifold.$

We say that an ideal edge $e$ of $\manifold$ is \emph{homotopic (resp. isotopic) into the boundary} if it is admissibly null-homotopic (resp. null-isotopic) in $\widehat{\manifold}.$

\begin{corollary}\label{cor-min-bddry-eff}
Suppose $\manifold$ is the interior of a compact, irreducible, $\partial$--irreducible anannular 3--manifold other than a 3--ball, and $\tri$ is a minimal ideal triangulation of $\manifold .$ Then no ideal edge is isotopic into the boundary. In particular, no edge in $\widehat{\manifold}$ bounds an embedded disc in $\widehat{\manifold}.$
\end{corollary}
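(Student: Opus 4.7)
By Theorem~\ref{lem:bdEfficient}, the minimal ideal triangulation $\tri$ is both $0$--efficient and $\partial$--efficient. Suppose for contradiction some ideal edge $e$ is admissibly null-isotopic in $\widehat{\manifold}$. Because admissible null-isotopies fix the endpoints of $e$, both endpoints coincide at a single ideal vertex $v$, so $e$ is an edge loop. Passing to the compact model $\overline{\manifold}$, the hypothesis becomes: the properly embedded arc $e$ is isotopic rel endpoints into the boundary component $\Sigma$ associated with $v$, and so $e$ cobounds an embedded disc $D \subset \overline{\manifold}$ with an arc $\alpha \subset \Sigma$.

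The strategy is to exploit $D$ to construct an embedded closed surface $S \subset \manifold$ that is isotopic to the vertex link $L_v$ but not normally isotopic to it, contradicting $\partial$--efficiency. First push $D$ slightly away from $\Sigma$ so that $\alpha$ becomes an arc $\alpha'$ on $L_v$ and $D$ becomes a disc $D'$ in $\manifold$ with $\partial D' = e^{\operatorname{out}} \cup \alpha'$, where $e^{\operatorname{out}} = e \setminus \nhd(v)$. Attach a tube to $L_v$ along $e^{\operatorname{out}}$ to obtain an intermediate surface $S_0$ of genus one greater than $L_v$, and then compress $S_0$ along $D'$ to obtain a surface $S$ of the same genus as $L_v$. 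Both operations combine into an ambient isotopy in $\manifold$ that sweeps a small annular neighbourhood of $L_v \cap e$ across $D$, so $S$ is isotopic to $L_v$ in $\manifold$. Normalise $S$ within $\tri$: by $0$--efficiency no essential sphere components are introduced, so the resulting normal surface $S'$ is still isotopic to $L_v$; by $\partial$--efficiency $S'$ must be normally isotopic to $L_v$.

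The contradiction is extracted by showing $S$ (or its normal form $S'$) carries a strictly larger weight along $e$ than $L_v$ does, a property that persists through the normalisation moves. Tracking this weight carefully, using that the tube attachment adds normal discs in the tetrahedra cyclically around $e$ while the compression can only cancel those meeting $D'$, is the principal technical obstacle. An alternative route that avoids this direct weight analysis is to apply the crushing machinery of~\cite{Jaco-annular-2011}: the disc $D$, or a normal representative in its isotopy class, encodes a boundary-compression of $L_v$, and the crushed triangulation yields an ideal triangulation of $\manifold$ with strictly fewer tetrahedra, contradicting minimality of $\tri$.

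For the ``In particular'' clause, suppose instead that some edge $e$ in $\widehat{\manifold}$ bounds an embedded disc $D \subset \widehat{\manifold}$. For each ideal vertex $u$ with $u \in \operatorname{int}(D)$, the intersection $D \cap L_u$ with the vertex link of $u$ is a disjoint union of simple closed curves, since $\partial D = e$ is disjoint from $L_u$. Applying an innermost-disc argument on $D$, together with the $\partial$--irreducibility and irreducibility of $\overline{\manifold}$, each such curve bounds a disc on $L_u$ and the corresponding innermost subdisc of $D$ can be removed by an isotopy of $D$ rel $\partial D$ across a ball in $\widehat{\manifold}$. Since $\overline{\manifold}$ is not a $3$--ball, no global obstruction arises, and after finitely many such modifications $\operatorname{int}(D) \subset \manifold$. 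The resulting disc witnesses $e$ as admissibly null-isotopic, contradicting the first part of the statement.
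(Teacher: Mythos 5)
Your construction of the surface is essentially the one in the paper: the tube along $e$ followed by the compression along $D'$ produces exactly the component of $\partial\nhd(L_v\cup D)$ other than the vertex link itself, a torus isotopic to $L_v$. But the argument does not close, and the step you leave open is stated with the inequality reversed. The decisive property of your surface $S$ is that it is \emph{disjoint} from $e$: the tube swallows the two points of $L_v\cap e$, and $D'$ is pushed off $e$, so $S$ has weight \emph{zero} on $e$ --- strictly \emph{smaller} than the weight of $L_v$ on $e$, not ``strictly larger'' as you claim. This is not a technicality one can defer: the contradiction with $\partial$--efficiency comes precisely from the fact that normalising an incompressible surface in an irreducible manifold is achieved by an isotopy that does not increase weight, so the normal form of $S$ still misses $e$, whereas every surface normally isotopic to $L_v$ must meet $e$ (since $e$ is incident to the vertex $v$). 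You flag exactly this step as ``the principal technical obstacle'' and do not resolve it, and with the comparison pointing the wrong way it could not be resolved as written; the fallback via the crushing machinery of \cite{Jaco-annular-2011} is likewise only a gesture. Note also that the conclusion one actually extracts from $\partial$--efficiency here is that $\manifold$ would have to be a product $S\times(0,1)$, and it is at that point that the hypotheses ``irreducible, $\partial$--irreducible, anannular, other than a 3--ball'' are consumed; your write-up never uses them in the main argument.

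For the ``in particular'' clause the paper treats the implication as immediate; your innermost-disc argument for pushing the interior of the disc off the remaining ideal vertices is a reasonable elaboration, although ``no global obstruction arises'' is an assertion rather than an argument, and one must still check that the modified disc meets $\widehat{\manifold}^{(0)}$ only in the basepoint so that it genuinely witnesses an admissible null-isotopy. The substantive gap is in the first part.
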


\begin{proof}
Suppose the edge $e$ is isotopic into the boundary. Choose a compact core $\core$ of $\manifold$ with the property that each boundary component of $\core$ is a vertex linking surface. 
There exists an embedded disc $D$ in $\core$ whose boundary consists of $e \cap \core$ and a subarc on a boundary component $B$ of $\core.$ The boundary of a regular neighbourhood of $B\cup D$ has two components; one is a vertex linking surface normally isotopic to $B$ and the other is a surface $S$ isotopic to $B$ and disjoint from $e.$ Since $S$ is incompressible and not a 2--sphere and $\manifold$ is irreducible, it follows that there is a normal surface isotopic but not normally isotopic to $B.$ Since the triangulation is $\partial$--efficient according to Theorem~\ref{lem:bdEfficient}, it follows that $\manifold$ is homeomorphic with $S \times (0,1).$ However, this contradicts our topological hypotheses on $\manifold .$
\end{proof}

Note that the above result does not apply to the unique minimal ideal triangulation $\tri$ of the 3--ball. This is the 1--tetrahedron, 1--vertex triangulation of $S^3$ with the vertex removed. The triangulation of $S^3$ has precisely two edges; one is a trefoil knot in $S^3$ and the other the unknot in $S^3.$ The former gives an ideal edge in $\tri$ that is only homotopic into the boundary, whilst the latter is isotopic into the boundary. The above proofs breaks down for the 3--ball because the surface $S$ in the proof shrinks to a sphere contained in a tetrahedron.

We now specialise to the class of manifolds of interest, namely cusped hyperbolic 3--manifolds, where more specific results can be shown about their minimal triangulations. In order to limit the cases to consider, we not only assume that all \emph{vertex links are of Euler characteristic zero}, but also that the manifolds are \emph{orientable}. 


\subsection{Faces in minimal ideal triangulations of cusped hyperbolic 3--manifolds}

\captionsetup[subfigure]{labelformat=empty}
\begin{figure}[h]
  \begin{center}
    \subfigure[triangle]{\includegraphics[height=2.1cm]{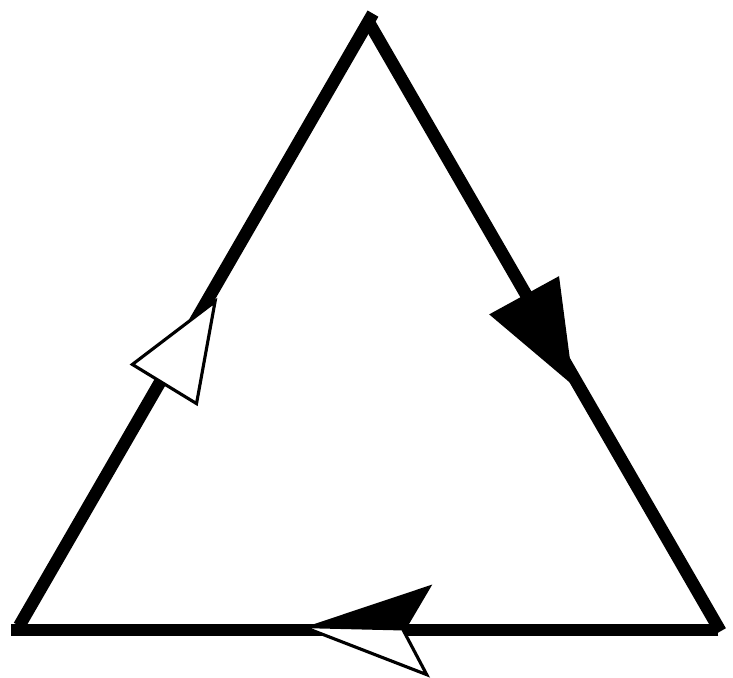}}
    \qquad
     \subfigure[cone]{\includegraphics[height=2.1cm]{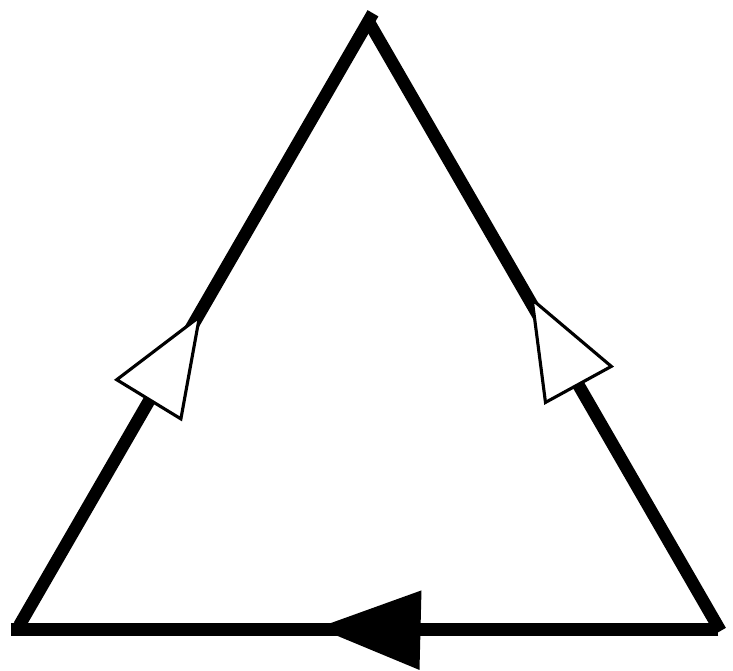}}
  \qquad
     \subfigure[M\"obius]{\includegraphics[height=2.1cm]{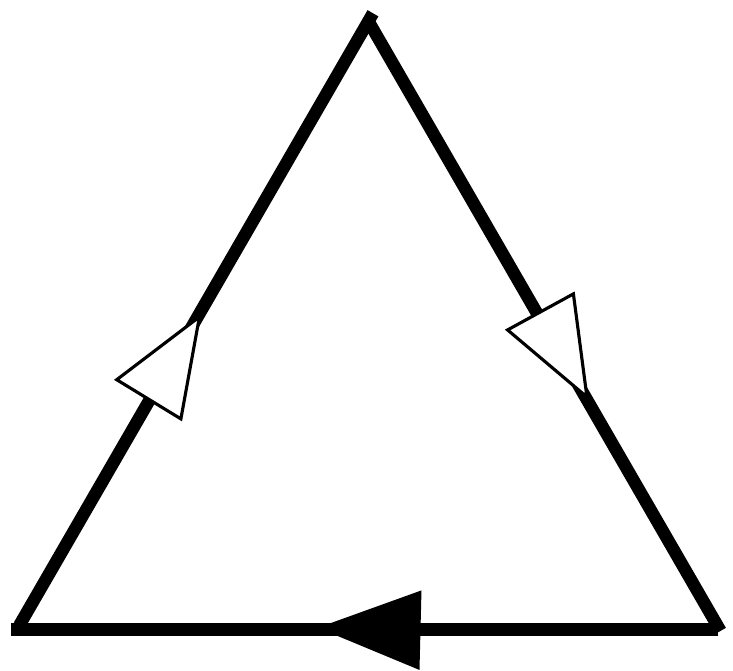}}
  \qquad
     \subfigure[3--fold]{\includegraphics[height=2.1cm]{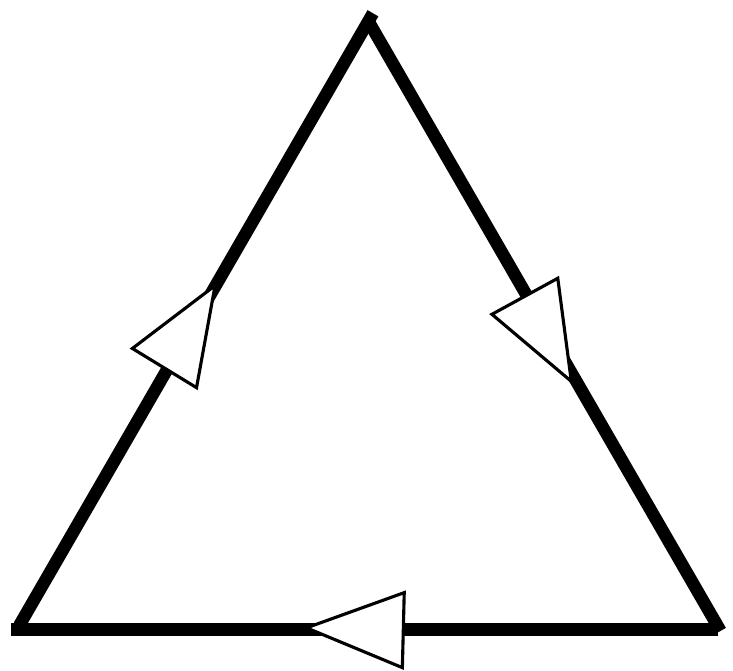}}
     \qquad
     \subfigure[dunce]{\includegraphics[height=2.1cm]{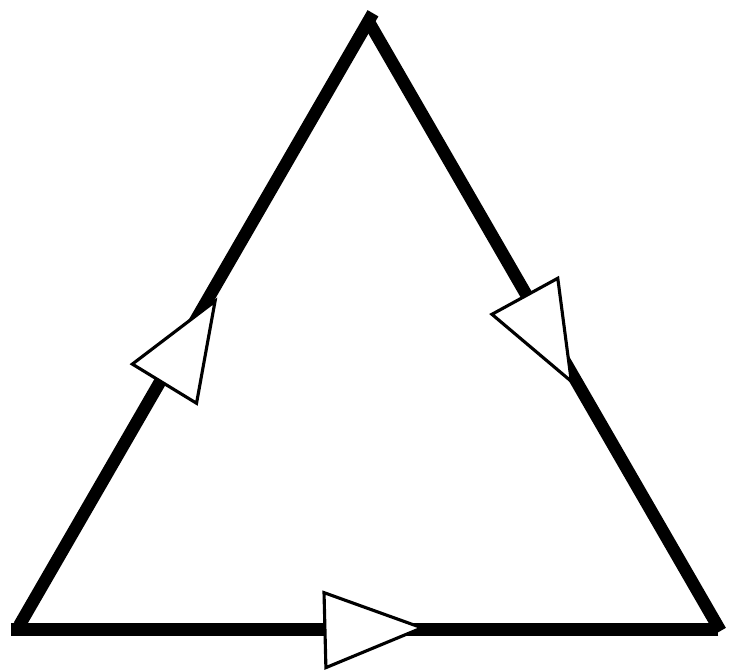}}
          \end{center}
   \caption{Types of faces. The M\"obius, 3--fold and dunce faces only have one vertex due to the edge identifications. A cone face may have one or two vertices and a triangle face may have up to 3 vertices.}
   \label{fig:faces}
\end{figure}
\captionsetup[subfigure]{labelformat=default}

An ideal triangle in an ideal triangulation of a cusped hyperbolic 3--manifold $\manifold$ may have some of its edges identified. 
There are eight possible types of triangular faces in the triangulation of the end-compactification $\widehat{\manifold},$ depending on how vertices or edges are identified (see Figure 1 in \cite{Jaco-0-efficient-2003}). Ignoring possible identifications of vertices as in \cite{Luo-combinatorial-2017}, this reduces to five types in $\manifold$ based on edge identifications only (see Figure~\ref{fig:faces}). We name the faces in $\manifold$ according to their topology in $\widehat{\manifold}.$ A face with no edge-identifications is termed a \emph{triangle face}; it is a 2--simplex, possibly with some or all of its vertices identified. If a pair of edges is identified, the face is either a cone (possibly with the tip identified with the vertex on the boundary) and called a \emph{cone face} or it is a M\"obius band and called a \emph{M\"obius face}. In a M\"obius face, we distinguish the \emph{boundary edge} and the \emph{core edge}. If all three edges are identified, the face is either a 3--fold or a dunce hat, and called a \emph{3--fold face} or \emph{dunce face} respectively.

For minimal ideal triangulations, we can prove the following statement.

\begin{theorem}
\label{thm:no3fold}
Let $\tri$ be a minimal ideal triangulation of a cusped hyperbolic 3--manifold $\manifold.$ Then there is no 3--fold face and there is no dunce face.
\end{theorem}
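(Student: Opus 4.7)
My plan is to derive a contradiction with the properties of minimal ideal triangulations established earlier in this section, in particular Corollary~\ref{cor-min-bddry-eff} (no ideal edge is isotopic into the boundary, and in particular no edge of $\widehat{\manifold}$ bounds an embedded disc) and Theorem~\ref{lem:bdEfficient} (0-efficiency and $\partial$-efficiency).

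Suppose for contradiction that $\tri$ contains a 3-fold or dunce face $F$ with identified edge $e$. Then $F$ is the image of a 2-simplex $\Delta$ whose three boundary edges all map to $e$, with the boundary loop of $\Delta$ reading $e \cdot e \cdot e^{-1}$ in the dunce case and $e^3$ in the 3-fold case.

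For the dunce case, the boundary word $e \cdot e \cdot e^{-1}$ represents the same element as $e$ in $\pi_1(\widehat{\manifold})$, so $F$ provides an immersed disc bounded by the loop $e$. Passing to the compact manifold $\overline{\manifold}$ with torus boundary and truncating $F$ near the ideal vertex of $F$, this produces a singular homotopy of $e$ (now regarded as a proper arc in $\overline{\manifold}$) into $\partial \overline{\manifold}$. Applying Dehn's lemma, I would promote this to an embedded disc in $\overline{\manifold}$ realising an isotopy of $e$ into the boundary, contradicting Corollary~\ref{cor-min-bddry-eff}.

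For the 3-fold case, the boundary word $e^3$ does not immediately null-homotope $e$, so a different tactic is needed. I would consider a suitable regular neighbourhood $N$ of $F$ in $\widehat{\manifold}$. Since the 3-fold face is a 2-complex with Euler characteristic $1$ and fundamental group $\Z/3$, a regular neighbourhood of the truncated face (working in $\overline{\manifold}$ so as to avoid the pseudo-manifold point) is homotopy equivalent to $L(3,q) \setminus B^3$ for some $q$, with boundary a 2-sphere. By 0-efficiency of $\tri$ (Theorem~\ref{lem:bdEfficient}) and irreducibility of $\manifold$, this sphere must bound a ball in $\overline{\manifold}$. However, the side of the sphere containing $N$ has $\pi_1 = \Z/3 \neq 0$ and cannot be a ball, while the opposite side being a ball would yield $\overline{\manifold} \cong L(3,q)$, contradicting the hypothesis that $\overline{\manifold}$ has non-empty torus boundary.

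The main obstacle in both cases is the pseudo-manifold structure of $\widehat{\manifold}$ at the ideal vertex on $F$, where $\widehat{\manifold}$ fails to be locally Euclidean and the standard regular neighbourhood and disc-theorem arguments do not apply directly. This is handled by truncating at the cusp to pass to the compact manifold $\overline{\manifold}$, and then invoking classical 3-manifold techniques (Dehn's lemma, the sphere theorem) in that setting. The 3-fold case is the more delicate of the two, since the topology of the regular neighbourhood must be tracked carefully through the truncation to guarantee that the resulting 2-sphere is genuinely embedded in $\manifold$.
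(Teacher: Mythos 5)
Both halves of your argument have genuine gaps, and the paper's proof goes a different way precisely to avoid them. In the dunce case, the dunce hat does show that the edge loop $e$ is null-homotopic, hence (after truncating) that the arc $e\cap\core$ is \emph{homotopic} into the boundary; but Corollary~\ref{cor-min-bddry-eff} only forbids edges that are \emph{isotopic} into the boundary, and the upgrade from homotopic to isotopic is exactly what cannot be done here. Dehn's lemma applies to a loop lying \emph{in} $\partial\overline{\manifold}$ that bounds a singular disc; your loop $\alpha\cup\beta$ (arc of $e$ plus a boundary arc) is not contained in the boundary, and an inessential properly embedded arc need not be boundary-parallel (a locally knotted arc is a counterexample). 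The paper itself flags this distinction: in the minimal ideal triangulation of the 3--ball one edge is homotopic but not isotopic into the boundary, and that triangulation does contain dunce faces.

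In the 3--fold case, the claim that a regular neighbourhood of the truncated face has boundary a 2--sphere is false. The single vertex of a 3--fold face is the ideal vertex, so the face necessarily runs into the cusp; after truncation the frontier of its regular neighbourhood in $\core$ is a properly embedded surface with boundary on the cusp torus $T_0$, not a closed surface. The paper computes this frontier explicitly (two hexagons glued along pieces of the exchange annulus around $e$) and finds a connected surface of Euler characteristic $-1$ with three boundary circles, i.e.\ a thrice-punctured sphere $\mathcal{S}$. There is no embedded $S^2$ and no $L(3,q)\setminus B^3$, so 0--efficiency and the sphere theorem give you nothing. What the paper does instead, in both cases, is analyse how the three boundary circles of $\mathcal{S}$ sit on $T_0$ (via the $\Theta$--graph, resp.\ barbell graph, that is $t\cap T_0$), conclude that $\mathcal{S}$ is boundary-parallel, and assemble from $\mathcal{S}$ and a complementary piece of $T_0$ a torus isotopic to $T_0$ but disjoint from $e$; normalising it then violates $\partial$--efficiency (Theorem~\ref{lem:bdEfficient}). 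If you want to repair your approach, the capping-off of the boundary circles of $\mathcal{S}$ by discs or annuli in $T_0$ is the step you are missing, and it is where all the case analysis lives.
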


\begin{proof}
Choose a compact core $\core$ of $\manifold$ with the property that each boundary component of $\core$ is a vertex linking surface. 
Let $t$ be a triangle of $\tri.$ 

  {\bf Case 1: $t$ is a 3--fold face.} Refer to Figure~\ref{fig:threefold}.
  Let $e$ be the edge of $t,$ $N(e)$ a small regular neighbourhood of $e$ in $\manifold$ and $N^c(e) = N(e) \cap \core.$
    The frontier of $N^c(e)$ in $\core$ is an annulus; we call this the \emph{exchange annulus around $e.$} Let $N(t)$ be a regular neighbourhood of $t$ in $\manifold .$ Then the boundary of $N^c(t) = N(t) \cap \core$ has a natural cell decomposition into two hexagons corresponding to the two sides of $t$ and three quadrilaterals that are subsurfaces of the exchange annulus around $e.$ The edges of the hexagons through the interior of $\core$ are termed the \emph{long edges}. Seen with respect to $(t\cap \core) \setminus N(e)$ the long edges of the hexagons have a natural labelling $1, 2, 3$ in cyclic order. See Figure~\ref{fig:threefold} on the right for a drawing of $N^c(t),$ in particular the two hexagons and their long edges.
     
Give $t$ a transverse orientation. This allows us to refer to one of the hexagons as the \emph{top} hexagon and to the other as the \emph{bottom} hexagon. The intersection $t \cap N^c(e)$ is a tripod times $[0,1].$ Figure~\ref{fig:threefold} on the left depicts a cross-section of the tripod times $[0,1]$ at some $x \in (0,1).$ The transverse orientation of $t$ is in the same direction at the legs of the tripod (either all clockwise or all anti-clockwise around the vertex of the tripod). It follows that the quadrilaterals join each long edge of the top hexagon to a long edge of the bottom hexagon. Moreover, on the labels this induces a 3--cycle. Thus, modulo orientation, the identification must be as shown in Figure~\ref{fig:threefold} (both on the left and on the right).
  
After collapsing the sections of the exchange annulus to the long edges of the hexagons, we obtain a connected, orientable, bounded surface $\mathcal{S}$ decomposed into two hexagons, nine edges and six vertices -- and thus of Euler characteristic $-1.$ As can be deduced from the vertex labels, the surface has three boundary components and thus must be a 3--punctured sphere. 

\begin{figure}[htb]
 \begin{center}
 \includegraphics[width=\textwidth]{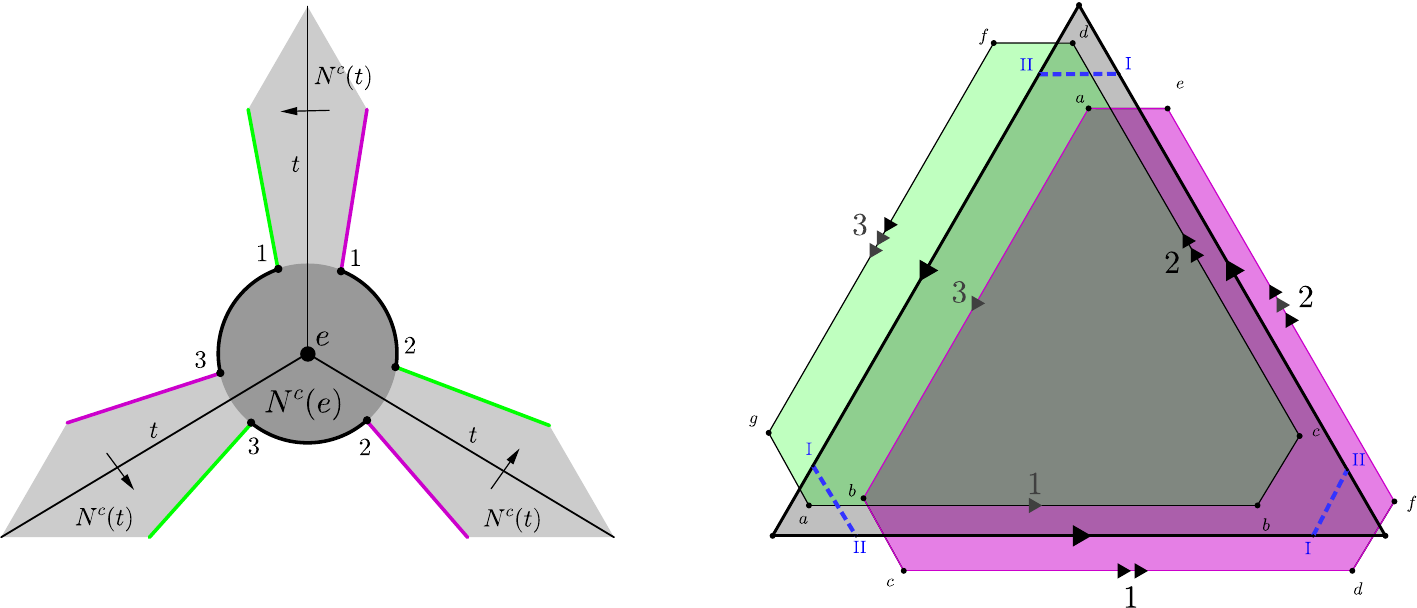}
 \caption{Left: Cross-section through the neighbourhood $N^c (t)$ of an ideal 3--fold $t$ near its boundary edge $e.$ The cross-section through the top and bottom hexagon of the right picture appear as green and purple lines respectively. Their endpoints mark the cross-section through the long edges of the respective hexagons. Labels coincide with the ones on the right. Right: Truncated boundary $N^c(t)$ of regular neighbourhood of the 3--fold face $t$ (drawn in the centre in black). Its boundary surface is connected, of Euler characteristic $-1$ with three boundary components (each containing two edges, as indicated by the edge labels) and thus a 3--punctured sphere. The dotted blue lines with labels represent the intersection of $t$ with the boundary torus $T_0$ -- a triple arc between nodes $1$ and $2,$ or the $\Theta$--graph.}
 \label{fig:threefold}
 \end{center}
\end{figure}

  Because $t$ is a 3--fold, the intersection of $t$ with a boundary torus $T_0 \subset \partial \core$ consists of two nodes and three (normal) arcs, all running between the two nodes, also known as the {\em $\Theta$--graph}, see the dotted arcs in Figure~\ref{fig:threefold}. Since $\mathcal{S}$ has three boundary components meeting $T_0$ along the boundary of a neighbourhood of $t \cap T_0,$ the complement of the embedding of the $\Theta$--graph into $T_0$ must have three boundary components. 

If no two arcs form a separating (i.e., inessential) curve on $T_0,$ then the $\Theta$--graph forms a spine of $T_0.$ In particular, its complement is a disc with a single boundary component, a contradiction. Hence, two arcs must bound a disc in $T_0.$ The remaining arc now either runs parallel to the first two, or completes each of the other two to an essential curve on $T_0.$ In both cases the complement has three boundary components. Both embeddings are shown in Figure~\ref{fig:embeddings_1}. 

Assume that we are in the latter situation and the $\Theta$--graph contains an essential curve on $T_0$. Since the embedding features two parallel arcs, one of the boundary curves of $\mathcal{S}$ bounds a disc in $T_0$. Pasting in this disc and pushing it into the interior of the manifold yields a properly embedded annulus which, because $\manifold$ is hyperbolic, must be compressible or boundary parallel. If it is compressible, then $T_0$ is compressible. We conclude that $\mathcal{S}$ must be boundary parallel.

Hence assume that the $\Theta$--graph consists of three parallel arcs and is thus contained in a disc of $T_0$. This time two of the boundary curves of $\mathcal{S}$ bound discs in $T_0$ and pasting them in and pushing them into the manifold yields a properly embedded disc, and we conclude again that $\mathcal{S}$ must be boundary parallel.

In both cases, uniting $\mathcal{S}$ with the complement of $N(t \cap T_0)$ in $T_0$ thus produces a torus 
$$T_0 \setminus N(t \cap T_0) \cup \mathcal{S}$$
boundary parallel to $T_0$ and disjoint from $e$ (which is itself incident to $T_0$). It follows that normalising this boundary parallel torus cannot result in a surface normally isotopic to $T_0$ and thus $\tri$ cannot be boundary efficient. This is impossible since we assume that $\tri$ is minimal and thus boundary efficient by Theorem~\ref{lem:bdEfficient}. 

\begin{figure}[hbt]
 \begin{center}
 \includegraphics[width=\textwidth]{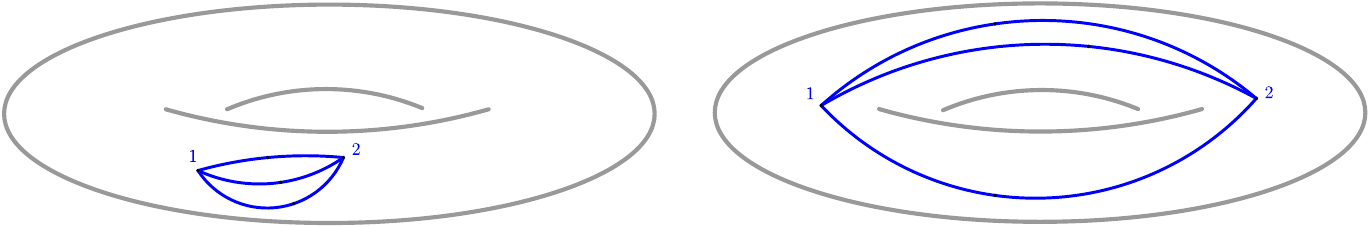}
 \caption{Two embeddings of the intersection of the 3--fold face $t$ with torus boundary component $T_0.$ Left: trivial embedding contained in a disc of $T_0;$ in this case the boundary parallel torus is made up of a once-punctured torus and two discs that are subsurfaces of $T_0$ and the 3--punctured sphere $\mathcal{S}.$ Right: two parallel essential loops; in this case the boundary parallel torus is made of a disc and an annulus that are subsurfaces of $T_0$ and the 3--punctured sphere $\mathcal{S}.$}
 \label{fig:embeddings_1}
 \end{center}
\end{figure}

{\bf Case 2: $t$ is a dunce face.} Refer to Figure~\ref{fig:duncehat}. Let $e$ be the edge of $t.$ We use the notation and set-up as in the previous case. The intersection of $t$ with $N^c(e)$ is again a tripod times an interval (see Figure~\ref{fig:duncehat} on the left for a cross-section). However, now the transverse orientations do not all agree: two are in one direction and the last is in the opposite direction. In particular, this implies that each of the hexagons has two of its long edges connected by a quadrilateral on the exchange annulus, and exactly one long edge of the top hexagon is joined to a long edge of the bottom hexagon (and these edges do not share the same label). It follows that, up to symmetry, there is only a single way to identify the long edges of the hexagons, shown in Figure~\ref{fig:duncehat} (both on the left and on the right). 

\begin{figure}[htb]
 \begin{center}
 \includegraphics[width=\textwidth]{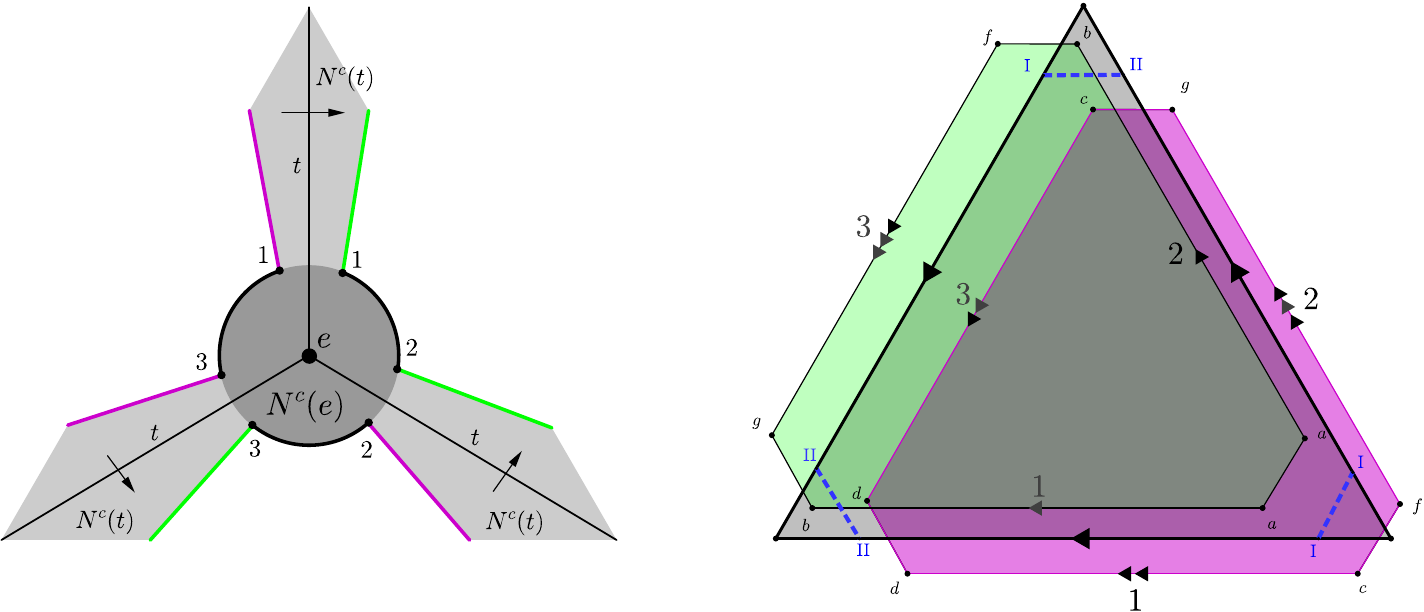}
 \caption{Left: Cross-section through the neighbourhood $N^c (t)$ of an ideal dunce hat $t$ near its boundary edge $e.$ Labels coincide with the ones on the right. Right: Truncated boundary $N^c(t)$ of regular neighbourhood of the dunce hat $t$ (drawn in the centre in black). The boundary surface is connected, of Euler characteristic $-1$ with three boundary components (one containing four edges and two containing a single edge, as indicated by the edge labels) and thus a 3--punctured sphere. The dotted lines with vertex labels represent the intersection of $t$ with the boundary torus $T_0,$ two disjoint loops and a connecting arc.}
 \label{fig:duncehat}
 \end{center}
\end{figure}

The two hexagons again form a properly embedded 3--punctured sphere $\mathcal{S} \subset \core.$ The dunce hat $t$ itself meets $\partial \core$ in two nodes and three normal arcs: two loops, one at each node, and one arc connecting the two nodes -- also known as the {\em barbell graph}. Up to the action of the mapping class group, there are four possibilities of embedding the barbell graph into a torus boundary component $T_0 \subset \partial \core,$ producing three boundary components in a regular neighbourhood of the graph: One where both loops are essential in $T_0$ and thus necessarily parallel; one where one loop is inessential and the other one is an arbitrary essential curve in $T_0;$ and two where both loops are inessential. See Figure~\ref{fig:embeddings_2} for a picture of all four cases. Using the same pasting argument as in Case 1 we can conclude that 
$\mathcal{S}$ is boundary parallel.
Again, uniting $\mathcal{S}$ with the complement of $N(t \cap T_0)$ produces a torus boundary parallel to $T_0$ and disjoint from $e.$ Thus, Theorem~\ref{lem:bdEfficient} produces a contradiction.
\end{proof}

\begin{figure}[htb]
 \begin{center}
 \includegraphics[width=\textwidth]{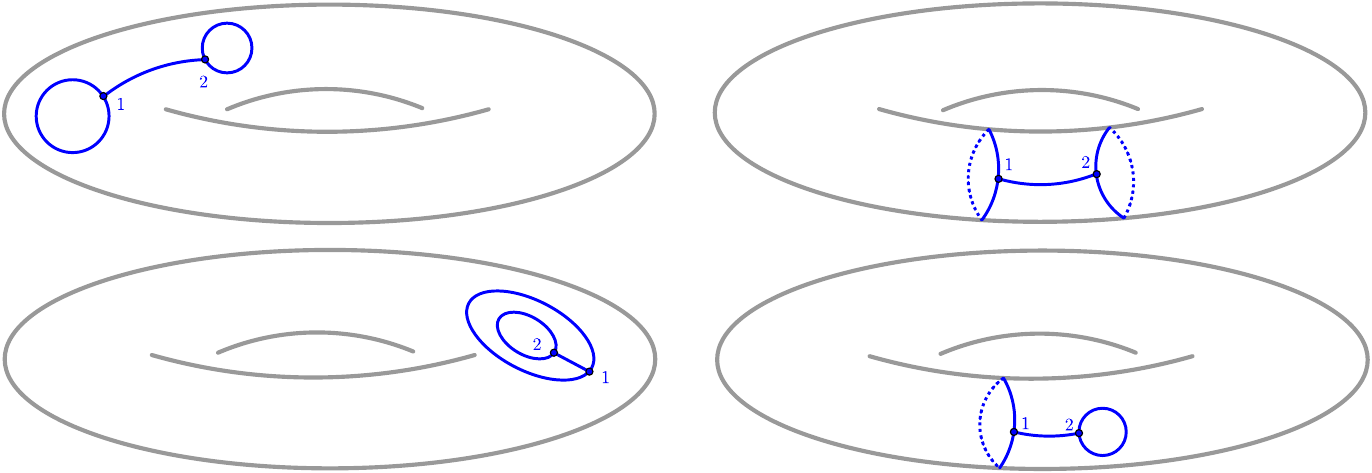}
 \caption{Four embeddings of the intersection of the dunce hat $t$ with torus boundary component $T_0.$ Top left: two inessential loops, not nested. Bottom left: two inessential loops, nested. Top right: two parallel essential loops. Bottom right: one essential loop and one inessential loop.}
 \label{fig:embeddings_2}
 \end{center}
\end{figure}

\begin{remark}
\label{rem:fmp}
The preconditions of Theorem~\ref{thm:no3fold} are necessary: the minimal triangulation of the Gieseking manifold has dunce faces, and there 
exists a minimal 2-tetrahedron triangulation of a 3--manifold with two Klein-bottle cusps, in which one of the four faces is a 3--fold face.

Moreover, Frigerio, Martelli and Petronio describe classes of orientable hyperbolic manifolds $\mathcal{M}_{g,k}$ with $k$ torus cusps and one end of type an orientable surface of genus $g$ \cite{Frigerio-Dehn-2003}. They show, that these manifolds admit minimal $(g+k)$--tetrahedra ideal triangulations if and only if $g>k,$ or $g=k$ and $g$ even, see \cite[Proposition 1.4]{Frigerio-Dehn-2003}. Such triangulations must necessarily have $3k$ cone faces, and $(2g-k)$ dunce- or 3--fold faces, see \cite[Lemma 2.1]{Frigerio-Dehn-2003}. Hence, for $g$ large enough, there exist minimal ideal triangulations of orientable hyperbolic 3--manifolds with all but an arbitrarily small portion of faces being dunce- or 3--fold-faces.
\end{remark}


\subsection{Low degree edges in minimal ideal triangulations}

We now show that there are no edges of degree one or two in a minimal triangulation of a cusped hyperbolic 3--manifold, and that edges of degree three must be contained in layered $\partial$--punctured solid tori.

\begin{lemma} 
  \label{thm:degone}
  If $\tri$ is a minimal ideal triangulation of a cusped hyperbolic 3--manifold $\manifold ,$ then $\tri$ has no edge of degree one.    
\end{lemma}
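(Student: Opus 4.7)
The plan is to suppose for contradiction that $\tri$ has an edge $e$ of degree $1$ and to produce an edge of $\widehat{\manifold}$ bounding an embedded disc, contradicting Corollary~\ref{cor-min-bddry-eff}.

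Let $e$ be represented by the $1$-simplex $[i,j]$ of the unique tetrahedron $\sigma=[i,j,k,l]$ containing it, and let $F_1=[i,j,k]$ and $F_2=[i,j,l]$ be the two faces of $\sigma$ through $e$. Since $e$ has only one preimage under $p$, no face pairing in $\tri$ may send the $1$-simplex $[i,j]$ to a different $1$-simplex. A self-pairing of a single face via a non-trivial involution is ruled out by the requirement that $p$ be injective on open simplices, so this condition forces $F_1$ to be paired with $F_2$ by a face pairing $\phi$ that preserves $[i,j]$ setwise. In particular $\phi$ sends $k$ to $l$, identifying the vertex classes $\{k\}$ and $\{l\}$ in $\widehat{\manifold}$, and yielding edge identifications so that $[i,k]$ and either $[i,l]$ (if $\phi$ fixes $i,j$ pointwise) or $[j,l]$ (if $\phi$ swaps them) lie in the same edge class, and similarly on the other pair.

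I would then observe that, in either sub-case, the remaining two faces $G_1=[i,k,l]$ and $G_2=[j,k,l]$ each have exactly two of their three boundary edges collapsed to a single edge class in $\widehat{\manifold}$: they are \emph{cone faces} in the sense of Figure~\ref{fig:faces}. Since a triangle with two adjacent sides identified is homeomorphic to a $2$-disc whose boundary circle is the remaining third side (and in whose interior the identified pair appears as a radial arc meeting the apex vertex), each $G_i$ is an embedded $2$-disc in $\widehat{\manifold}$ whose boundary is the edge $[k,l]$, now an edge loop at the identified vertex class $\{k,l\}$. Therefore $[k,l]$ bounds the embedded disc $G_1\subset\widehat{\manifold}$, contradicting Corollary~\ref{cor-min-bddry-eff}.

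The step requiring the most care is verifying that each $G_i$ truly embeds as a $2$-disc subspace of $\widehat{\manifold}$: the map $p$ is injective on the interior of each simplex, so the open $2$-cell of $G_i$ is embedded in $\widehat{\manifold}$, and the only identifications on the boundary of $G_i$ are those induced by the fold $\phi$ described above. External face pairings of $G_1$ or $G_2$ in $\tri$ may introduce further identifications of $[k,l]$ or of the slant edge with other $1$-simplices, but these only affect how those edges sit in the $1$-skeleton of $\widehat{\manifold}$ and not the subspace topology of $G_i$ itself.
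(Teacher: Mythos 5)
Your overall strategy coincides with the paper's: a degree-one edge forces the two faces containing it to be folded onto each other, and the edge opposite it then bounds a disc, contradicting Corollary~\ref{cor-min-bddry-eff}. However, the step you yourself flag as requiring the most care is where the argument has a genuine gap. You claim that further identifications of $[k,l]$ or of the slant edges, coming from face pairings elsewhere in $\tri$, ``only affect how those edges sit in the $1$--skeleton \dots and not the subspace topology of $G_i$''. This is false: if the edge class of $[k,l]$ in $\widehat{\manifold}$ happens to coincide with the edge class of $[i,k]$ (equivalently of $[i,l]$, since the fold already identifies these two), then $p$ restricted to $G_1$ also identifies the boundary edge $[k,l]$ with the radial arc, so $p(G_1)$ is a 3--fold or dunce face rather than an embedded disc, and Corollary~\ref{cor-min-bddry-eff} --- which is a statement about \emph{embedded} discs, needed to produce an isotopy into the boundary --- does not apply to it. Nothing in your argument excludes this coincidence. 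Two repairs are available: (i) invoke Theorem~\ref{thm:no3fold}, which forbids a face of a minimal ideal triangulation from having all three of its edges identified; or (ii) do what the paper's join formulation $\tilde{\Delta}_1=e_1\ast e$ implicitly does and take the disc to be $\{m\}\ast[k,l]$ with $m$ the midpoint of $[i,j]$: its interior lies in the interior of the tetrahedron together with the interior of the folded face, so it is embedded regardless of how the edges of $G_1$ are identified elsewhere in $\tri$.

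A second, smaller slip: in the sub-case where $\phi$ swaps $i$ and $j$, the induced identifications are $[i,k]\sim[j,l]$ and $[j,k]\sim[i,l]$, so neither $G_1$ nor $G_2$ has two of \emph{its own} edges identified; they are not cone faces, and your claim that the conclusion holds ``in either sub-case'' fails as stated. Fortunately this sub-case cannot occur: such a $\phi$ restricts to the orientation-reversing involution of the $1$--simplex $[i,j]$ and therefore violates the standing assumption that $p$ is injective on the interior of every simplex --- the very assumption you already used to exclude a self-pairing of $F_1$. You should eliminate this sub-case outright rather than try to absorb it into the cone-face argument.
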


\begin{proof}  
  Suppose $e_1$ is an edge of $\tri$ of degree 1. Let $\tilde{\Delta}_1$ be the single tetrahedron containing $e_1.$ Then $\tilde{\Delta}_1 = e_1\ast e$ is the join of $e_1$ and the edge $e$ opposite $e_1.$ The edge $e$ of $\tri$ bounds a disc. This contradicts Corollary~\ref{cor-min-bddry-eff}. 
\end{proof} 

\begin{lemma} 
\label{lem:degree two}
  If $\tri$ is a minimal ideal triangulation of a cusped hyperbolic 3--manifold $\manifold ,$ then $\tri$ has no edge of degree two. 
\end{lemma}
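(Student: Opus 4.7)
Plan:

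I argue by contradiction. Suppose $e$ is an edge of degree~2 in $\tri$, with (possibly coinciding) ideal endpoints $a$ and $b$, contained in tetrahedra $\Delta_1$ and $\Delta_2$. The first step is to describe the combinatorial structure of the star $B = \Delta_1 \cup \Delta_2$: the four faces of $\Delta_1, \Delta_2$ containing $e$ are paired up either by cross-identifications between $\Delta_1$ and $\Delta_2$ or by self-pairings within a single tetrahedron. In the \emph{generic} case (cross-identifications with the opposite edges $cd$ and $c'd'$ of $e$ in $\Delta_1, \Delta_2$ remaining distinct in $\tri$), the star $B$ is a topological 3--ball in $\widehat{\manifold}$ whose boundary 2--sphere meets $\widehat{\manifold}^{(0)}$ exactly in $\{a, b\}$.

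Next, passing to $\overline{\manifold}$ by removing small open horoball neighbourhoods of the ideal vertices, the boundary sphere $\partial B$ becomes a properly embedded annulus $A \subset \overline{\manifold}$ whose two boundary circles lie on the cusp tori $T_a, T_b$ and are nullhomotopic there, each bounding a link disk of the endpoints of $e$ in $B$ (since $a, b$ lie on $\partial B$ and the star $B$ is a ball, these link disks are genuinely disks on the cusp tori). I would then aim to show that $A$ is essential in $\overline{\manifold}$, contradicting the anannular hypothesis. For non-boundary-parallelism: if $a \ne b$ the boundary circles of $A$ lie on distinct cusp tori and cannot cobound a connected annulus in the disconnected $\partial \overline{\manifold}$; if $a = b$ the two nullhomotopic boundary circles of $A$ lie on $T_a$ and bound disjoint disks (the two link disks), ruling out boundary-parallelism unless the two circles happen to cobound an annulus within a single disk of $T_a$, in which case careful analysis of the resulting decomposition of $\overline{\manifold}$ produces a contradictory boundary component. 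For incompressibility of $A$: any compressing disc for $A$ combined with the link disks on the cusp tori would produce spheres in $\overline{\manifold}$ bounding balls by irreducibility; combined with the $\partial$-efficiency of $\tri$ guaranteed by Theorem~\ref{lem:bdEfficient}, the resulting decomposition would ultimately allow the edge $e$ to be isotoped into the boundary, contradicting Corollary~\ref{cor-min-bddry-eff}.

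For the remaining degenerate configurations---where $cd$ and $c'd'$ are identified in $\tri$, or a face pair containing $e$ is self-paired within a single tetrahedron---I would either perform a 2--0 move on $e$, reducing the number of tetrahedra and contradicting the minimality of $\tri$, or analyse the vertex links to show that at least one of them is forced to be a sphere, contradicting the hypothesis that $\manifold$ is cusped hyperbolic with toroidal cusp links.

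The main obstacle will be the detailed analysis of the incompressibility of $A$ in the case $a = b$, where $\partial A$ consists of two nullhomotopic circles on the same cusp torus: the standard argument via cusp torus incompressibility fails in this sub-case since the boundary curves are trivial, so the contradiction requires a careful combination of the $\partial$-efficiency of $\tri$ with the irreducibility of $\overline{\manifold}$ and the topology of the star $B$ to rule out every compatible configuration.
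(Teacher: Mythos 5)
Your overall architecture is inverted relative to what actually works, and this creates a genuine gap. If $e$ has degree two and none of the faces or edges of the bipyramid $\Delta_1\cup\Delta_2$ are already identified with one another (your ``generic'' case), the correct conclusion is simply that the 2--0 move succeeds: one discards $\Delta_1$ and $\Delta_2$ and re-identifies the four remaining faces in pairs, producing an ideal triangulation of $\manifold$ with two fewer tetrahedra and contradicting minimality. There is no topological obstruction to be extracted from this case, and the one you propose does not exist: the boundary of the star $B=\Delta_1\cup\Delta_2$ is a pillowcase sphere passing through \emph{all four} vertices of the two tetrahedra, not just the two endpoints of $e$, so after truncation it becomes a planar surface with up to four boundary circles rather than an annulus; moreover it bounds the ball $B$ on one side, so it can never be essential and no appeal to the anannular hypothesis is possible. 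Conversely, the 2--0 move that you reserve for the ``degenerate'' configurations is precisely what is obstructed there --- those configurations are by definition the ones in which the faces or edges that the collapse would identify already coincide.

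The paper's proof therefore runs in the opposite direction: minimality forces an obstruction to the 2--0 move, and each obstruction is analysed separately. If the edges opposite $e$ in the two tetrahedra are identified with reversed orientation, one obtains a properly embedded M\"obius band, yielding either an $\R P^3$ summand or an essential Seifert fibred piece and contradicting hyperbolicity; if they are identified coherently, one obtains a properly embedded annulus forcing a boundary-parallel torus that cannot normalise to the vertex link, contradicting $\partial$--efficiency (Theorem~\ref{lem:bdEfficient}); and if faces are identified, either the flattening is still a cell-like map that reduces the number of tetrahedra, or the opposite edges become identified and one falls back to the previous obstructions. Your sketch contains essentially none of this case analysis (the M\"obius band case is entirely absent), and the step you flag as the ``main obstacle'' --- proving incompressibility of an annulus that bounds a ball --- is not an obstacle but an impossibility.
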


\begin{proof}
The \emph{figure-eight} knot complement {\tt m004}\footnote{We include the names of manifolds from the cusped hyperbolic manifold census of SnapPea.} and its sister {\tt m003} are the only cusped hyperbolic 3--manifold having an ideal triangulation with two ideal tetrahedra. Neither of these triangulations has an edge of degree two. Hence, we may assume $\tri$ has at least three tetrahedra and that the preimage of an edge $e$ of degree two consists of two edges $e'$ and $e''$ in two distinct tetrahedra, $\tilde{\Delta}'$ and $\tilde{\Delta}'',$ respectively (note that an edge of degree two cannot be contained in a single tetrahedron). 

We fix the following notation.  The vertices of $\tilde{\Delta}'$ are denoted by $A',B',C',$ and $D'$ with $e' = A'D'.$  The vertices of $\tilde{\Delta}''$ are denoted by $A'',B'',C'',$ and $D''$ with $e'' = A''D''.$  The face identifications are $(A'B'D')\leftrightarrow (A''B''D'')$ and $(A'C'D')\leftrightarrow (A''C''D''),$ with $(A'D') = (A''D'') = e.$ 

\begin{figure}[htb]
 \begin{center}
 \includegraphics[height=4cm]{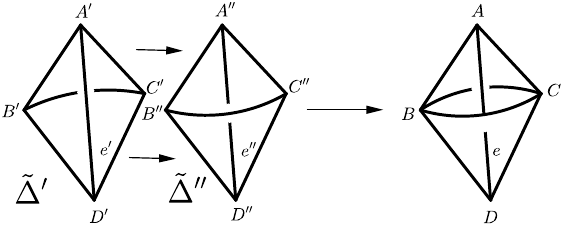}
 \caption{Edge of degree two. \label{f-edge-degree-2}}
 \end{center}
\end{figure}

Typically, in such a case of an edge of degree two, one can discard the two tetrahedra $\tilde{\Delta}'$ and $\tilde{\Delta}''$ and make new identifications $(A'B'C')\leftrightarrow (A''B''C'')$ and $(B'C'D')\leftrightarrow (B''C''D'').$

If this is possible, then there is a triangulation of $\manifold$ having fewer tetrahedra than $\tri,$ contradicting that $\tri$ is minimal. It follows that under the assumption of $\tri$ minimal and $e$ an edge of degree two, there must be an obstruction to such a re-identification, that is, some of the faces to be identified must already be identical. This can happen on the level of edges (Obstruction 1 below), or on the level of triangles (Obstruction 2 below). We consider both obstructions and show that each leads to a contradiction, establishing that there are no edges of degree two.

\noindent{\bf Obstruction 1.} The edges $B'C'$ and $B''C''$ are identified, preventing the collapse, see Figure~\ref{fig:obstruction1}. In this situation we have two subcases:

\begin{enumerate}
  \item Edges are identified with opposite orientation (i.e., $B'C'$ is identified with $C''B''$). If this is the case, then in the truncated 3--manifold $\core$ of $\manifold$ with an open neighbourhood of the vertices removed, we have a properly embedded M\"{o}bius band $\mathcal{N} \subset \core.$ If $\partial \mathcal{N}$ is an inessential curve on the vertex linking torus $T_0 \subset \partial \core$, $\manifold$ admits an embedded projective plane and we have a connected summand of $\R P^3$ in $\manifold,$ contradiction to $\manifold$ being hyperbolic. Hence, assume that $\partial \mathcal{N}$ is an essential curve on $T_0$. From this situation we can construct a Seifert fibred space properly embedded in $\manifold$ which is impossible since our manifold is hyperbolic. To see this, note that the boundary of a small regular neighbourhood of $T_0 \cup  \mathcal{N}$ is a torus. Since our manifold is hyperbolic this torus must be inessential, and thus it must bound a solid torus outside $T_0 \cup  \mathcal{N}.$ Hence the Seifert structure on $T_0 \cup  \mathcal{N}$ extends to a Seifert fibering over this solid torus with at most two exceptional fibres, one at the centre line of the M\"{o}bius band $\mathcal{N}.$ 
  \item Edges are identified with coinciding orientation (i.e., $B'C'$ is identified with $B''C''$). In this case we have a properly embedded annulus $\mathcal{A} \subset  \core.$ Note that, since $\manifold$ is hyperbolic, $\mathcal{A}$ must be either compressible or boundary-parallel. Hence, if one boundary component of $\mathcal{A}$ is an essential curve, the other one is, too, and both are parallel curves on the same torus boundary component $T_0 \subset \partial \core .$ In this case the annulus $\mathcal{A}$ runs parallel to $T_0$ with either $AB$ and $AC$ or $BD$ and $CD$ contained in the region between $\mathcal{A}$ and $T_0$. Now consider a torus running parallel to $T_0$ and bounding a collar neighbourhood of $T_0$ in $\core$ containing $\mathcal{A}$. Normalising this torus cannot result in a surface normally isotopic to $T_0$ because $\mathcal{A}$ acts as a barrier. Accordingly, the triangulation is not boundary efficient. This gives a contradiction to the triangulation being minimal due to Theorem~\ref{lem:bdEfficient}. 

If the boundary components are inessential, colour the bounding discs in the respective torus boundary components. More precisely, colour the disc near vertex $B$ {\em purple}, and the one near vertex $C$ {\em green}. W.l.o.g, edges $AB$ and $AC$ (truncated to run inside $\core$) run inside the solid torus bounded by $\mathcal{A}.$ By construction, the endpoint near $B$ is purple and the endpoint near $C$ is green. Moreover, the endpoints near $A$ must either both be purple or both be green (note that $\mathcal{A}$ must be compressible). It follows that one of the edges $AB$ and $AC$ has endpoints with equal colours. It thus follows that this edge must be in the neighbourhood of the respective torus boundary components of $\core,$ see Figure~\ref{fig:obstruction1_2}. Again, we use Theorem~\ref{lem:bdEfficient} to produce a contradiction to the fact that the triangulation is minimal.
\end{enumerate}

It follows that Obstruction 1. cannot occur.

\begin{figure}[htb]
 \begin{center}
 \includegraphics[height=4cm]{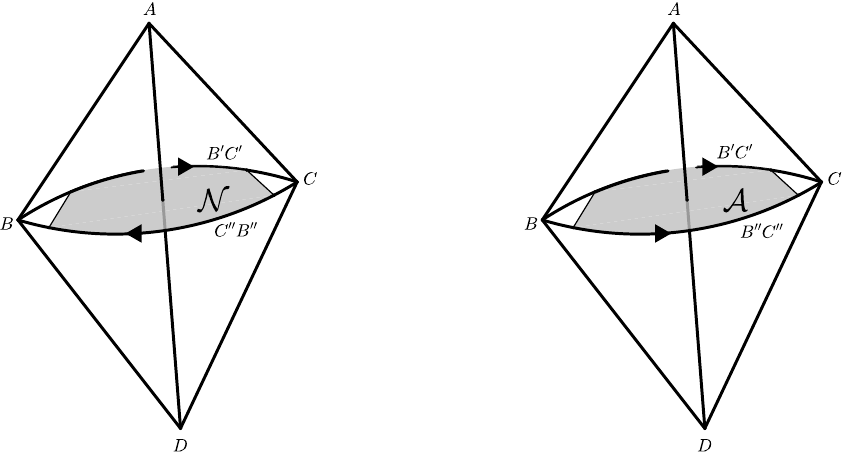}
 \caption{Obstruction 1 (1). Edge $B'C'$ identified with edge $C''B''$ forming a punctured $\R P^2,$ or M\"obius strip $\mathcal{N}$ (left). Obstruction 1 (2). Edge $B'C'$ identified with edge $B''C''$ forming an annulus $\mathcal{A}$ (right).}
 \label{fig:obstruction1}
 \end{center}
\end{figure}

\begin{figure}[htb]
 \begin{center}
 \includegraphics[width=.6\textwidth]{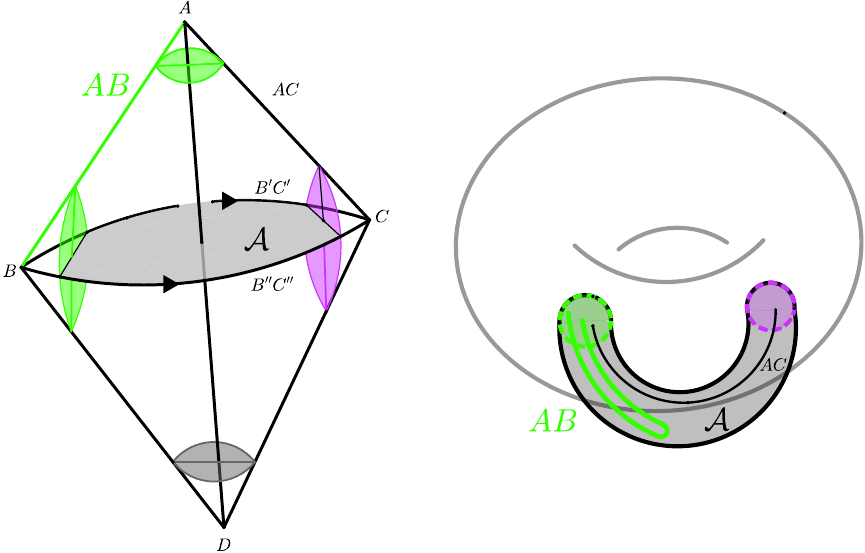}
 \caption{Obstruction 1 (2). Both boundary components of $\mathcal{A}$ are inessential $\partial \core$.}
 \label{fig:obstruction1_2}
 \end{center}
\end{figure}

\noindent{\bf Obstruction 2.} Faces from the front are identified with faces of the back (cf. Figure~\ref{f-edge-degree-2}). As a result of such a face identification, the two edges $B'C'$ and $B''C''$ may become identified. If this the case we fall back onto the cases dealt with in Obstruction~1. Again, we have two subcases:

\begin{enumerate}
  \item Face $(A'B'C')$ is identified with the face $(B''C''D'')$ or $(A''B''C'')$ is identified with the face $(B'C'D').$ These situations are symmetric. As explained above we may assume that $B'C'$ and $B''C''$ are not identified. Suppose we flatten the two tetrahedra so that $(A'B'C')$ is identified with $(A''B''C'')$ and $(B'C'D')$ is identified with $(B''C''D'').$ Since the interiors of $B'C'$ and $B''C''$ were initially disjoint, the preimage of a point under this operation is either a point or an interval. In particular this flattening operation is cell-like and hence preserves the homeomorphism type of the manifold $\manifold.$ This gives a new triangulation of $\manifold$ with fewer tetrahedra which contradicts our assumption that the triangulation $\tri$ is minimal. 

  \item The face $(A'B'C')$ is identified with the face $(A''B''C'')$ or the face $(B'C'D')$ is identified with the face $(B''C''D'').$ Again, these situations are symmetric. However, if face $(A'B'C')$ is glued to face $(A''B''C'')$ without a twist, $B'C'$ is glued to $B''C''$ and we are done (alternatively, if $(A'B'C')\leftrightarrow (A''B''C''),$  then the vertex at $A' =A''$ is a manifold point and $\tri$ is not an ideal triangulation). If the two faces are identified with a twist, all edges of the two triangles become identified. In particular $B'C'$ and $B''C''$ are identified and, again, we are done. 
\end{enumerate}

Since we have that $\tri$ contains at least three tetrahedra, and that all possibilities of having an edge of degree two lead to a contradiction, there are no edges of degree two. 
\end{proof}

\begin{lemma}
\label{lem:lst no folds}
Let $\tri$ be a minimal ideal triangulation of a cusped hyperbolic 3--manifold $\manifold.$ Suppose $\lst^{\star}$ is a layered $\partial$--punctured solid torus and there is a combinatorial map $\varphi\co\lst^{\star}\to \manifold.$ Then $\varphi$ is an embedding.
\end{lemma}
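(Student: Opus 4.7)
My plan is to prove the lemma by induction on the number $n$ of tetrahedra in $\lst^\star$, leveraging the restrictions on the local structure of minimal ideal triangulations established in Lemmas~\ref{thm:degone} and~\ref{lem:degree two} and Theorem~\ref{thm:no3fold}, together with the $\partial$-efficiency afforded by Theorem~\ref{lem:bdEfficient} and its consequence Corollary~\ref{cor-min-bddry-eff}. Because $\varphi$ is combinatorial, it is a homeomorphism on the interior of each $3$--simplex, so any failure of injectivity must identify two distinct sub-simplices of $\lst^\star$ inside $\manifold$, and the task is to rule out every such identification.

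For the base case, $\lst^\star = \lst^\star(1,2,3)$ consists of a single tetrahedron with one pair of faces identified, giving three boundary edges and one interior edge. I would go through the small finite list of ways that distinct vertices, edges or faces of $\lst^\star(1,2,3)$ could be identified by $\varphi$. Any additional identification of two of the three boundary edges of $\lst^\star(1,2,3)$ would collapse one of them to degree $\le 2$ inside $\manifold$, contradicting Lemmas~\ref{thm:degone} and~\ref{lem:degree two}; identifying all three edges of a face of $\lst^\star(1,2,3)$ forces a $3$--fold or dunce face, contradicting Theorem~\ref{thm:no3fold}; and identifying the interior edge with a boundary edge creates an edge in $\widehat{\manifold}$ bounding an embedded disc, contradicting Corollary~\ref{cor-min-bddry-eff}.

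For the inductive step, write $\lst^\star = \lst^\star_0 \cup \tilde{\Delta}$, where $\tilde{\Delta}$ is the most recently layered tetrahedron and $\lst^\star_0$ is a layered $\partial$--punctured solid torus with $n-1$ tetrahedra. By the inductive hypothesis, $\varphi|_{\lst^\star_0}$ is an embedding. The tetrahedron $\tilde{\Delta}$ is attached to $\lst^\star_0$ along two faces meeting at the layering edge $e$, and contributes two free faces meeting at a new boundary edge $e'$. A new failure of $\varphi$ on $\lst^\star$ must fall into one of: (a) $\varphi(\tilde{\Delta})$ equals $\varphi(\tilde{\Delta}')$ for some $\tilde{\Delta}' \subseteq \lst^\star_0$; (b) one of the free faces of $\tilde{\Delta}$ is identified with a boundary face of $\lst^\star_0$ or with the other free face of $\tilde{\Delta}$; or (c) the new edge $e'$ is identified with an existing edge of $\varphi(\lst^\star_0)$. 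In each case I would trace the induced identifications on edges and faces to derive a contradiction using the same catalogue of forbidden configurations as in the base case, together with the fact that the layering of $\tilde{\Delta}$ onto $e$ is uniquely determined by the combinatorics of $\lst^\star_0$.

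The main obstacle will be the case analysis in the inductive step: $\tilde{\Delta}$ could in principle wrap back onto several distinct parts of $\lst^\star_0$, and one has to use the layering structure to see that every such fold introduces a configuration forbidden in a minimal ideal triangulation. I expect the most delicate case to be when $\varphi(e')$ coincides with the image of an interior or core edge of some $\lst^\star(1,2,3) \subseteq \lst^\star_0$, since this does not immediately produce a low-degree edge; here one should use $\partial$--efficiency to show that the wrap-around produces a normal surface that is isotopic but not normally isotopic to a vertex-linking torus, contradicting Theorem~\ref{lem:bdEfficient}.
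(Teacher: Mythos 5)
Your inductive framing differs from the paper (which argues directly about which identifications of the boundary of $\lst^{\star}$ can occur), but the more serious problem is a concrete error at the heart of your base case, and the same error would propagate to the inductive step. You claim that identifying two of the three boundary edges of $\lst^{\star}(1,2,3)$ would ``collapse one of them to degree $\le 2$ inside $\manifold$.'' This is backwards: identifying edges can only increase degree, and in any case the degree of an edge of $\tri$ is bounded \emph{below}, not above, by the number of times it appears in the subcomplex, since arbitrarily many tetrahedra outside $\lst^{\star}$ may be incident with it. So Lemmas~\ref{thm:degone} and~\ref{lem:degree two} give you nothing here. (Also, $\lst^{\star}(1,2,3)$ has no interior edge --- all three of its edges lie on the boundary torus, with internal degrees $1$, $2$ and $3$ --- so your sub-case involving the interior edge is vacuous.) The identification of a single pair of boundary edges of $\lst^{\star}$ is precisely the configuration that does \emph{not} produce any forbidden local pattern, and it is the case that consumes essentially all of the paper's proof.

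For that case the paper proceeds as follows: after noting that identifying the two boundary \emph{triangles} makes $\manifold$ a punctured lens space, and that identifying all three boundary edges is excluded by Theorem~\ref{thm:no3fold}, it takes a regular neighbourhood $N$ of $\lst^{\star}\cap\core$ and analyses $\partial N$ as a surface assembled from an octagon and two quadrilaterals of the exchange annulus around the identified edge. Up to symmetry this surface is a once-punctured torus (meaning the identification did not actually come from outside $\lst^{\star}$), a once-punctured Klein bottle (impossible since $\partial N$ separates), or a thrice-punctured sphere; in the last case a parity argument on essential boundary curves and a disc-pasting construction yield a torus parallel to a cusp torus $T_0$ but disjoint from an edge incident with $T_0$, contradicting $\partial$--efficiency (Theorem~\ref{lem:bdEfficient}) because the edge acts as a barrier to normalising back to the vertex link. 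Your closing sentence correctly anticipates that $\partial$--efficiency is the right tool, but you never construct this surface nor justify why its normalisation cannot recover the vertex-linking torus, so the essential content of the proof is missing; and the induction itself buys you nothing, because the problematic identification is between two edges of the final boundary of $\lst^{\star}$ and does not decompose along the layering.
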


\begin{proof}
We need to show that no identifications of the boundary faces of $\lst^{\star}$ are possible in $\tri.$  
If the two triangles of $\partial \lst^{\star}$ are identified, then $\manifold$ is a punctured lens space, contrary to the assumption that $\manifold$ is hyperbolic.

Hence, identifications can occur at most at the edges of $\partial \lst^{\star}.$ But it follows from Theorem~\ref{thm:no3fold} that not all three edges of $\partial \lst^{\star}$ can be identified, and thus we only need to consider the case that two edges of $\partial \lst^{\star}$ are glued together. 

A priori there are a total of six choices of how two boundary edges of $\partial \lst^{\star}$ can be identified (three pairs of edges and two possible orientations each). Ignoring the third edge, all of them result in a quadrilateral with all of its edges identified in the pattern shown in Figure~\ref{fig:lstbdrglued} in the centre and the third boundary edge is one of he two diagonals of the quadrilateral. Denote the unique boundary edge of the quadrilateral by $e$.

We now pass to a small regular neighbourhood $N = \nhd (\lst^{\star} \cap \core)$ of $\lst^{\star}$ in the compact core $\core$ of $\manifold$. By construction, its boundary $\partial N$ must be a surface with boundary properly embedded in $\core$. We can think of $\partial N$ as the surface consisting of the octagon lying in the interior of the quadrilateral of $\partial \lst^{\star}$, and two quadrilaterals from the annulus that is the regular neighbourhood of the ideal edge $e$ in $\core , $ see Figure~\ref{fig:lstbdrglued} on the right. The surface is obtained by gluing the dotted edges of the octagon to the dotted edges of the quadrilateral. The solid edges then denote the boundary of $\partial N$.

\begin{figure}[htb]
 \begin{center}
 \includegraphics[width=\textwidth]{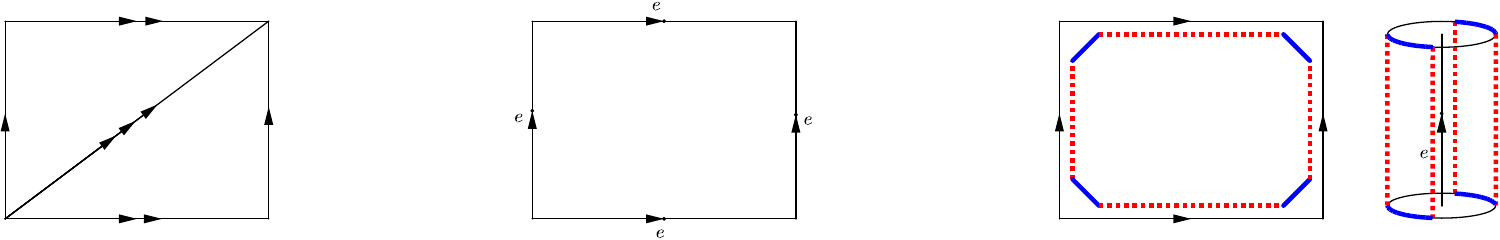}
 \caption{Left: Boundary of $\lst^{\star}.$ Centre: Boundary of $\lst^{\star}$ after disregarding the third boundary edge with two boundary edges identified. Right: Octagon and two quadrilaterals making up the boundary $\partial N$ of a regular neighbourhood of $\lst^{\star}$ in $\core$. Dotted edges are identified, solid edges denote the boundary of $\partial N$.}
 \label{fig:lstbdrglued}
 \end{center}
\end{figure}

Up to symmetry, there are three ways two perform these gluings (notice that the red edges of the octagon can be paired with the red edges of the two quadrilaterals arbitrarily, but the orientation of the gluings is fixed). All of them are shown in Figure~\ref{fig:bdrs}. The first set of gluings results in a once-punctured torus. This type of identification implies that the only identifications of $e$ are the ones from inside $\lst^{\star}$, a contradiction to the assumption that a pair of boundary edges of $\lst^{\star}$ are identified in $\tri$. The second results in $\partial N$ a once-punctured Klein bottle, and the third results in $\partial N$ a thrice-punctured sphere. 

\begin{figure}[htb]
 \begin{center}
 \includegraphics[height=4cm]{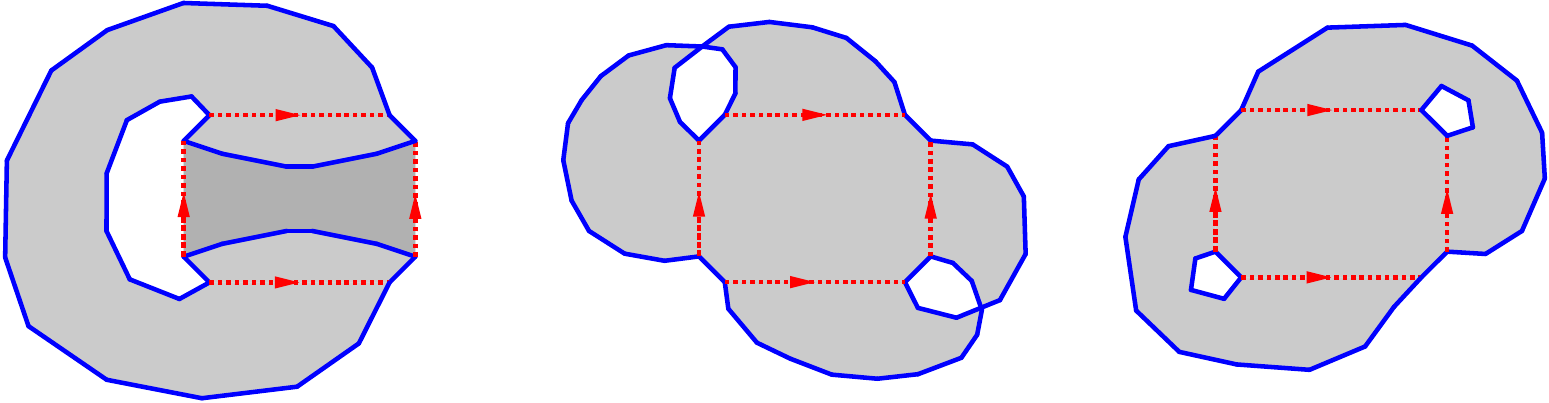}
 \caption{Three possible ways to obtain $\partial N$. Left: a once-punctured solid torus. Centre: a once-punctured Klein bottle. Right: a thrice-punctured sphere.}
 \label{fig:bdrs}
 \end{center}
\end{figure}

Since $\partial N$ is separating it cannot be a once-punctured Klein bottle. Hence, it is a separating thrice-punctured sphere with all boundary components on the same boundary component $T_0$ of $\core$. A parity argument shows that the number of essential boundary components of $\partial N$ must be even. Moreover, note that, by construction, $T_0 \cap N$ is the neighbourhood of a pinched disc in $T_0$ and thus connected. 

It follows that $T_0 \cap N$ must fall into one of two categories.

\begin{enumerate}
  \item Two boundary components of $\partial N$ are essential and hence $T_0 \cap N$ must be equal to an annulus with a disc removed. Since $N$ is separating, this disc must be outside $N$ and thus disjoint from $e$. Hence, pasting this disc into $T_0 \cap N$ yields a necessarily boundary-parallel annulus. 
  \item No boundary component of $\partial N$ is essential and hence $T_0 \cap N$ must be equal to a subsurface of $T_0$ with two discs removed. Again, these discs must be outside $N$ and thus disjoint from $e$. Pasting the discs into $T_0 \cap N$ results in a necessarily boundary-parallel disc.
\end{enumerate}

Now, as in earlier arguments, consider a torus running parallel to $T_0$ and bounding a collar neighbourhood of $T_0$ in $\core$ containing the annulus or disc. Normalising this torus cannot result in a surface normally isotopic to $T_0$ because it must be disjoint from $e$. Accordingly, the triangulation is not boundary efficient. This gives a contradiction to the triangulation being minimal due to Theorem~\ref{lem:bdEfficient} and thus $\varphi\co\lst^{\star}\to \manifold$ must be an embedding.
\end{proof}

\begin{lemma}
  \label{lem:degreethree}
  Let $\tri$ be a minimal ideal triangulation of a cusped hyperbolic 3--manifold $\manifold$ and $e$ be an ideal edge of degree three. Then $e$ is contained in an embedded two tetrahedron subcomplex of $\tri$ combinatorially equivalent to the layered $\partial$--punctured solid torus $\lst^\star(1, 3, 4).$ 
In particular, $e$ is contained in a maximal layered $\partial$--punctured solid torus $\lst^\star$, twice in its core tetrahedron and once in the tetrahedron adjacent to the core, and no other edge in $\lst^\star$ is of degree three.

Moreover, $\tri$ contains at least three ideal tetrahedra and each ideal edge of degree three is contained in its own subcomplex of this type.
\end{lemma}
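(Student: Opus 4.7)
The plan is to analyze the local structure around the degree-3 edge $e$ by a case analysis on how its three edge-slots are distributed among the tetrahedra of $\tri$, and to argue that the only configuration compatible with minimality is that $e$ sits inside a 2--tetrahedron subcomplex combinatorially equivalent to $\lst^\star(1,3,4)$ with $e$ as its unique interior edge of degree three. The argument will be modelled on the proof of Lemma~\ref{lem:degree two}: instead of attempting a 2--tetrahedron reduction, I attempt a 3--2 Pachner move on the star of $e$ and classify the possible obstructions to performing it.

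The three slots of $e$ may lie in $1$, $2$, or $3$ distinct tetrahedra of $\tri$. First, I would rule out the single-tetrahedron case. Three edges of a single tetrahedron identified to $e$ either span a face---giving a 3--fold or dunce face, contradicting Theorem~\ref{thm:no3fold}---or form a tripod at a vertex or some other configuration, each of which forces either a forbidden face at an adjacent position, an edge of degree at most two (contradicting Lemma~\ref{thm:degone} or Lemma~\ref{lem:degree two}), or a properly embedded surface in $\core$ violating $\partial$--efficiency via Theorem~\ref{lem:bdEfficient}. In the three-tetrahedra case, the 3--2 Pachner move at $e$ would reduce the tetrahedron count and hence contradict minimality, so the move must be obstructed; the obstructions come, in analogy with Obstructions~1 and 2 of Lemma~\ref{lem:degree two}, in two families: pairwise identifications among the three edges opposite to $e$, or among the six outer faces of the three tetrahedra. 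Each possibility, with each relative orientation, is reduced to one of the standing contradictions (forbidden face, low-degree edge, or failure of $\partial$--efficiency). In the remaining two-tetrahedron case with slot distribution $(2,1)$ across tetrahedra $\tilde{\Delta}_1, \tilde{\Delta}_2$, I split according to whether the two edges in $\tilde{\Delta}_1$ identified to $e$ are adjacent or opposite in $\tilde{\Delta}_1$. The adjacent configuration is ruled out by the same style of argument, while the opposite configuration is precisely the combinatorial pattern of the core of $\lst(1,2,3)$ with $e$ as its boundary edge of degree $2$; then $\tilde{\Delta}_2$ is layered on $e$, producing a 2--tetrahedron subcomplex combinatorially equivalent to $\lst^\star(1,3,4)$ with $e$ appearing twice in the core tetrahedron and once in the layered tetrahedron, as required.

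Embeddedness of this subcomplex follows from Lemma~\ref{lem:lst no folds}, and I then extend by successive layerings along boundary edges to a maximal $\lst^\star$. Induction on the layering process shows that each new interior edge produced by a subsequent layering has degree at least $4$, so $e$ remains the unique interior edge of degree three in the maximal $\lst^\star$. A boundary edge of the maximal $\lst^\star$ has its two incident boundary faces glued into tetrahedra of $\tri$ outside $\lst^\star$, and so receives at least two extra slots from outside; hence its degree in $\tri$ exceeds its subcomplex degree by at least two, and in particular no boundary edge of the maximal $\lst^\star$ can be a degree-3 edge of $\tri$. Consequently distinct degree-3 edges of $\tri$ lie in distinct maximal $\lst^\star$ subcomplexes. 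Finally, $\tri$ must have at least three tetrahedra: the only cusped hyperbolic 3--manifolds admitting a 2--tetrahedra ideal triangulation are the figure-eight knot complement and its sister ({\tt m004} and {\tt m003}), whose minimal ideal triangulations are already known to contain no degree-3 edge.

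The principal technical obstacle will be the exhaustive case analysis in the single-tetrahedron and three-tetrahedra sub-cases, which requires careful bookkeeping of edge and face identifications and their orientations together with the induced compatibility conditions on the link of $e$ and on the vertex-linking tori. The guiding principle throughout is uniform: any non-trivial identification either collapses a face into one of the types forbidden by Theorem~\ref{thm:no3fold}, creates an edge of too-small degree contradicting Lemma~\ref{thm:degone} or Lemma~\ref{lem:degree two}, or traps a normal surface in $\core$ that cannot be normally isotoped to a vertex-linking torus, thereby violating Theorem~\ref{lem:bdEfficient}.
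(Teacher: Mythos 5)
Your overall strategy is the same as the paper's: split on whether the three incidences of $e$ lie in one, two, or three distinct tetrahedra; kill the three-tetrahedron case with a 3--2 move; identify the two-tetrahedron complex as $\lst^\star(1,3,4)$ (the paper outsources this enumeration to the proof of Proposition~9 of its reference on closed minimal triangulations, whereas you sketch it directly); and get embeddedness and maximality from Lemma~\ref{lem:lst no folds}. One conceptual misstep: your premise that in the three-tetrahedron case ``the move must be obstructed'' is false. When the three tetrahedra around a degree-three edge are pairwise distinct, the 3--2 move is \emph{never} obstructed, because it replaces only the interior of the bipyramid and leaves its six boundary triangles (and hence any self-identifications among them, and any identifications among the three equatorial edges) untouched. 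This is exactly why the paper dispatches that case in one sentence. Your planned classification of obstructions in analogy with Lemma~\ref{lem:degree two} would simply discover that there are none; it does not break the proof, but you should not structure the case around finding an obstruction that cannot exist.

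There is also a genuine gap in your argument that no boundary edge of the maximal $\lst^\star$ has degree three: it fails for the \emph{unital} boundary edge. That edge has degree $1$ with respect to $\lst^\star$, so ``subcomplex degree plus at least two external incidences'' only yields total degree $\ge 3$, which is precisely the value you need to exclude. (For the two non-unital boundary edges, whose degrees with respect to $\lst^\star$ are at least $3$, your count does work.) Closing this requires a separate argument: for instance, if the unital edge had degree three, the already-established first part of the lemma would exhibit it as the interior edge of an $\lst^\star(1,3,4)$ whose tetrahedron adjacent to the core is the top tetrahedron of $\lst^\star$, which either contradicts maximality of $\lst^\star$ or forces an identification excluded by Lemma~\ref{lem:lst no folds}. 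Note that the paper's own proof only asserts uniqueness of the degree-three edge in the \emph{interior} of the complex (which is all that is used later, e.g.\ in Claim~2 of the proof of Theorem~\ref{thm:sumofnorms}); since you chose to argue the boundary case explicitly, the write-up as it stands has a hole at the unital edge.
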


\begin{proof}
We may and will work with the pseudo-manifold $\widehat{\manifold}.$ Let $e \in \tri$ be an edge of degree $3.$ If $e$ is incident with three distinct tetrahedra, then a $3$-2--move produces a triangulation with fewer tetrahedra, contradicting minimality of the triangulation. If $e$ is incident with exactly one tetrahedron, it cannot be of degree $3.$ Hence, $e$ is incident with exactly two tetrahedra. A brute-force enumeration shows that there exists only one 2--tetrahedra complex with an edge of degree $3$ in its interior -- the layered solid torus $\lst(1,3,4)$ (see case $|\tilde{\Delta}_e|=2$ in the proof of \cite[Proposition 9]{Jaco-minimal-2009} for details). Moreover, $\lst(1,3,4)$ has exactly one such degree $3$ edge in its interior. This edge occurs twice in its core tetrahedron and once in the other tetrahedron (i.e., the tetrahedron adjacent to the core tetrahedron).

It follows from Lemma~\ref{lem:lst no folds} that no identifications between the faces of this layered solid torus are possible, hence the subcomplex is embedded, contained in a maximal layered $\partial$--punctured solid torus and, in particular, the triangulation contains at least three tetrahedra.
\end{proof}

We remark that an analysis of edges of degree four and five can be carried out following \cite{Jaco-minimal-2009}, but this is not needed for the main applications of this paper and hence omitted.


\subsection{Layered punctured solid tori in ideal triangulations}

The argument for closed 3--manifolds in \cite{Jaco-Z2-2013} hinged on an understanding of the intersections of \emph{maximal} layered solid tori. We now carry such an analysis out in the case of cusped hyperbolic 3--manifolds. This requires a different approach due to the fact that the combinatorics of minimal ideal triangulations is less restricted than the combinatorics of minimal and 0--efficient (material) triangulations.
 
As in \cite{Jaco-Z2-2013} we focus on subcomplexes in triangulations that are maximal layered $\partial$--punctured solid tori (see Section~\ref{sec:lst} for a brief introduction).

\begin{lemma}
  \label{lem:intersection}
  Let $\tri$ be a minimal ideal triangulation of a cusped hyperbolic 3--manifold $\manifold.$ Then 
  the intersection of any two maximal layered $\partial$--punctured solid tori is either empty, an ideal vertex, or an ideal edge of~$\tri.$   
\end{lemma}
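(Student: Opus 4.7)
The plan is to show, by contradiction, that two distinct maximal layered $\partial$--punctured solid tori $\lst^\star_1, \lst^\star_2 \subset \tri$ cannot share a tetrahedron or a $2$--face, so that their intersection lies in the $1$--skeleton of the boundaries $\partial \lst^\star_i$; a final extension argument then restricts this intersection to at most a single ideal edge or vertex.

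First I would rule out a shared tetrahedron $\Delta$. The core tetrahedron of any LST is distinguished in $\tri$ by having two of its faces identified to each other (the M\"obius identification), while a non-core tetrahedron in an embedded maximal LST (using Lemma~\ref{lem:lst no folds}) has its four faces glued to four distinct neighbouring tetrahedra. Hence $\Delta$ is either core of both LSTs or non-core of both. If $\Delta$ is core of both, the layering sequence from $\Delta$ is uniquely forced by the fixed face-pairings of $\tri$, giving $\lst^\star_1 = \lst^\star_2$ by maximality---a contradiction. If $\Delta$ is non-core in both, I would trace the LSTs outward from $\Delta$ in both directions; the fixed gluings of $\tri$ force the neighbouring tetrahedra in each LST to coincide, and iterating yields either aligned cores (again forcing $\lst^\star_1 = \lst^\star_2$) or an ``overshoot'' configuration in which the last layer of one LST could be added to the other, contradicting maximality.

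Next I would rule out a shared $2$--face $F$. Since each $2$--face in $\tri$ is incident to exactly two tetrahedra, if $F$ were interior to one LST then both incident tetrahedra would lie in it, forcing a shared tetrahedron with the other LST and contradicting the previous step. Hence $F$ lies on the boundary of both LSTs. Using that $\partial \lst^\star_i$ is a $1$--vertex, two-triangle, three-edge torus, together with Lemma~\ref{lem:lst no folds} (which forbids self-identifications within $\partial \lst^\star_i$), I would show that sharing a single boundary face forces the remaining pair of boundary faces to coincide as well; once both pairs match, the last-layer tetrahedron of $\lst^\star_2$ may be layered onto $\lst^\star_1$ at the shared boundary edge, producing a strictly larger layered $\partial$--punctured solid torus and contradicting maximality. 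A similar layering-based extension argument applied to shared boundary edges then reduces the intersection to at most a single ideal edge or vertex.

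The most delicate step will be showing that sharing a single boundary face forces the second pair of boundary faces to match. I expect this to use the rigid combinatorics of the $1$--vertex torus boundary, together with Theorem~\ref{lem:bdEfficient} ($\partial$--efficiency, to exclude spurious boundary-parallel surfaces created by the gluing of the two LSTs) and Theorem~\ref{thm:no3fold} (absence of $3$--fold and dunce faces, to rule out degenerate identifications in the joint boundary configuration).
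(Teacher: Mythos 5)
Your reduction to the case where the two tori meet only in the $1$--skeleton is broadly in the right spirit (the paper also first disposes of a shared tetrahedron and of two shared faces, though more cheaply: two solid tori glued along their whole boundary tori give a punctured lens space, contradicting hyperbolicity). The genuine gap is in everything after that. You claim that sharing a single boundary face forces the second pair of boundary faces to coincide, and that shared boundary edges can be handled by ``a similar layering-based extension argument.'' Neither claim holds. Two maximal layered $\partial$--punctured solid tori sharing exactly one boundary face, or sharing two or three boundary edges but no face, are perfectly consistent combinatorial configurations: there is no layering move available (layering requires attaching a \emph{new} tetrahedron along two faces sharing an edge, which is not what happens when two already-maximal complexes abut), so maximality gives you nothing, and the rigid combinatorics of the one-vertex torus boundary does not force the remaining faces to match. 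These cases must be excluded by topology, not by maximality.

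The paper's argument for exactly these cases is the heart of the proof and is absent from your plan. When the boundaries share two edges (a common spine), one glues the truncated complementary discs of the two boundary tori along the exchange annuli of the shared edges to obtain a properly embedded $4$--punctured sphere in the compact core, disjoint from the shared edges; analysing how its boundary curves sit on the cusp torus $T_0$ and pasting in discs of $T_0$ yields a torus parallel to $T_0$ but disjoint from edges incident to $T_0$, whose normalisation cannot be the vertex link --- contradicting $\partial$--efficiency (Theorem~\ref{lem:bdEfficient}). The three-edge and single-face cases are variants with $3$-- and $6$--punctured spheres. You do invoke Theorem~\ref{lem:bdEfficient}, but only as an auxiliary tool to force a face-matching that does not actually follow; the correct use is to derive the contradiction directly from the embedded punctured sphere. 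Without that construction your proof does not close.
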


\begin{proof}
Let $\lst_a, \lst_b \subset \tri$ be two maximal $\partial$--layered solid tori. If their boundaries $\partial \lst_a$ and $\partial \lst_b$ meet in two faces, $\tri$ must be homeomorphic to a punctured lens space. A contradiction. If $\lst_a$ and $\lst_b$ share a common tetrahedron, remove the tetrahedra in the intersection from one of them and observe, that they now meet in two faces. Again, a contradiction. Hence, we can assume that two maximal $\partial$--layered solid tori meet along a single face, three edges, or two edges.

\begin{enumerate}
    \item Assume that $\partial \lst_a$ and $\partial \lst_b$ meet along exactly two edges. It follows, that they must be identified along a spine of each of $\partial \lst_a$ and $\partial \lst_b,$ referred to as their {\em common spine}. Let $\mathcal{D}_a$ and $\mathcal{D}_b$ be the discs obtained by taking the interior of $\partial \lst_a$ and $\partial \lst_b$ after truncating their vertex and removing their common spine. $\mathcal{D}_a$ and $\mathcal{D}_b$ can be glued together along sections of the exchange annuli in a neighbourhood of the common spine yielding a 4--punctured sphere $\mathcal{S}$ properly embedded in $\core$ (the truncated compact core of $\manifold$) and disjoint from the two edges of the common spine.

By construction, the four boundary components of $\mathcal{S}$ in one of the torus boundary components $T_0\subset \partial \core$ bound a regular neighbourhood of the intersection of the maximal layered solid tori $\lst_a$ and $\lst_b$ with $T_0$ -- a regular neighbourhood of two discs pinched along four points (the four points being the intersection of the common spine with $T_0$). Since this regular neighbourhood is connected, of Euler characteristic $-2,$ and with four boundary components, it must be a 4--punctured sphere contained in $T_0.$ There are two cases: either all components of $\partial\mathcal{S}$ are trivial in $T_0$ (that is, $\mathcal{S}$ equals a disc of $T_0$ with three points removed), or two components of $\partial\mathcal{S}$ are non-trivial (that is, $\mathcal{S}$ equals an annulus of $T_0$ with two points removed).

First suppose $(\lst_a \cup \lst_b) \cap T_0$ is contained in a disc on $T_0$ and hence three of its boundary components bound discs in the complement of the intersection. Pasting these three discs of $T_0$ into $\mathcal{S}$ and pushing them slightly off $T_0$ yields a disc properly embedded in $\core.$ This disc can be extended to a torus parallel to boundary component $T_0$ of $\core$. Moreover, this torus is disjoint from the two edges of $\tri$ of the common spine, which both are incident to $T_0$. It follows that a normalised version of this torus cannot be equal to the vertex linking torus $T_0$ and thus $\tri$ cannot be boundary efficient. A contradiction to Theorem~\ref{lem:bdEfficient}. 

Hence suppose $(\lst_a \cup \lst_b) \cap T_0$ is contained in an annulus on $T_0$ and hence two of its boundary components bound discs in the complement of the intersection. Pasting these two discs of $T_0$ into $\mathcal{S}$ and pushing them slightly off $T_0$ yields an annulus properly embedded in $\core$ and disjoint from the two edges of $\tri$ of the common spine (which are both incident to $T_0$). This annulus together with the annulus in $T_0$ complementary to $(\lst_a \cup \lst_b) \cap T_0$ extends to a boundary parallel torus since otherwise $\manifold$ would contain a non-trivial Seifert fibred space.
Again, this boundary parallel torus leads to a contradiction to the assumption that $\tri$ is boundary efficient (cf. Theorem~\ref{lem:bdEfficient}). 

    \item Assume that $\partial \lst_a$ and $\partial \lst_b$ meet along exactly three edges. Following an analogous procedure as in the previous case, we arrive at two 3--punctured spheres $\mathcal{S}_1$ and $\mathcal{S}_2$ (instead of one 4--punctured sphere $\mathcal{S}$), both properly embedded in $\core,$ both meeting the same boundary component $T_0 \subset \partial \core.$ Again, it follows that a regular neighbourhood of $(\lst_a \cup \lst_b) \cap T_0$ is connected, of Euler characteristic $-4$ with six boundary components and thus a $6$--punctured sphere. 

Pasting in some of the boundary discs of this $6$--punctured sphere, analogously to the procedure carried out in the previous case, yields a torus parallel to boundary component $T_0$ and disjoint from three edges of the triangulation which are all incident to $T_0$ -- and we conclude as before by using Theorem~\ref{lem:bdEfficient}.  

\item Assume that $\partial \lst_a$ and $\partial \lst_b$ meet along a single face. Again, we argue as in the previous cases. Here, we end up with one 3--punctured sphere $\mathcal{S}$ (the other one being filled in as a result of the face identification). A regular neighbourhood of $(\lst_a \cup \lst_b) \cap T_0$ now is connected, of Euler characteristic $-1,$ has three boundary components, and thus is contained in a disc or an annulus on $T_0.$  We again obtain a disc or an annulus that can be extended to a torus running parallel to $T_0$ which is disjoint of three edges of the triangulation incident to $T_0$ and we use Theorem~\ref{lem:bdEfficient} once again to finish the proof. 
\end{enumerate}
This completes the proof of the lemma.
\end{proof}


\section{Normal surfaces representing $\Z_2$--homology classes}
\label{sec:normsurfs}

Normal surfaces in 1--vertex triangulations of closed 3--manifolds that are dual to non-trivial first cohomology classes were used in \cite{Jaco-minimal-2009, Jaco-Z2-2013} in order to obtain lower bounds on the complexity of 3--manifolds. This section develops a similar theory for ideal triangulations. In this section, $\closure$ is a compact, irreducible 3--manifold with non-empty boundary a finite union of tori, and $\manifold$ is its interior. 

We consider surfaces representing elements of $H_2(\manifold; \Z_2).$ We wish to emphasise that we do not work with $H_2(\closure, \partial \closure; \Z_2).$ 
We recall the following definitions from the introduction.
If $S$ is connected, let $\chi_{-}(S) = \max \{ 0,-\chi(S)\},$ and otherwise let
\[\chi_{-}(S) = \sum_{S_i\subset S} \max \{ 0,-\chi(S_i)\},\]
where the sum is taken over all connected components of $S.$ Note that $S_i$ is not necessarily orientable. Define:
\[|| \ c \ || = \min \{ \chi_{-}(S) \mid [S]= c\}.\]
Then the surface $S$ representing the class $c \in H_2(\manifold;\Z_2)$ is \emph{$\Z_2$--taut} if no component of $S$ is a sphere, real projective plane, Klein bottle or torus and $\chi(S) = -|| \ c \ ||.$ As in \cite{Thurston-norm-1986}, one observes that every component of a $\Z_2$--taut surface is non-separating and geometrically incompressible.

\subsection{Rank-1 colouring of edges and the canonical surface}
\label{subsec:colouring}

We have $\manifold = \widehat{\manifold}\setminus \widehat{\manifold}^{(0)},$ and thus Lefschetz duality gives a natural isomorphism 
\[D_\manifold \co H_2(\manifold; \Z_2) 
\cong H^1(\widehat{\manifold}, \widehat{\manifold}^{(0)}; \Z_2).\]
An element $\varphi \in H^1(\widehat{\manifold}, \widehat{\manifold}^{(0)}; \Z_2)$ can be viewed as an assignment of values $0$ or $1$ to each edge in the triangulation such that in each triangle the sum of all values at its edges equals zero modulo two. We say that the edge $e$ is \emph{$\varphi$--even} if $\varphi(e) = 0$ and \emph{$\varphi$--odd} if $\varphi(e) = 1.$
It follows that a tetrahedron falls into one of the following categories, which are illustrated in Figure~\ref{fig:Z2-homology class}:

\begin{itemize}
\item[] Type $\Tq$: A pair of opposite edges are $\varphi$--even, all others are $\varphi$--odd.
\item[] Type $\Tt$: The three edges incident to a vertex are $\varphi$--odd, all others are $\varphi$--even.
\item[] Type $\Tee$: All edges are $\varphi$--even.
\end{itemize}

\begin{figure}[htb]
  \centerline{\includegraphics[height=2.4cm]{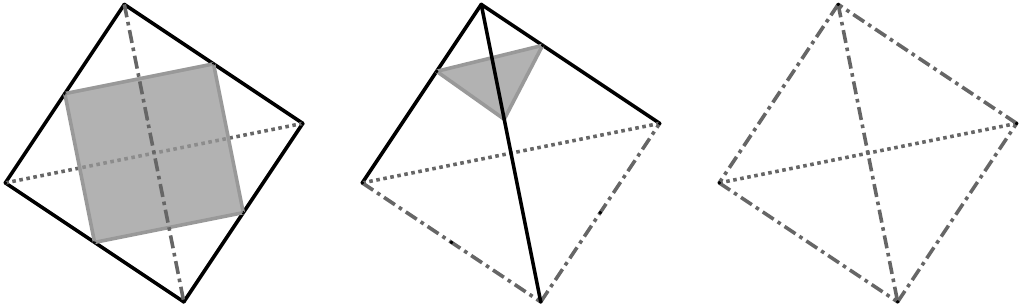}}
  \caption{Tetrahedra of type $\Tq,$ $\Tt$ and $\Tee.$ Grey and dashed edges are $\varphi$--even. \label{fig:Z2-homology class}}
\end{figure}

If $\varphi$ is non-trivial, one obtains a unique normal surface, $S_\varphi = S_\varphi(\tri),$ with respect to $\tri$ by introducing a single vertex on each $\varphi$--odd edge. This surface is disjoint from the tetrahedra of type $\Tee;$ it meets each tetrahedron of type $\Tt$ in a single triangle meeting all $\varphi$--odd edges; and each tetrahedron of type $\Tq$ in a single quadrilateral dual to the $\varphi$--even edges. Moreover, $S_\varphi$ is Lefschetz dual to $\varphi$ and is termed the \emph{canonical (normal) surface dual to $\varphi.$} 
Given $0 \neq c \in H_2(\manifold; \Z_2)$ we also write $S_c = S_{D_\manifold(c)}$ and call $S_c$ the \emph{canonical (normal) representative} of $c.$

\begin{lemma}\label{lem:chi bounds norm for canonical}
Let $\manifold$ be a cusped hyperbolic 3--manifold with ideal triangulation $\tri$ and $0 \neq c \in H_2(\manifold; \Z_2).$ Then $|| \ c \ || \le \chi_{-} (S_c) = - \chi (S_c).$ 
\end{lemma}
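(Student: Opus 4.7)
The plan is to prove the inequality and the equation separately. The inequality $|| \ c \ || \le \chi_{-}(S_c)$ will follow immediately once I know that $S_c$ represents the class $c$; the equation $\chi_{-}(S_c) = -\chi(S_c)$ will follow once I rule out components of $S_c$ with positive Euler characteristic.

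First I would verify that $[S_c] = c$. By the construction recalled in Section~\ref{subsec:colouring}, $S_c = S_{D_\manifold(c)}$ is by construction Lefschetz dual to the cohomology class $D_\manifold(c) \in H^1(\widehat{\manifold}, \widehat{\manifold}^{(0)}; \Z_2)$. Applying $D_\manifold^{-1}$ gives $[S_c] = c$ in $H_2(\manifold;\Z_2)$, and the definition $|| \ c \ || = \min\{\chi_{-}(S) \mid [S] = c\}$ then immediately yields $|| \ c \ || \le \chi_{-}(S_c)$.

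Next I would show that no component of $S_c$ is a $2$--sphere or a projective plane. Each connected component $S_i$ of $S_c$ is itself a closed normal surface in $\tri$, so I can appeal to global properties of normal surfaces in $\tri$. For the $2$--sphere case, since $\tri$ is minimal, Theorem~\ref{lem:bdEfficient} gives that $\tri$ is $0$--efficient, so no normal surface in $\tri$ (in particular, no component of $S_c$) is a $2$--sphere. For the projective plane case, since $\manifold$ is an orientable cusped hyperbolic $3$--manifold, it contains no embedded projective plane, as noted in the introduction. Therefore $\chi(S_i) \le 0$ for every component $S_i$ of $S_c$, so $\max\{0,-\chi(S_i)\} = -\chi(S_i)$, and summing over components gives $\chi_{-}(S_c) = -\chi(S_c)$, as required. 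Note that Klein bottle and torus components (both with $\chi = 0$) do not need to be excluded for this argument, since they contribute $0$ to both $\chi_{-}$ and $-\chi$.

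There is no substantial technical obstacle here; the argument is a direct combination of the construction of the canonical representative in Section~\ref{subsec:colouring}, the implication that minimal ideal triangulations are $0$--efficient (Theorem~\ref{lem:bdEfficient}), and the topological fact that orientable cusped hyperbolic $3$--manifolds contain no embedded projective plane. The one point deserving care is the identification $[S_c] = c$, which is really the content of the assertion in Section~\ref{subsec:colouring} that $S_\varphi$ is Lefschetz dual to $\varphi$; this is built into the construction and needs only to be invoked.
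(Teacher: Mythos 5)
Your overall structure matches the paper's: establish $[S_c]=c$ from the construction, then rule out sphere and projective plane components so that $\chi_{-}(S_c)=-\chi(S_c)$. The duality step and the projective plane step are fine (the paper phrases the latter as: $\manifold$ is irreducible and not $\R P^3$, so no component can be a projective plane).

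The gap is in the sphere step. The lemma is stated for an \emph{arbitrary} ideal triangulation $\tri$ of $\manifold$ --- there is no minimality hypothesis --- so you cannot invoke Theorem~\ref{lem:bdEfficient} to conclude $0$--efficiency; that theorem applies only to minimal triangulations, and a non-minimal ideal triangulation of a cusped hyperbolic manifold may well carry normal $2$--spheres. The paper instead exploits the specific combinatorics of the canonical surface: $S_\varphi$ has exactly one vertex on each $\varphi$--odd edge and none on the $\varphi$--even edges, so every component meets each ideal edge at most once, and every component meets \emph{some} edge (each normal disc has its corners on edges). If a component were a $2$--sphere, irreducibility of $\manifold$ would force it to bound a ball, hence to be separating; but then it would meet every ideal edge --- a properly embedded arc with both ends in the cusps, i.e.\ outside the compact ball --- in an even number of points, contradicting the single (odd) intersection with some edge. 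Replacing your appeal to $0$--efficiency by this parity argument closes the gap and proves the lemma in the generality in which it is stated.
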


\begin{proof}
It follows from the construction and definitions that $[S_c]=c$ and we need to prove the equality in the statement. For this, it suffices to show that no component of $S_\varphi$ is a sphere or a projective plane. 
Since $\manifold$ is irreducible and not $\R P^3,$ no component can be a projective plane. Since each component of $S_\varphi$ meets some edge, and it meets each edge in at most one point, no component can be a sphere.
\end{proof}


\subsection{Rank-2 colouring of edges}
\label{subsec:rank-2 colouring}

Given the subgroup $H = \langle \varphi_1, \varphi_2 \rangle \cong \Z_2 \oplus \Z_2$ of $H^1(\widehat{\manifold}, \widehat{\manifold}^{(0)}; \Z_2),$ we now introduce a refinement of the above colouring. Since $\varphi_1 + \varphi_2 = \varphi_3,$ there are four types of edges:
\begin{itemize}
\item[] edge $e$ is called \emph{$H$--even} or \emph{0--even} if $\varphi_i[e] =0$ for each $i \in \{ 1,2,3\};$ and
\item[] edge $e$ is called $i$--even if $\varphi_i[e] =0$ for a unique $i \in \{ 1,2,3\}.$
\end{itemize}
Let $\{ i,j,k\} = \{ 1,2,3\}.$ The normal corners of the normal surface $S_{\varphi_i}(\tri)$ are precisely on the $j$--even and $k$--even edges. Edge $e$ is $\varphi_i$--even if it is $i$--even or 0--even. It follows that a face of a tetrahedron either has all of its edges 0--even; or it has two $1$--even (or $2$-even or $3$-even respectively) and one 0--even edge; or it has one 1--even, one 2--even and one 3--even edge. Whence a tetrahedron falls into one of the following categories:

\begin{itemize}
\item[] Type $\Tqtt$: One edge is 0--even, the opposite edge is $i$--even, and one vertex of the latter is incident with two $j$--even edges, and the other with two $k$--even edges, where $\{i,j,k\} = \{ 1,2,3\}.$ (There are six distinct colourings of such a tetrahedron, in the following referred to as {\em sub-types}.)
\item[] Type $\Tqq$: A pair of opposite edges are 0--even, all others are $i$--even for a unique $i \in \{ 1,2,3\}.$ (There are hence three distinct sub-types.)
\item[] Type $\Ttt$: The three edges incident to a vertex are $i$--even for a fixed $i \in \{ 1,2,3\},$ and all others are 0--even. (There are hence three distinct sub-types.)
\item[] Type $\Tee$: All edges are 0--even.
\item[] Type $\Tqqq$: Each vertex is incident to an $i$--even edge for each $i \in \{ 1,2,3\}.$ (In particular, no edge is 0--even, opposite edges are of the same type, and there are two distinct sub-types of tetrahedra.)
\end{itemize}
For each type, one sub-type is shown in Figure \ref{fig:multiZ2-homology class}; the dashed grey edges correspond to 0--even edges and the normal discs in $S_{\varphi_i}$ have the same colour as the $i$--even edges. The remaining sub-types are obtained by permuting the colours.

\begin{figure}[htb]
  \centerline{\includegraphics[height=2.4cm]{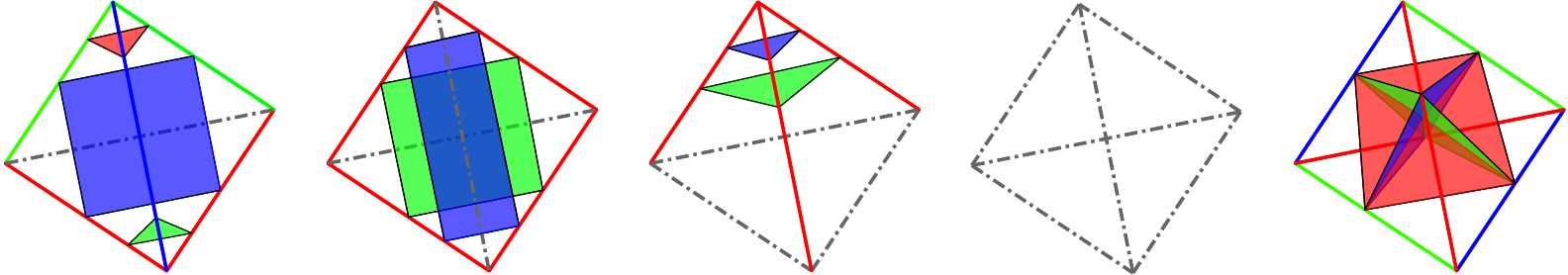}}
  \caption{Tetrahedra of type $\Tqtt,$ $\Tqq,$ $\Ttt,$ $\Tee$ and $\Tqqq.$ Dashed grey edges are 0--even. \label{fig:multiZ2-homology class}}
\end{figure}


\subsection{Combinatorial bounds for triangulations}

The set-up and notation of the previous subsection is continued. Let
\begin{align*}
\nqtt(\tri)&= \text{ number of tetrahedra of type } \Tqtt, \\
\nqq(\tri)&= \text{ number of tetrahedra of type }\Tqq, \\
\ntt(\tri)&= \text{ number of tetrahedra of type }\Ttt, \\
\nee(\tri)&= \text{ number of tetrahedra of type }\Tee, \\
\nqqq(\tri)&= \text{ number of tetrahedra of type }\Tqqq, \\
\even(\tri)&= \text{ number of 0--even edges, }\\
\tilde{\even}(\tri)&= \text{ number of preimages of 0--even edges in } \widetilde{\Delta}.
\end{align*}
The number of tetrahedra in $\tri$ is 
$$|\tri| = \nqtt(\tri) + \nqq(\tri) + \ntt(\tri) + \nee(\tri) + \nqqq(\tri).$$ 
For the remainder of this subsection, we write $\nqtt = \nqtt(\tri),$ etc. 

Let $\hat{K}$ be the complex in $\tri$ spanned by all 0--even edges (i.e., the complex of all edges, faces and tetrahedra of $\tri$ containing only 0--even edges) and let $K$ be the ideal complex obtained from $\hat{K}$ by removing its vertices. 

Let $N$ be a small regular neighbourhood of $K.$ Then $\partial N$ is a normal surface; it meets each tetrahedron in the same number and types of normal discs as $S_{\varphi_1} \cup S_{\varphi_2}\cup S_{\varphi_3}$ except for the tetrahedra of type $\Tqqq,$ which it meets in four distinct normal triangle types instead of three distinct normal quadrilateral types. The normal coordinate of $\partial N$ is thus obtained by taking the sum of the normal coordinates of $S_{\varphi_1},$ $S_{\varphi_2},$ and $S_{\varphi_3},$ and adding to this the \emph{tetrahedral solution} of each tetrahedron of type $\Tqqq.$ (The tetrahedral solution is obtained by adding all triangle coordinates of a tetrahedron and subtracting all quadrilateral coordinates; see, for instance, \cite[Section]{Kang-ideal-2004}, or \cite[Section 2.8]{Tillmann-normal-2008}. Each tetrahedral solution has both positive and negative coordinates.) Hence
$$
\chi(S_{\varphi_1}) +\chi (S_{\varphi_2}) + \chi (S_{\varphi_3}) + \nqqq = \chi (\partial N) = 2 \chi(N) = 2 \chi(K) = - 2 \even +\ntt + 2\nee,$$
where the rightmost equality is a consequence of the following computation: By definition, the Euler characteristic of $K$ equals minus the number of 0--even edges plus the number of triangles with only 0--even edges minus the number of tetrahedra of type $\Tee$ (note that $K$ is ideal and hence has no vertices). Since every triangle occurs in exactly two tetrahedra, every tetrahedron of type $\Ttt$ accounts for one half of a triangle with 0--even edges and every tetrahedron of type $\Tee$ accounts for two triangles of 0--even edges. Hence, in total we have 
$$
\chi (K) = - \even + \frac12 \ntt + 2 \nee - \nee = -\even + \frac12 \ntt + \nee .
$$

In particular, 
$\chi(S_{\varphi_1}) +\chi (S_{\varphi_2}) + \chi (S_{\varphi_3}) + \nqqq$ is even.
Rearranging the above equality gives
\begin{equation}\label{eq:e}
\ntt + 2\nee -\nqqq = 2 \even  + \chi(S_{\varphi_1}) +\chi (S_{\varphi_2}) + \chi (S_{\varphi_3}),
\end{equation}

and hence:
\begin{align}\label{inequ for one vert tri gen}
\tilde{\even} 	&= \nqtt+2\nqq+3\ntt+6\nee\nonumber \\
		&=  2|\tri|  -\nqtt + \ntt +4\nee-2\nqqq  \nonumber\\
		&= 2|\tri|  - \nqtt - \ntt+4 \even+ 2 (\chi(S_{\varphi_1}) +\chi (S_{\varphi_2}) + \chi (S_{\varphi_3})).
\end{align}
\begin{lemma}\label{lem:formula for degree 3 edges}
Let $\manifold$ be a cusped hyperbolic 3--manifold of finite volume, and suppose that $\varphi_1, \varphi_2 \in  H^1(\widehat{\manifold}, \widehat{\manifold}^{(0)}; \Z_2)$ are non--trivial classes with $\varphi_1+\varphi_2 = \varphi_3 \neq 0.$ Let $\tri$ be a minimal triangulation, $S_{\varphi_i}$ be the canonical surface dual to $\varphi_i,$ and let $\even_d$ denote the number of 0--even edges of degree $d.$ Then
\begin{equation}\label{inequ for min tri general}
\even_3= \nqtt +\ntt - 2 (|\tri| + \chi(S_{\varphi_1}) +\chi (S_{\varphi_2}) +\chi (S_{\varphi_3})) + \sum_{d=5}^{\infty} (d-4) \even_d.
\end{equation}
\end{lemma}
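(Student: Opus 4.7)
The plan is to start from equation~(\ref{inequ for one vert tri gen}), which the authors have already derived just before the lemma, and rewrite the left-hand side in terms of the degree-$d$ counts $\even_d$. By definition, $\tilde{\even}$ counts preimages of $0$--even edges in $\widetilde{\Delta}$, so each $0$--even edge of degree $d$ contributes $d$ to $\tilde{\even}$, giving the identity $\tilde{\even} = \sum_{d \ge 1} d \cdot \even_d$, while $\even = \sum_{d \ge 1} \even_d$. Substituting these into~(\ref{inequ for one vert tri gen}) and collecting terms yields
\begin{equation*}
\sum_{d \ge 1}(d-4)\even_d \;=\; 2|\tri| - \nqtt - \ntt + 2\bigl(\chi(S_{\varphi_1}) + \chi(S_{\varphi_2}) + \chi(S_{\varphi_3})\bigr).
\end{equation*}

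Next I would invoke minimality: since $\tri$ is a minimal ideal triangulation of a cusped hyperbolic $3$--manifold, Lemmas~\ref{thm:degone} and~\ref{lem:degree two} guarantee that $\tri$ has no edges of degree $1$ or $2$, so in particular $\even_1 = \even_2 = 0$ and the sum on the left really begins at $d=3$. Splitting off the $d=3$ term (which contributes $-\even_3$) and the $d=4$ term (which contributes $0$) from the remaining sum over $d \ge 5$ gives
\begin{equation*}
-\even_3 + \sum_{d \ge 5}(d-4)\even_d \;=\; 2|\tri| - \nqtt - \ntt + 2\bigl(\chi(S_{\varphi_1}) + \chi(S_{\varphi_2}) + \chi(S_{\varphi_3})\bigr).
\end{equation*}
Solving for $\even_3$ and rearranging the $2|\tri|$ and $2\sum\chi$ terms into the stated form $-2(|\tri| + \sum\chi)$ then yields the claimed formula.

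The argument is essentially bookkeeping, so I do not expect a significant obstacle: the substantive inputs (equation~(\ref{inequ for one vert tri gen}), and the non-existence of degree $1$ and $2$ edges in minimal ideal triangulations) are all available. The only point worth flagging is that one must make sure the two equalities $\tilde{\even} = \sum d\cdot\even_d$ and $\even = \sum \even_d$ are being applied with the same convention (summing over all degrees, with $\even_1 = \even_2 = 0$ by minimality), so that eliminating the low-degree terms before isolating $\even_3$ is legitimate.
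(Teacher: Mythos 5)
Your proposal is correct and is essentially identical to the paper's own proof: the paper likewise invokes Lemmas~\ref{thm:degone} and~\ref{lem:degree two} to conclude the minimum edge degree is three, substitutes $\tilde{\even} = \sum d\,\even_d$ and $\even = \sum \even_d$ into Equation~(\ref{inequ for one vert tri gen}), and rearranges. The algebra checks out.
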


\begin{proof}
Lemmata~\ref{thm:degone} and \ref{lem:degree two} imply that the smallest degree of an edge in $\tri$ is three. One has $\tilde{\even} = \sum d \even_d$ and $\even = \sum \even_d.$ Inserting this into Equation~(\ref{inequ for one vert tri gen}) yields the desired equality.
\end{proof}


\section{Quadrilateral surfaces and minimal ideal triangulations}
\label{sec:quadsurfs}

In this section we prove the following result, first stated in the introduction. The essence of the proof is a counting argument taking into account even edges, compression discs for the canonical normal representatives, and types of tetrahedra. This is modelled on the blueprint for closed 3--manifolds \cite{Jaco-Z2-2017, Jaco-minimal-2009, Jaco-Z2-2013, Nakamura-complexity-2017}.

\begin{reptheorem}{thm:sumofnorms}
  Let $\tri$ be an ideal triangulation of a cusped hyperbolic 3--manifold $\manifold$ and let $H~\le~H_2 (\manifold,\Z_2)$ be a rank $2$ subgroup. Then 
  $$|\tri| \geq \sum \limits_{0 \neq c \in H} || \ c \ ||. $$
  Moreover, in the case of equality the triangulation is minimal, each canonical normal representative of a non-zero element in $H\le H_2 (\manifold,\Z_2)$ is taut and meets each tetrahedron in a quadrilateral disc, and the number of tetrahedra in the triangulation is even.
\end{reptheorem}

Note that the conclusion that  \emph{each canonical normal representative meets each tetrahedron in a quadrilateral disc} is equivalent to saying that \emph{all tetrahedra are of type $\Tqqq$}.
  
\begin{proof}
It suffices to work with a minimal ideal triangulation. For if we can show any such a triangulation satisfies the desired inequality, then clearly an arbitrary ideal one does also. Moreover, if any ideal triangulation admits equality, then it clearly must be minimal.

Let $\tri'$ be a minimal ideal triangulation of a cusped hyperbolic 3--manifold $\manifold.$ Fix a subgroup $H< H_2 (\manifold,\Z_2)$ of rank $2,$ and let $S_i,$ $i=1,2,3,$ be the canonical normal representatives  of the non-zero classes  $0 \neq \varphi_i \in H.$ Let $\tri$ be the minimal triangulation that
has the smallest number of tetrahedra of type $\Tee$ amongst all minimal triangulations of $\manifold$ that can be obtained from $\tri'$ by a sequence of 4-4-moves.

Following the notation used in Section~\ref{sec:normsurfs}, the union of the normal surfaces $S_i,$ $ i = 1,2,3,$ intersects the tetrahedra of $\tri$ in $\nqtt$ tetrahedra of type $\Tqtt,$ $\nqq$ tetrahedra of type $\Tqq,$ and so on. 

Let $d_i$ be the maximal number of pairwise disjoint compression discs for $S_i$, whose union does not separate any component
of $S_i.$
We have $|| c_i || \leq - \chi (S_i)$ due to Lemma~\ref{lem:chi bounds norm for canonical}. Moreover, in the presence of compression discs, we obtain the stronger bound 
\[ \sum \limits_{i=1}^{3} ||\ c_i\ || \le - \sum \limits_{i=1}^{3} (\chi(S_i) +2d_i)\]
since compressions do not change the homology class and do not introduce components that are spheres or projective planes.
To simplify the expression, and with a view towards the second part of the theorem, we write 
\[\sum \limits_{i=1}^{3} ||\ c_i\ ||  = -D - \sum \chi(S_i),\]
where $D\ge 0.$ Here, we take care of the possibility that the canonical surfaces are incompressible but not norm minimising, and we note that each independent compression disc contributes 2 to $D.$

For the sake of contradiction, assume
\[|\tri| < \sum \limits_{i=1}^{3} ||\ c_i\ ||.\]
Lemma~\ref{lem:formula for degree 3 edges} now implies the strict inequality
\begin{equation}
  \label{eq:main}
  \ntt+\nqtt+2 D+\sum_{d=5}^{\infty} (d-4) \even_d < \even_3.
\end{equation}

Hence, proving the first part of Theorem~\ref{thm:sumofnorms} is equivalent to showing that this is impossible. That is, we need to prove that the left hand side of inequality~(\ref{eq:main}) must be at least as large as the right hand side.

The idea of the proof is to counter-balance each contribution to the right hand side with a contribution to the left hand side of (\ref{eq:main}) of equal or larger size. A contribution to the right hand side only arises from a degree three edge, and we have already shown that each such edge is contained in a layered $\partial$--punctured solid torus subcomplex. Any such complex contains at most one edge of degree three. The organising principles for the counting argument are as follows. We identify sets of degree three edges that can be associated with an $H$--even edge $f$ such that 
$d(f)-4$ is as least as large as the size of the associated set of degree three edges (see Claim 2 and the paragraph directly after the proof of Claim 2). 
In doing so, we may double count some of the degree three edges, but we make sure not to double count any of edges $f$.
For the remaining degree three edges, there are only a small number of special cases, in which we need to also take into account the remaining terms $\ntt+\nqtt+2 D$. 
Here we need to make sure that such contributions to the left hand side are only counted at most once. Moreover, in these special cases the contribution to the left hand side of (\ref{eq:main}) is strictly greater than the contribution to the right hand side. 

The following terminology helps organise the counting argument.
A \emph{deficit in (\ref{eq:main})} is when the contribution to the left is bigger than to the right, a \emph{balance in (\ref{eq:main})} is when they are equal, and otherwise we have a \emph{gain in (\ref{eq:main})}. Thus, for proving the first part of Theorem~\ref{thm:sumofnorms} it is sufficient to disregard all deficits and balances and show that it is impossible to construct a triangulation exhibiting a gain. For the second part of Theorem~\ref{thm:sumofnorms} we also need to keep track of balances.

Recall that the core tetrahedron of any layered $\partial$--punctured solid torus $\lst$ is a standard one-tetrahedron layered $\partial$--punctured solid torus $\lst^\star(1, 2, 3).$ A canonical normal representative of a class of $H_2 (\manifold,\Z_2)$ necessarily intersects $\lst^\star(1, 2, 3)$ in one particular quadrilateral type, or not at all. This can be seen by enumerating all $0/1$-colourings of edges in $\lst^\star(1, 2, 3)$ such that every face of $\lst^\star(1, 2, 3)$ has zero or two edges of colour $1$. Accordingly, the intersection of $\lst^\star(1, 2, 3)$ with all three canonical normal representatives of non-trivial classes of $H$ must result in a tetrahedron of type $\Tqq$ or $\Tee$. 

It follows that either all tetrahedra in $\lst$ are of type $\Tqq$ or of type $\Tee$ (this follows directly from looking at the pattern of $H$--odd edges in the boundary). Accordingly we say that $\lst$ is of type $\Tqq$ or $\Tee.$ A layered $\partial$--punctured solid torus $\lst$ of type $\Tqq$ has one $H$--even boundary edge and the other two boundary edges are $H$--odd.

\noindent {\bf Claim 0}: All edges of degree 3 in $\tri$ are $H$--even.

\noindent {\bf Proof of Claim 0}: Due to Lemma~\ref{lem:degreethree}, every edge $e \in \tri$ of degree three can be associated to a maximal layered $\partial$--punctured solid torus $\lst_e \subset \tri$ containing $e$ in its interior. More precisely, $e$ occurs twice in the core tetrahedron of $\lst_e$ and once in the tetrahedron adjacent to the core. If $\lst_e$ is of type $\Tee,$ then naturally $e$ must be $H$--even. Hence, let $\lst_e$ be of type $\Tqq.$ Since $e$ occurs twice in the core tetrahedron and the other two edges in the core tetrahedron occur in it one and three times respectively, they only way the core tetrahedron of $\lst_e$ can be of type $\Tqq$ is for $e$ to be $H$--even.\qed

A layered $\partial$--punctured solid torus $\lst$ has exactly one boundary edge of degree 1 with respect to $\lst.$ We call the other two boundary edges \emph{non-unital}. 

\noindent {\bf Claim 1}: A maximal layered $\partial$--punctured solid torus $\lst$ of type $\Tee$ containing an $H$--even edge of degree three either has an interior $H$--even edge of degree $\geq 6$, or non-unital boundary edges $g$ and $f$ with $d_{\lst}(g) = 3$ and $d_{\lst}(f) \geq 5$.

\noindent {\bf Proof of Claim 1}: In a layered solid torus of type $\Tee$ every edge is necessarily $H$--even. We can thus ignore the edge-coulourings. 

Every layered $\partial$--punctured solid torus $\lst$ in a minimal triangulation $\tri$ and containing a degree three edge is obtained from a layered $\partial$--punctured solid torus of type $\lst^{\star} (1,3,4)$ by iteratively layering onto one of the non-unital boundary edges. In $\lst^{\star} (1,3,4)$, the non-unital boundary edges $g$ and $f$ have degrees $d_{\lst}(g) = 3$ and $d_{\lst}(f) = 5$. If we layer on $f$ we create an interior edge of degree six and we are done. If we layer on $g$ we create an interior edge of degree four, the formerly unital boundary edge is now of degree three, and the degree of $g$ -- still being in the boundary -- is increased by two. Layering on the unital edge contradicts minimality, since it creates an edge of degree two. Iterating this procedure proves the claim.
\qed

\medskip

\noindent {\bf Claim 2}: Let $\lst \subset \tri$ be a maximal layered $\partial$--punctured solid torus that contains an interior edge $ e \in \tri$ of degree three. Then $\lst$ is incident with an $H$--even edge $f$ such that if $m$ is the number of degree three edges of all maximal layered $\partial$--punctured solid tori incident with $f$, then $d(f)-4 \ge m$ (i.e. contributing a deficit or balance in (\ref{eq:main})), unless the following three conditions are satisfied for $\lst$:
\begin{enumerate}
  \item all interior $H$--even edges distinct from $e$ are of degree four, 
  \item the solid torus is of type $\Tqq$, and
  \item the unique $H$--even boundary edge $f$ is of degree one with respect to $\lst.$ 
\end{enumerate}
We call a maximal layered $\partial$--punctured solid torus that contains an interior edge $e$ of degree three and satisfies the conditions (1)--(3) {\em internally unbalanced}. 
  
\medskip

\noindent {\bf Proof of Claim 2}: 
No interior edge of $\lst$ distinct from $e$ can be of degree less than four (see Lemma~\ref{lem:degreethree}).
If $\lst$ contains an interior $H$--even edge of degree $> 4,$ then we choose $f$ to be this edge. It follows that $m=1$, and thus  $d(f)-4 \ge m$ holds. Hence assume that all interior $H$--even edges distinct from $e$ are of degree four.

Suppose that  $\lst$ is of type $\Tee$. All edges of $\lst$ are $H$--even and, by the above, all its interior edges apart from $e$ are of degree four. But then, by Claim 1, there exists a boundary edge $f \in \partial \lst$ satisfying $d_{\lst}(f) \geq 5$. Now
$m-1$ additional maximal layered solid tori containing degree three edges meet $f$ in one of their $H$--even boundary edges. By Lemma~\ref{lem:intersection}, $f$ is of degree at least $4 + 2m.$ Thus $d(f)-4\ge 2m > m$ and we have a deficit in (\ref{eq:main}).
 
The last case is that  $\lst$ is of type $\Tqq$.
Since $\lst$ is of type $\Tqq$ it has a unique $H$-even boundary edge $f \in \partial \lst$. If $d_{\lst}(f) > 1,$ then $d_{\lst}(f) \geq 3.$ Again $m-1$ additional maximal layered solid tori containing degree three edges are incident with $f$. By Lemma~\ref{lem:intersection}, $f$ is of degree at least $2 + 2m.$ Thus $d(f) -4 \ge  2m-2.$ So if $m\ge 2$, then $d(f) -4 \ge m.$
If $m=1,$ then we claim that the degree of $f$ is at least five. Suppose it is not. Then the neighbourhood of $f$ is modelled on a layered $\partial$--punctured solid torus having one more tetrahedron than $\lst$ but extra identifications between the boundary edges. This is not possible due to maximality and Lemma~\ref{lem:lst no folds}. Hence no identifications are possible amongst the edges and so $d(f)\ge 5.$
\qed

\medskip

The set of internally unbalanced maximal layered $\partial$--punctured solid tori can now be canonically partitioned, by grouping together those which meet along their unique $H$--even boundary edge. Fix one such set of $m$ internally unbalanced maximal layered $\partial$--punctured solid tori around a common $H$--even edge $f.$ Its contribution to the right hand side of~(\ref{eq:main}) is $m.$ But $f$ must be of degree at least $2m$ by Lemma~\ref{lem:intersection}, and its contribution to the left hand side of~(\ref{eq:main}) is $\geq 2m-4.$ Hence, there is a deficit if $m > 4.$ We therefore consider the cases where $m \le 4.$ If $m=4$ and $d(f) > 8,$ we also obtain a deficit. 

\noindent {\bf Special cases}:
Since the degree $d = \operatorname{deg}(f)$ of $f$ is bounded below by four, we obtain at most a balance if: 
\[(m,d) \in \{ (4,8),(3, 7), (2,6),(1,5) \},\]
and we obtain at most a gain if: 
\[(m,d) \in \{ (3,6),(2,5),(2,4),(1,4) \}.\]
We call the edges in the above collections the \emph{balancing edges} and the \emph{gaining edges}, respectively.

The maximal potential gain is $2$ in the case $(2,4)$ and it is $1$ in all other cases. Our strategy now is to visit each edge $f$ that gives a potential gain, show that it either achieves a deficit, or (even stronger) that it achieves a deficit for itself and all $H$--even edges in its star. 

Before taking a closer look at these cases, we first prove an auxiliary statement. For this and for the remainder of this proof, we denote the tetrahedra around $f$ by $\Delta_1, \ldots , \Delta_d$ (labelled in cyclic order such that $\Delta_i$ shares a triangle with $\Delta_{i+1};$ some tetrahedron may appear in this sequence multiple times). Moreover, without loss of generality and following Lemma~\ref{lem:intersection}, the internally unbalanced maximal layered $\partial$--punctured solid torus  $\lst_i$ incident to $f$ can be assumed to contain $\Delta_{2i - 1},$ $1 \leq i \leq m.$ 

\medskip

\noindent {\bf Claim 3}: In all four cases $(m,d) \in \{ (3,6),(2,5),(2,4),(1,4) \},$ the edge $f$ is surrounded by $d$ pairwise distinct tetrahedra.

\medskip

\noindent {\bf Proof of Claim 3}: Tetrahedra contained in distinct internally unbalanced maximal layered $\partial$--punctured solid tori must be pairwise distinct. Hence the tetrahedra $\Delta_{2i - 1},$ $1 \leq i \leq m,$ are pairwise distinct since $\Delta_{2i - 1}$ is contained in $\lst_{i}.$ 

A tetrahedron of type $\Tqtt$ cannot occur twice around $f$ because it has only one $H$--even edge and, of course, there can be no tetrahedron of type $\Tqqq.$ Moreover, only $\Delta_3$ in case $(1,4)$ can be of type $\Tee,$ and hence a tetrahedron of type $\Tee$ cannot occur more than once around $f.$ 

If $f$ occurs more than once in a tetrahedron $\Delta$ of type $\Ttt,$ then we must be in $(2,5)$ or $(1,4)$ since $f$ is contained in a face only having $H$--even edges. In case $(2,5)$ we have $\Delta= \Delta_4 = \Delta_5;$ in case $(1,4)$ we either have $\Delta= \Delta_2 = \Delta_3,$ or $\Delta= \Delta_3 = \Delta_4,$ or $\Delta= \Delta_2 =\Delta_4.$ Assume we have one of the three cases where $\Delta = \Delta_i = \Delta_{i+1}.$ Then two faces of $\Delta$ are identified. However, there is only one way to identify two faces of a tetrahedron of type $\Ttt$ in an orientation preserving manner, and this creates an edge of degree one, which is not possible. Hence we are in case $(1,4)$ and $\Delta= \Delta_2 =\Delta_4.$ Then $\Delta_3$ is necessarily of type $\Tee.$ But $\Tee$ cannot share two distinct faces with $\Delta_2 = \Delta_4$ because a tetrahedron of type $\Ttt$ only has one triangle of $H$--even edges.

The remaining tetrahedron type $\Tqq$ can occur in all cases and in all places around $f.$ To start suppose $\Delta_2$ is of type $\Tqq;$ one of the $H$--even edges of $\Delta_2$ is $f$ and we denote the other $f'.$ If $\Delta_2$ appears more than once around $f,$ then $f=f'.$ In particular the two faces of $\Delta_2$ containing $f'$ meet $\Delta_1$ in at least two edges and hence cannot be identified with a tetrahedron $\Delta_{2i - 1},$ $2 \leq i \leq m.$ This shows that in the cases $(3,6)$ and $(2,4)$ all tetrahedra must be pairwise distinct by the symmetry of the situation.

In the case $(2,5)$ this implies that $\Delta_2$ cannot equal $\Delta_4.$ However, $\Delta_2$ also meets a face of $\Delta_3$ and by the same reasoning cannot equal $\Delta_5.$ The only remaining possibility in this case is that $\Delta_4$ equals $\Delta_5.$ Here again, $\Delta_5$ meets $\Delta_1$ in a face, and hence if $\Delta_4$ equals $\Delta_5,$ then $\Delta_1$ and $\Delta_3$ share two edges, which is not possible.

It remains to consider the case $(1,4).$ First suppose that $\Delta_2$ and $\Delta_3$ are the same tetrahedron. There are two possibilities that are combinatorially equivalent. Taking all the face pairings into account, we conclude that $\Delta_2$ is a 1--tetrahedron $\lst^\star(1, 2, 3).$ But this meets $\Delta_1,$ and hence $\lst_1,$ in three edges, a contradiction. See Figure~\ref{fig:distinct1} for an illustration of this case.

\begin{figure}[htb]
  \centerline{\includegraphics[width=.45\textwidth]{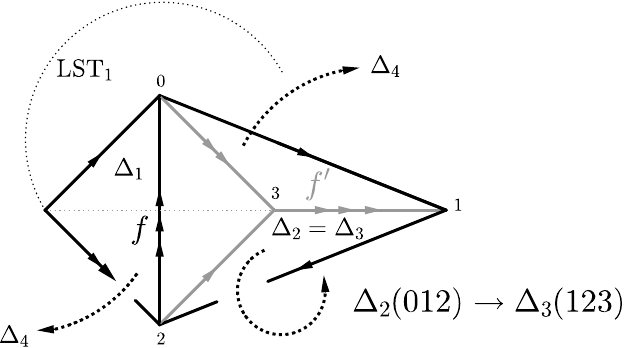}}
  \caption{Identifying $\Delta_2$ and $\Delta_3$ in case $(1,4)$ yields a one-tetrahedron layered solid torus meeting $\lst_1$ in three edges. \label{fig:distinct1}}
\end{figure}

Hence suppose without loss of generality that $\Delta_2$ and $\Delta_4$ are the same tetrahedron. Analysing the two different gluings that arise, we see that $\Delta_2$ is layered on one of the two $H$-odd boundary edges of $\Delta_1.$ But this either contradicts maximality of $\lst_1$ or it contradicts Lemma~\ref{lem:lst no folds}, see Figure~\ref{fig:distinct2}.

\begin{figure}[htb]
  \centerline{\includegraphics[width=.45\textwidth]{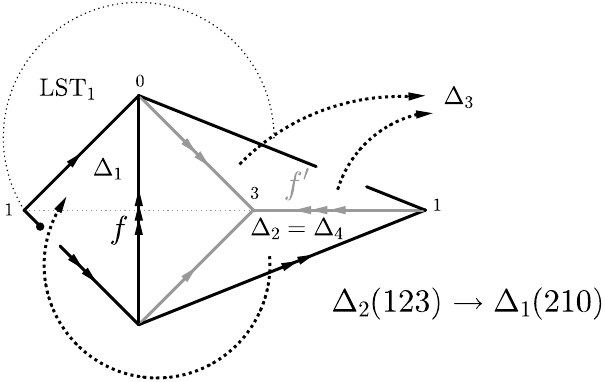}}
  \caption{Identifying $\Delta_2$ and $\Delta_4$ in case $(1,4)$ yields a layering on the boundary of $\lst_1.$ \label{fig:distinct2}}
\end{figure}

This completes the proof of the claim.\qed
 
 \medskip

\noindent {\bf The case $m=2$ and $d=4$}: Since $f$ is $H$--even of degree four, we have to find a contribution to the left hand side of~(\ref{eq:main}) of $2$ to obtain a balance and $>2$ to obtain a deficit.

By Claim 3 and by the preliminary observations above, we can assume that all four tetrahedra surrounding $f$ are distinct and tetrahedra $\Delta_1$ and $\Delta_3$ are of both of type $\Tqq.$ If the quadrilaterals in $\Delta_1$ and $\Delta_3$ are of distinct colours, $\Delta_2$ and $\Delta_4$ must both be of type $\Tqtt.$ Hence, they contribute $2$ to the left hand side of~(\ref{eq:main}). Moreover, since these two tetrahedra then do not have any $H$--even edges other than $f,$ they cannot contribute to any other set of maximal layered solid tori united along an $H$--even edge, and their contribution to~(\ref{eq:main}) is global. In particular, there is at most a balance and not a gain. Now one of the surfaces, $S_i,$ links the edge $f$ in an annulus. We can perform a compression of this annulus, using a disc transverse to $f.$ This is indeed a compression disc since the resulting surface is still connected, due to its intersection with the $H$-odd boundary edges of the layered $\partial$--punctured solid tori. Whence in total, we have a deficit of $4.$ We also note that this compression disc is confined to a neighbourhood of $f$ and that the tetrahedra $\Delta_2$ and $\Delta_4$ have a \emph{unique} $H$--even edge. Thus, the quadrilateral discs of $S_i$ affected by this compression allow no other compressions of this kind and we may associate the total deficit with $f.$

We can thus assume that $\Delta_1$ and $\Delta_3$ are coloured the same. This implies that $\Delta_2$ and $\Delta_4$ must both be of type $\Tqq$ with the same colours as well. In particular, the octahedron that is the boundary of the star of $f$ has eight $H$-odd edges, all of degree two inside the star of $f,$ and a 4--cycle of $H$--even edges, all of degree $1$ inside the star of $f.$ In this case we can perform compressions of two of the surfaces as above. Whence in total we have a contribution of $8$ to the left hand side and a contribution of $2$ to the right hand side. We now allocate a contribution of $3$ to $f$ so that in total we have a deficit of $1$ associated with $f.$ We then associate a contribution of $2.5$ to each of the other two $H$--even edges of $\Delta_2$ and $\Delta_4.$ Hence each of these (if it is in our collection of gaining or balancing edges) now has a deficit associated with it and is not visited again. In particular, any further compression discs we find in the star of an $H$--even edge is disjoint from the ones we just found.

\medskip

This completes the case of gaining edges of type $(2,4)$ showing that they all contribute a deficit.
We may thus assume that the maximal potential gain of an edge we have not yet considered is 1.

\medskip

\noindent {\bf The case $m=3$ and $d=6$}: Since $f$ is $H$--even of degree six, it contributes $2$ to the left hand side of~(\ref{eq:main}). Hence, we have to find an additional contribution of $1$ to obtain a balance and $>1$ to obtain a deficit.

Since $\lst_1,$ $\lst_2$ and $\lst_3$ are internally unbalanced, $\Delta_1,$ $\Delta_3$ and $\Delta_5$ are all of type $\Tqq.$ If not all of them are coloured the same, at least two of $\Delta_2,$ $\Delta_4$ and $\Delta_6$ must be of type $\Tqtt,$ contributing $2$ to the left hand side of~(\ref{eq:main}) and we are done (again, note that the two tetrahedra of type $\Tqtt$ do not have any other $H$--even edges, and their contribution to the star of $f$ is global). 

Hence all tetrahedra $\Delta_i,$ $1\leq i \leq 6,$ are of type $\Tqq,$ all with the same colours. This implies that one can perform a compression of two of the surfaces $S_i,$ $i  \in \{1,2,3\},$ by cutting along the two tubes around the $H$--even edge $f,$ and pasting two discs, each intersecting $f$ once (see Figure~\ref{fig:compression} for an illustration of this process). The compressed surfaces are still embedded.

Moreover, note that three of the quadrilaterals in the tubes linking edge $f$ are contained in layered solid tori $\lst_j$, $1 \leq j \leq 3$. Hence, each such quadrilateral has its vertices identified in diagonally opposed pairs.  In particular, the compressed surfaces still have the same number of connected components, and their components still have non-positive Euler characteristic and Lemma~\ref{lem:chi bounds norm for canonical} applies. Altogether, the possibility of such compressions yields a contribution of $8$ to the term $2D \geq 4 \sum_{i=1}^{3} d_i$ of the left hand side of~(\ref{eq:main}).

\begin{figure}[htb]
  \centerline{\includegraphics[width=.8\textwidth]{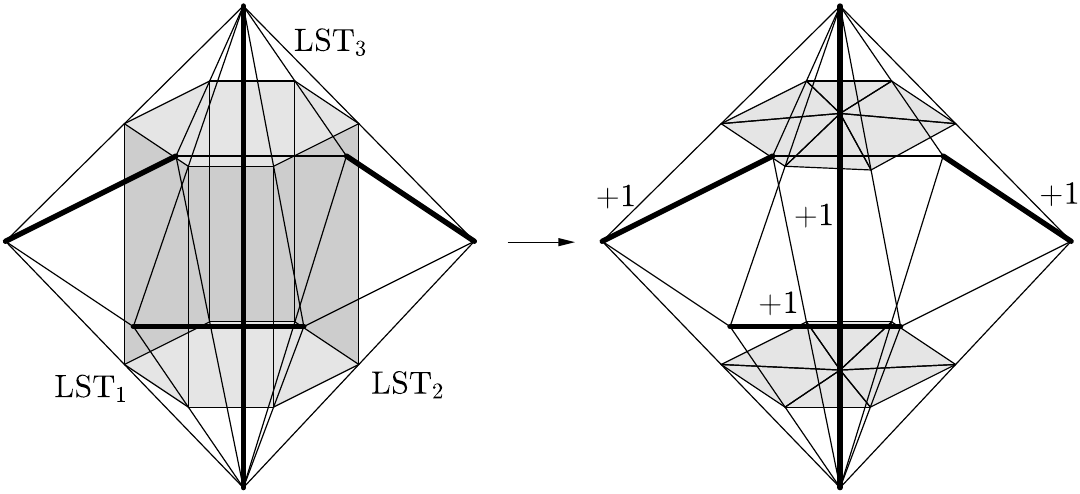}}
  \caption{Compression along an $H$--even edge surrounded by tetrahedra of type $\Tqq.$ The picture only shows one of the two parallel surfaces. \label{fig:compression}}
\end{figure}

We distribute this contribution onto $f$ and the three remaining $H$--even edges of $\Delta_2,$ $\Delta_4$ and $\Delta_6$ equally. Thus, all of the four edges receive a sufficient contribution to give a deficit in (\ref{eq:main}), and are not visited again (except that they may receive additional deficit from other edges). In particular, any other compression discs we find do not intersect the tetrahedra in the star of $f.$

This completes the case of gaining edges of type $(3,6)$ showing that they all contribute a deficit.

\medskip 

\noindent {\bf The case $m=2$ and $d=5$}: Since $f$ is $H$--even of degree five, it contributes $1$ to the left hand side of~(\ref{eq:main}). Hence, we have to find an additional contribution of $1$ to achieve a balance and $>1$ to achieve a deficit.

Again, $\Delta_1$ and $\Delta_3$ can be assumed to be of type $\Tqq.$ If $\Delta_1$ and $\Delta_3$ are of distinct colours, $\Delta_2$ must be of type $\Tqtt.$ If there is at least one other tetrahedron of type $\Tqtt$ incident with $f,$ then there is a contribution to the left hand side of~\ref{eq:main} of $2$ and we have a deficit. Otherwise there are two distinct tetrahedra of type $\Ttt$ also incident with $f.$ We obtain a contribution of $2$ to the left hand side of~(\ref{eq:main}) from these tetrahedra which needs to be shared with the two other $H$--even edges, $g$ and $h$ of the triangle shared by $\Delta_4$ and $\Delta_5.$ We can distribute a contribution of $\frac{2}{3}$ from these tetrahedra to the (possibly not pairwise distinct edges) $f,$ $g$ and $h.$ This yields a total contribution of $\frac{5}{3}>1$ to $f$ and hence a deficit.

Hence, assume that  $\Delta_1$ and $\Delta_3$ are equally coloured and thus $\Delta_2$ must be of type $\Tqq$ with matching colours. If any of $\Delta_4$ or $\Delta_5$ is of type $\Tqtt,$ the other one must be as well, we obtain a contribution of $2,$ and we have a deficit. 

If one of $\Delta_4$ and $\Delta_5$ is of type $\Tqq$ the other one must be as well. As before, this implies that one can perform a compression of two of the surfaces $S_i,$ $i  \in \{1,2,3\},$ by cutting along the two tubes around the $H$--even edge $f.$ Again, there is an additional contribution to the left hand side of $8,$ distributed equally over the four $H$--even edges of $\Delta_2,$ $\Delta_4$ and $\Delta_5$ (see Figure~\ref{fig:compression}). In particular, each of these edges has an associated deficit.

Hence, we can assume that both $\Delta_4$ and $\Delta_5$ are of type $\Ttt,$ both with the same colouring. We obtain a contribution of $2$ to the left hand side of~(\ref{eq:main}) which needs to be shared with the two other $H$--even edges, $g$ and $h$ of the triangle shared by $\Delta_4$ and $\Delta_5.$ However, for this contribution to be insufficient globally, both $g$ and $h$ must be surrounded by some internally unbalanced maximal layered solid tori, with the tetrahedra surrounding $g$ and $h$ and distinct from $\Delta_4$ and $\Delta_5$ all being of type $\Tqq$ and of matching colours. This implies that one can perform two compressions of two of the surfaces $S_i,$ $i  \in \{1,2,3\},$ by cutting along the three times two incomplete tubes around the edges $f,$ $g$ and $h.$ This process replaces the two parallel triangles inside $\Delta_4$ and $\Delta_5$ by pairs of parallel normal triangles of all three other types. Again, quadrilaterals in the tubes linking edges which are contained in a layered solid torus have their vertices identified in diagonally opposed pairs. Hence this operation does not produce any new connected components. See Figure~\ref{fig:doubleCompression} for an illustration of this process in the case of one surface (rather than two parallel ones).

\begin{figure}[htb]
  \centerline{\includegraphics[width=\textwidth]{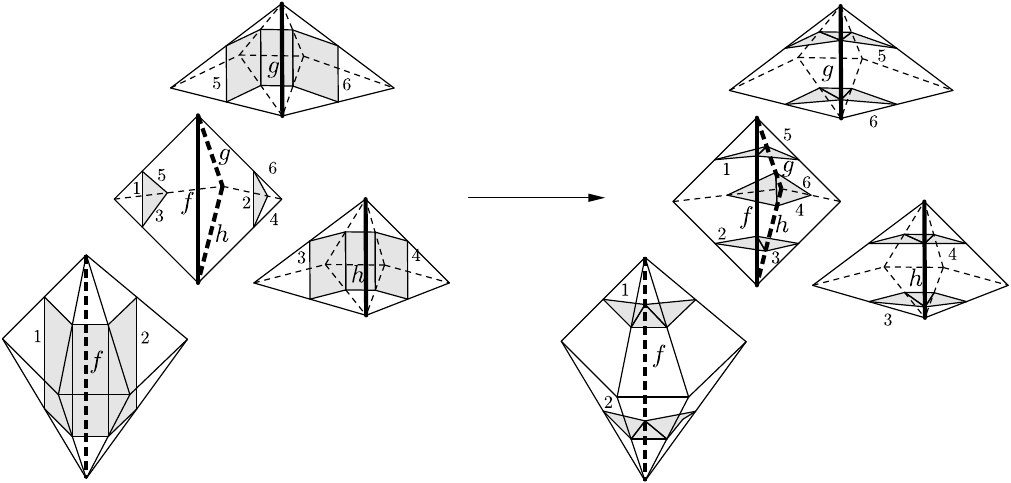}}
  \caption{Double compression along a triangle of $H$--even edges. Only one of the two parallel copies of surfaces is drawn. \label{fig:doubleCompression}}
\end{figure}

This double compression of two surfaces contributes a total of $16$ to the left hand side of~(\ref{eq:main}). We distribute two to each of $f,$ $g$ and $h,$ as well as to the at most three other $H$--even edges in the neighbourhoods of $f,$ $g$ and $h.$ The extra contribution of at least $4$ is a surplus and not needed. We therefore again have a deficit.

This completes the case of gaining edges of type $(2,5)$ showing that they all contribute a deficit.

\medskip 

\noindent {\bf The case $m=1$ and $d=4$}: In this case, $f$ does not contribute to the left hand side of~(\ref{eq:main}) and we have to find a total contribution of $1$ for a balance and $>1$ for a deficit.

Again, $\Delta_1$ can be assumed to be of type $\Tqq.$ If any of $\Delta_2,$ $\Delta_3$ and $\Delta_4$ is of type $\Tqtt$ then we again have two cases. Either at least two of these are of type $\Tqtt$ and 
we obtain a contribution of at least $2$ to the left hand side of~(\ref{eq:main}) and we are done, or one of them is of type $\Tqtt$ and two are of type $\Ttt.$ In this case we distribute the contribution of $2$ to the tetrahedra of type $\Ttt$ evenly over their three $H$--even edges and obtain a total contribution $>1$ for $f.$
If all $\Delta_i,$ $1 \leq i \leq 4$ are of type $\Tqq,$ there exist compressions for two of the surfaces $S_i,$ $i \in \{1,2,3\},$ with a total contribution of $8,$ which can be distributed equally over the four $H$--even edges contained in $\Delta_2,$ $\Delta_3$ and $\Delta_4.$ This again gives a deficit for all $H$--even edges involved.

There are two cases remaining (up to symmetry). In the first case we have (without loss of generality) $\Delta_2$ of type $\Tqq$ and $\Delta_3$ and $\Delta_4$ of type $\Ttt.$ Here, we fall back onto the double compression case along a triangle of $H$--even edges, as done in the case $m=2,$ $d=5$ (see Figure~\ref{fig:doubleCompression}). This again gives a deficit for all $H$--even edges involved.

In the second case we have $\Delta_2$ and $\Delta_4$ of type $\Ttt$ and $\Delta_3$ of type $\Tee.$ In this case, we obtain another minimal triangulation of $\manifold$ by performing a 4-4-move on $f.$ This results in two tetrahedra of type $\Ttt$ and two of type $\Tqq.$ But this contradicts our hypothesis that $\tri$ locally has the smallest number of tetrahedra of type $\Tee.$ In particular, this case does not happen. 

This completes the case of gaining edges, showing that they all contribute a deficit.
	
\medskip
	
To sum up, it is not possible to achieve a gain, and hence we have proved the first part of the theorem, establishing the basic lower bound on the number of tetrahedra.

\medskip

To prove the second part, we make some additional observations regarding the balancing edges.

If the balancing edge $f$ received a contribution $>0$ as a member of the star of a gaining edge, then it has an associated deficit. Hence assume that $f$ has not received such a contribution. If $f$ is incident with a tetrahedron of type $\Tqtt$ then there is a deficit. If $f$ is incident with a tetrahedron of type $\Ttt,$ then this tetrahedron has not been counted yet since otherwise $f$ would have received a contribution. We can therefore allocate $1/3$ to each edge incident with this tetrahedron and hence $f$ has a deficit.

\emph{In particular, this shows that associated to the subcomplex that is the union of all internally unbalanced tori and all tetrahedra of type $\Tqtt$ or $\Ttt$ we have an overall deficit.}

\medskip

These observations regarding the balancing edges suffice to analyse the case of equality. Hence suppose \(|\tri| = \sum \limits_{0\neq c \in H} ||\ c\ ||\) and so
\begin{equation*}
  \ntt+\nqtt+2 D+\sum_{d=5}^{\infty} (d-4) \even_d = \even_3.
\end{equation*}

The above analysis shows that
this equality is only possible if there is no deficit since otherwise the right hand side is smaller than the left hand side. In particular, the subcomplex that is the union of all internally unbalanced tori and all tetrahedra of type $\Tqtt$ or $\Ttt$ must be empty. Whence $\ntt = \nqtt=0.$ From the possible face pairings between tetrahedra of the different types, this implies that either all tetrahedra in $\tri$ are of type $\Tee$ (but then $H$ has rank 0); or all tetrahedra are of type $\Tqq$ (but then $H$ has rank 1); or all are of type $\Tqqq.$ Hence we have the last case and each canonical surface is a quadrilateral surface.
Since all tetrahedra are of type $\Tqqq,$ there are no $H$--even edges. Whence $\even_3=0,$ which implies $D=0$ and the three surfaces therefore are taut. 

\medskip

It remains to show that in {\em every} minimal triangulations of $\manifold$ the canonical normal representatives of non-trivial classes in $H$ are taut and all tetrahedra are of type $\Tqqq$. Recall that $\tri$ was obtained from an arbitrary minimal triangulation $\tri'$ by 4-4 moves.

We first note that every edge $e$ of $\tri$ is contained in at most $d(e) -1$ distinct tetrahedra. Otherwise the unique canonical representative $S$ disjoint from $e$ can be altered by cutting out the tube around $e$ and pasting in two discs orthogonal to $e$. Since $S$ is taut, this operation cannot be a compression of $S$ and thus it must produce two connected components -- one of which must be a normal $2$-sphere. This contradicts the fact that $\tri$ is $0$-efficient due to Theorem~\ref{lem:bdEfficient}.
But if no edge $e$ of $\tri$ is contained in $d(e)$ distinct tetrahedra, it is in particular true that $\tri$ does not admit 4-4-moves. Hence $\tri'=\tri.$ In particular, the minimality assumption on the number of tetrahedra of type $\Tee$ is true for all minimal triangulations of $\manifold$.

\medskip

We now show that if all tetrahedra are of type $\Tqqq$, then the total number of tetrahedra in $\tri$ is even. For this we use the fact that $\manifold$ is orientable and we assume that all tetrahedra are coherently oriented. In a tetrahedron of type $\Tqqq$, every boundary triangle has one 1-even, one 2-even and one 3-even edge and the orientation on the triangle induced by the orientation of the tetrahedron determines a cyclic order of the labels 1, 2, and 3. This is the same cyclic order for all boundary triangles of the same oriented tetrahedron. Hence there are two types of oriented tetrahedra of type $\Tqqq$. Tetrahedra meeting in a triangle must induce opposite cyclic orders on this triangle, see Figure~\ref{fig:even}. Hence, they are of different orientation types. It follows that the number of tetrahedra is even.
\end{proof}

\begin{figure}[htb]
  \centerline{\includegraphics[width=.6\textwidth]{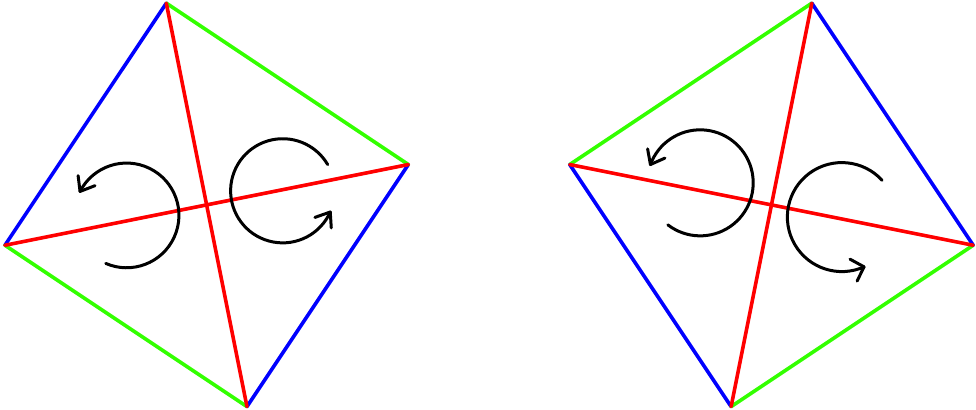}}
  \caption{The two orientation types of a tetrahedron of type $\Tqqq$. \label{fig:even}}
\end{figure}


\section{The monodromy ideal triangulations of once-punctured torus bundles}
\label{sec:monodromy ideal triangulations}

In this section we apply Theorem~\ref{thm:sumofnorms} to a nice class of examples known as the (canonical) monodromy ideal triangulations of once-punctured torus bundles over the circle. In particular, we show that these are minimal. In the last subsection we describe an infinite family of triangulations of semi-bundles that we conjecture to be minimal.


\subsection{Definition of the triangulation}
\label{sec:mit}

Monodromy ideal triangulations were studied in Lackenby~\cite{Lackenby-canonical-2003} and 
Gueritaud \cite{Gueritaud-canonical-2006}, and we refer the reader to these references for details. 

The monodromy ideal triangulations all have the property that they are ideal triangulations for which all edges are of even degree and there is a single ideal vertex. As we are only interested in the case of hyperbolic 3-manifolds amongst once punctured torus bundles, we restrict to monodromy as a map $A \in SL(2,\Z)$ having trace different from $0,\pm 1, \pm 2.$ 

Each tetrahedron in the monodromy ideal triangulation is \textit{layered} on a once punctured triangulated torus with two ideal triangles. In particular, two sets of opposite edges of each tetrahedron are identified in pairs so that the top and bottom pairs of faces form such triangulated once punctured tori. Each tetrahedron induces a diagonal flip on the triangulation of the once punctured torus, and tracing the sequence of flips gives rise to a conjugate of $\pm A.$ Indeed, each flip corresponds to 
one of the standard transvection matrices
\[ 
R = \begin{pmatrix} 1 & 1 \\ 0 & 1 \end{pmatrix} 
\quad \text{and} \quad 
L = \begin{pmatrix} 1 & 0 \\ 1 & 1 \end{pmatrix},
\]
and hence one obtains a factorisation of a conjugate of $\pm A$ in terms of positive powers of $L$ and $R.$ 

A key property of these triangulations is that the normal quadrilateral discs are of two types---called \textit{horizontal} and \textit{vertical}. A horizontal quadrilateral has vertices on the two pairs of opposite edges which are glued together. A vertical quadrilateral has two vertices on such a pair of edges and two vertices on the two edges which are not glued together. 

A crucial fact about normal surfaces in these triangulations is that the Euler characteristic is exactly the negative of the total number of horizontal quadrilaterals, see Lemma~\ref{lem:chiofS}. In particular no normal triangle and no vertical quadrilateral contributes to the Euler characteristic. 

In Sections~\ref{subsec:Reduction using coverings} and \ref{ssec:closedonesided} we prove the following result, first stated in the introduction.

\begin{reptheorem}{thm:torusbundles}
  Monodromy ideal triangulations of once-punctured torus bundles are minimal.
\end{reptheorem}


\subsection{Reduction using coverings}
\label{subsec:Reduction using coverings}

Monodromy ideal triangulations of once punctured torus bundles $\manifold$ have the nice property that any cyclic covering induced by the bundle structure is again a once punctured torus bundle and the triangulation lifts to the monodromy ideal triangulation. Throughout this section, we assume that the monodromy has trace different from $0, \pm 1, \pm 2.$ Hence there is a unique $\Z$--summand of $H_1(\manifold, \Z).$ The cyclic coverings we are using in this section come from the kernels of maps $\pi_1(\manifold) \to H_1(\manifold,\Z) \to \Z \to \Z_k.$ 

We claim that either $H_1(\manifold,\Z_2)$ has rank 3 or there is a  2-- or 3--fold cyclic covering $\widetilde{\manifold}$ of $\manifold$ with the property that $H_1(\widetilde{\manifold},\Z_2)$ has rank $3.$
The natural presentation of the fundamental group of $\manifold$ is of the form
\[\langle \ t, a, b \mid t^{-1}at = \alpha(a), t^{-1}bt = \alpha(b)\ \rangle,\]
where $\alpha$ is the automorphism of the free group $\langle a, b\rangle$ induced by the monodromy $A.$ 
It follows that the  map induced by $A$ on $\Z_2\oplus \Z_2$ is precisely its image $\overline{A} \in SL(2,2) \cong \Sym(3).$
Whence the rank of $H_1(\manifold,\Z_2)$ is 3 if $\overline{A}$ is the identity. Let $k\in \{1, 2, 3\}$ be the order of $\overline{A}.$ 
Since the monodromy of the $k$--fold cyclic covering of $\manifold$ corresponding to the kernel of the map $\pi_1(\manifold) \to \Z \to \Z_k$ gives rise to the map $\overline{A}^k \in SL(2,2),$ it follows that we obtain rank 3 after passing to a $k$--fold cyclic covering. 

It now suffices to show that a monodromy ideal triangulation is minimal for a once punctured torus bundle $\manifold$ with $H_1(\manifold, \Z_2)$ of rank $3.$ For if the rank is not three, we can pass to a 2-- or 3--fold cyclic covering $\widetilde{\manifold}$ with $H_1(\widetilde{\manifold},\Z_2)$ of rank $3$ as in the previous paragraph. If we can prove that the lifted monodromy ideal triangulation of $\widetilde{\manifold}$ is minimal then the monodromy ideal triangulation of $\manifold$ must also be minimal.


\subsection{Closed one-sided incompressible surfaces}
\label{ssec:closedonesided}

In what follows, the {\em weight} of a normal surface $S$ in a triangulation $\tri$ is defined to be the overall number of intersection points of $S$ with the edges of $\tri$.

\begin{lemma}
  \label{lem:chiofS}
  Let $\manifold$ be a once-punctured hyperbolic torus bundle and let $\tri$ be a monodromy ideal triangulation of $\manifold.$ Furthermore, let $S \subset \tri$ be a normal surface of $\tri$ which is a taut least weight representative of an even torsion class of $H_2 (\manifold , \Z).$ Then $S$ meets each tetrahedron of $\tri$ in a single quadrilateral. Moreover, $\chi(S) = - (\# \textrm{ horizontal quads}).$
\end{lemma}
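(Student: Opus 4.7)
The plan proceeds in two stages: first I would show that $S$ intersects each tetrahedron in exactly one quadrilateral (and contains no normal triangles); second, I would deduce the Euler characteristic identity.

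The essential structural observation for the first stage is that in each monodromy tetrahedron $\Delta$, two pairs of opposite edges are identified in pairs, leaving just one pair of opposite edges unidentified. Labelling the resulting four distinct edges as $T$ (top), $B$ (bottom), $D_1, D_2$ (diagonals), the cocycle condition on any $\Z_2$-class $\varphi \in H^1(\widehat{\manifold}, \widehat{\manifold}^{(0)}; \Z_2)$ translates (face-by-face within $\Delta$) to $\varphi(T) = \varphi(B) = \varphi(D_1) + \varphi(D_2) \pmod 2$. A direct case analysis then shows that every monodromy tetrahedron is of type $\Tee$ or $\Tq$ in the notation of \S\ref{subsec:colouring}, never of type $\Tt$. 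Consequently, the canonical normal representative $S_c$ of the mod-$2$ reduction of the class of $S$ is triangle-free. I would then identify $S = S_c$ using the taut and least-weight hypotheses: any $\Z_2$-homologous normal surface differs from $S_c$ by a $\Z_2$-null-homologous normal surface, the minimal option being $2T_v$ (twice the vertex-linking torus) with weight $4|\tri|$, so adding it strictly increases weight, while adding a single $T_v$ would produce a torus component, contradicting tautness. To arrange that $S$ meets each tet (rather than being empty in $\Tee$ tets), I would pass to a cyclic cover $\widetilde{\manifold}$ with $H_1(\widetilde{\manifold}; \Z_2)$ of rank $3$, as in \S\ref{subsec:Reduction using coverings}; the rank-$2$ analysis of \S\ref{subsec:rank-2 colouring} then shows every tet of $\widetilde{\manifold}$ is of type $\Tqqq$, so each canonical rep of a nonzero class in the resulting rank-$2$ subgroup contains exactly one quadrilateral per tet.

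For the second stage, with $T = 0$ and $Q = |\tri|$, the CW-complex Euler characteristic formula yields
\[
\chi(S) \;=\; V_S - E_S + F_S \;=\; \mathrm{wt}(S) - 2|\tri| + |\tri| \;=\; \mathrm{wt}(S) - |\tri|.
\]
Writing $|\tri| = H + V_{\mathrm{vert}}$ where $H$ counts horizontal quads and $V_{\mathrm{vert}}$ counts vertical quads, the identity $\chi(S) = -H$ reduces to $\mathrm{wt}(S) = V_{\mathrm{vert}}$. Since $S = S_c$, the weight of $S$ equals the number of $\varphi$-odd edges of $\tri$. From the tet-local cocycle constraint, a tet has horizontal canonical quadrilateral iff $T, B$ are $\varphi$-even (and $D_1, D_2$ odd), and vertical canonical quadrilateral iff exactly one of $D_1, D_2$ is $\varphi$-even (in which case $T, B$ and the other diagonal are odd). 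An edge-position count tet-by-tet, combined with the multiplicity-$2$ appearance of each diagonal edge within its tet and the constraint $\varphi(T) = \varphi(B)$, then produces $\mathrm{wt}(S) = V_{\mathrm{vert}}$.

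The hardest step will be the weight identity $\mathrm{wt}(S) = V_{\mathrm{vert}}$: an edge of $\tri$ plays potentially distinct roles (top, bottom, or diagonal) in different tetrahedra, and reconciling its global $\varphi$-parity with the tet-local counts demands careful bookkeeping using the cocycle constraints and the layered bundle structure of the monodromy triangulation.
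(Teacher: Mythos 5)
Your cocycle observation is correct and pleasant: in a monodromy tetrahedron the identifications force $\varphi(T)=\varphi(B)=\varphi(D_1)+\varphi(D_2)$, so every tetrahedron is of type $\Tee$ or $\Tq$ and the \emph{canonical} surface $S_\varphi$ is triangle-free (and one can even push this further: an all-even fibre propagates to an all-even triangulation, so for $\varphi\neq 0$ no tetrahedron is of type $\Tee$ and $S_\varphi$ has exactly one quadrilateral per tetrahedron, with no covering needed). Your Euler characteristic computation in stage two is also sound once one knows the surface is a one-quad-per-tetrahedron surface, and the weight identity you flag as hardest has a clean resolution: each edge of $\tri$ is the top diagonal $c'$ of exactly one tetrahedron, and $\varphi(c')=\varphi(D_1)+\varphi(D_2)$ equals $1$ precisely when that tetrahedron carries a vertical quadrilateral, so $\mathrm{wt}(S_\varphi)=V_{\mathrm{vert}}$ follows from this bijection. (The paper instead reads $\chi(S)$ off the circular Morse function: horizontal quads are saddles, vertical quads and triangles are not critical.)

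The genuine gap is the step $S=S_c$. The assertion that any normal surface $\Z_2$-homologous to $S_c$ ``differs from $S_c$ by a $\Z_2$-null-homologous normal surface, the minimal option being $2T_v$'' is not true: normal surfaces in the same homology class are not related by adding vertex-linking components; they can have entirely different quadrilateral types, weights, and topology. The hypothesis on $S$ is only that it is a taut, least-weight normal representative of an integral $2$-torsion class, and the entire content of the lemma is to show that such a surface necessarily has the canonical one-quad-per-tetrahedron structure. This is exactly where the paper does its real work: it intersects $S$ with the once-punctured torus fibres, rules out parallel curves by annular compressions (using least weight), rules out vertex-linking components by a case analysis of how the six normal arcs propagate through a layered tetrahedron, and then identifies $S$ up to isotopy with the canonical quadrilateral surface via the uniqueness and incompressibility of one-sided surfaces in a once-punctured-torus product, following Rubinstein's argument. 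None of this is replaced by your cocycle analysis, which only constrains the canonical representative. The covering trick is also misplaced: whether $S$ meets a given tetrahedron is a property of $S$ in $\manifold$ and cannot be ``arranged'' by passing to a cyclic cover (the preimage of $S$ need not be taut or least weight upstairs); in the paper the covering argument lives at the level of Theorem~\ref{thm:torusbundles}, reducing to the case where $H_2(\manifold,\Z)$ contains a Klein $4$-group, not inside this lemma. Finally, your claim that the rank-$2$ analysis ``shows every tet is of type $\Tqqq$'' is asserted rather than proved; the classification in \S\ref{subsec:rank-2 colouring} only lists the possible types, and excluding $\Tee$ and $\Tqq$ requires an additional argument.
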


\begin{proof}
  Let $S \subset \tri$ be a normal surface of $\tri$ which is a taut representative of a fixed even torsion class $c \in H_2 (\manifold , \Z).$ We assume that $S$ is of least weight amongst all taut representatives. The monodromy ideal triangulation $\tri$ naturally endows $\manifold$ with a circular Morse type function: The layering defines a cyclic ordering of pairs of faces forming a once-punctured torus fibre. Let $T$ be such a two-triangle fibre. 

Consider the intersection $S \cap T.$ This is a family of embedded disjoint normal simple closed curves in the two-triangle structure on $T.$ A well-known elementary fact is that any such curve is either non separating or vertex linking. Next, note that $S \cap T$ cannot contain two parallel curves, since otherwise we can perform an annular compression using an innermost fibre $0$ weight annulus $A$ and find a new homologous normal surface of less weight or less genus or both. 

To be more specific, consider $N(A),$ where $N(A)$ is a small regular neighbourhood of $A.$ Then $S \cap N(A)$ contains two annuli and these are replaced by two annuli in $\partial N(A)$ to form a new surface with the same weight and Euler characteristic as $S$  and representing the same class $c.$ If this surface is disconnected, it is easy to see one component is one-sided and the other is two-sided, as $c$ is a torsion class. Hence we can discard the two-sided component (it must bound in $\manifold$) and the resulting surface still represents $c.$ As this new surface is not normal, it can be isotoped to a normal surface of less weight, giving a contradiction. 

Hence, from now on, we can assume that each fibre meets $S$ in one or two curves. Note that in the latter case exactly one curve links the ideal vertex in the fibre and the other curve is non separating. Let $\gamma$ denote the non separating curve in $S \cap T.$

The curve $\gamma \subset T$ is determined by how often it intersects the three edges of $T.$ Since $\gamma$ is connected, these three intersection numbers are pairwise coprime. This can be seen by the fact that, given two intersection numbers, the third one must either be the sum or the non-negative difference of the first two. 

  A key observation is that $S$ propagates across the tetrahedron $\Delta \in \tri$ layered on top of $T$ in a very well-behaved way. In what follows, we denote the edge of $\Delta$ opposite $T$ by $e,$ and the new fibre opposite $T$ (i.e., the two triangles of $\Delta$ containing $e$) by $T'.$

  Assume that $S \cap T$ contains a vertex linking curve $\alpha.$ $\alpha$ intersects $T$ in six normal arcs (one of each type). Four of them necessarily yield two normal triangles in $\Delta$ and two normal arcs in $T'.$  For the remaining two arcs, three cases need to be considered.

  1) They are in the boundary of a quadrilateral normal disc in $\Delta.$ In this case, the two normal triangles and the normal quadrilateral containing $\alpha$ intersect both triangles of $T'$ in two parallel normal arcs. Moreover, all other normal arcs of $S \cap T'$ must be of the same type (otherwise $S$ self-intersects inside $\Delta$). In particular, $S \cap T'$ has parallel curves, which contradicts our assumption that $S$ is least weight, using the annular compression argument from above. See Figure~\ref{fig:toruslevel_2} on the left.

  2) One of them yields a normal triangle $t,$ and the other one is in the boundary of a normal quadrilateral in $\Delta$ together with a normal arc of $\gamma.$ In this case, $S \cap T',$ again consists of one type of normal arc per triangle of $T',$ with two exceptional normal arcs coming from $t.$ Let one triangle of $T'$ contain $k$ and the other one contain $\ell$ copies of parallel normal arcs. Hence, one edge of $T'$ intersects $S$ once, the second one $k+1 = \ell$ times and the third $k$ times, hence, $\ell + 1 = k,$ a contradiction. See Figure~\ref{fig:toruslevel_2} on the right.

  3) All of the six normal arcs of $\alpha$ are in the boundaries of normal triangles in $\Delta.$ Since 1) and 2) are not possible, this must be the case for all tetrahedra of $\tri$ and $S$ contains a boundary parallel torus. This contradicts the assumption that $S$ is a least weight taut representative of $c.$

\begin{figure}[htb]
 \centerline{\includegraphics[width=.85\textwidth]{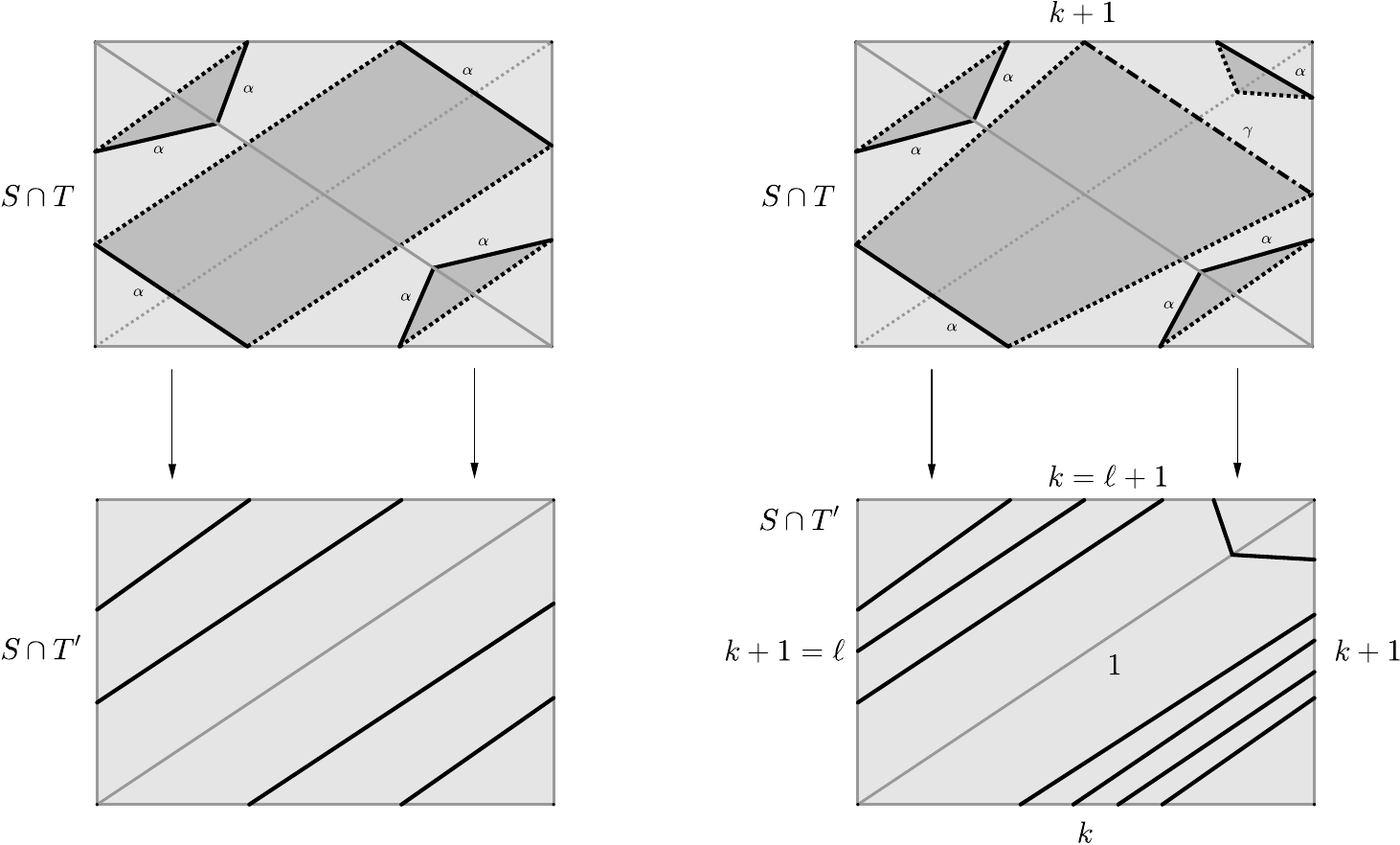}}
 \caption{Left: The normal arcs of $\alpha$ are in two normal triangles and one normal quadrilateral. Right: The normal arcs of $\alpha$ are in three normal triangles and one normal quadrilateral. \label{fig:toruslevel_2}}
\end{figure}

Hence, for the rest of the proof we can assume that $S \cap T$ is connected for each fibre $T.$ Consequently, it follows that $S \cap T$ has only one or two normal arc types in each triangle of $T,$ since otherwise $S \cap T$ contains a vertex linking component.

We claim that $\Delta$ can only contain vertical normal discs of $S,$ unless $\Delta \cap S$ consists of a single horizontal quad and $S$ intersects two edges of $T$ exactly once, and is disjoint from $e.$

 More precisely, there are two cases we have to consider:

  \begin{enumerate}

    \item The intersection of $S$ with $T$ is disjoint from one of the edges of $T.$ In this case the other two edges must intersect $S$ exactly once. If we layer on the edge disjoint from $S$ then there is a single horizontal quadrilateral in the tetrahedron, if we layer on one of the other edges, there is a single vertical quadrilateral. See Figure~\ref{fig:toruslevel} on the right.

    \item The surface $S$ intersects $T$ in two normal arc types per triangle. In this case all normal discs of $S$ in $\Delta$ must necessarily be vertical. To see this note that a horizontal normal quadrilateral can only have two normal arcs of $S \cap T$ in its boundary. Since by the assumption, at least one other normal disc must exist in $\Delta,$ this must be a vertical normal triangle. Arguing as in case 1) and 2) above we can then follow that all three normal arc types must be present per triangle, a contradiction. See Figure~\ref{fig:toruslevel} on the left.

  \end{enumerate}

 It is now easy to verify that the Euler characteristic of $S$ is the negative of the number of horizontal quadrilateral discs, using the circular Morse type function induced by the layering. For each horizontal quadrilateral disc induces a simple saddle type singularity of Morse index one, whereas there are no critical points on vertical triangles or quadrilaterals. 

\begin{figure}[htb]
 \centerline{\includegraphics[width=.85\textwidth]{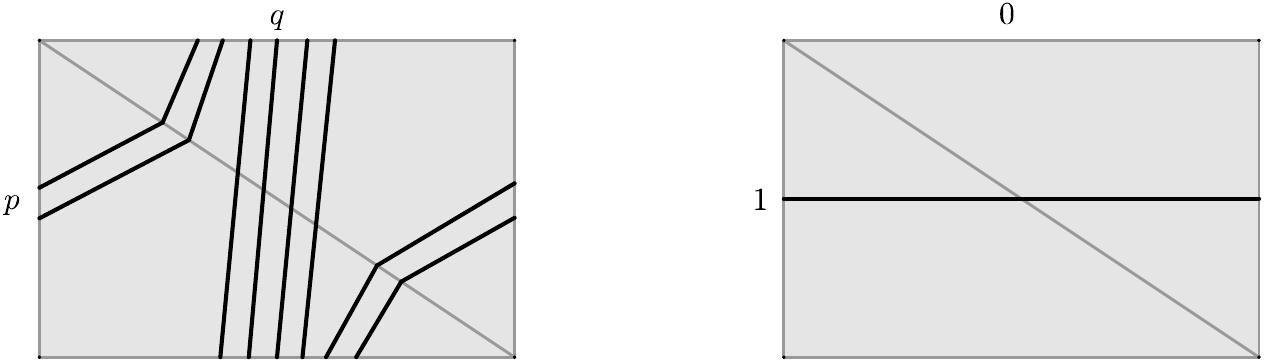}}
 \caption{Left: $S\cap T$ has two normal arc types per triangle of $T.$ Right: $S\cap T$ is disjoint from one edge of $T.$\label{fig:toruslevel}}
\end{figure}

Since $S$ is taut, no component can be a torus or Klein bottle. Hence there must be at least one horizontal quadrilateral. We can cut $\manifold$ open along a corresponding fibre $T$ to form a product of the form $T \times I$ where $T$ is a once punctured torus. This product admits an induced ideal triangulation which is minimal layered. 

There are two steps to complete the proof. The first is to follow the argument in \cite[Theorem 12]{Rubinstein-one-sided-1978} which shows that there is a unique isotopy class of incompressible surfaces in a lens space with even fundamental group. We want to apply a similar argument to an incompressible surface $S^*$ in a product of a once punctured torus and an interval. 

Consider a product $X= T \times I$ where $T$ is a once punctured torus and there are two essential boundary curves  $C, C'$ in $T \times \{0\}, T \times \{1\}$ respectively and the intersection number of $C,C'$ is even when projected to a copy of $T.$ We claim there is a unique incompressible surface $S^*$ up to isotopy with $\partial S^* = C \cup C'.$ 

Pick an annulus $A$ in the product structure of $X$ with boundary curve on  $T \times \{1\}$ parallel to $C'.$ We can assume without loss of generality that $A$ is transverse to $S^*$ and the intersection $A \cap S^*$ contains only arcs which are essential on $S^*$ and have both ends on $T \times \{0\}.$ An innermost such an arc on $A$ cuts off a bigon which can be used to perform a boundary compression of $S^*$ across $T \times \{0\}.$ Exactly as in \cite{Rubinstein-one-sided-1978}, this boundary compression must change the boundary curve $C$ of $\partial S^*$ to a new curve which has smaller intersection number with $C'.$ It is now easy to show as in \cite{Rubinstein-one-sided-1978} that this process produces a unique family of bands attached to a collar about $C$ forming $S^*$ which is therefore unique up to isotopy. 

The second step is to observe that the canonical quadrilateral surface in the induced ideal layered triangulation of the product is incompressible. The proof again follows \cite{Rubinstein-one-sided-1978}. The key idea is to use induction and the recursion formula for the genus of the incompressible surface. By removing the tetrahedra up to the first containing a horizontal quadrilateral, we obtain a new product with an induced minimal layered triangulation. The recursion formula for the change in boundary slope and the genus matches the formula in \cite{Rubinstein-one-sided-1978} and this shows that the canonical quadrilateral surface must be incompressible as it is a minimal genus cobordism surface connecting $C$ and $C'.$ This completes the proof. 
\end{proof}

\begin{corollary}
  \label{cor:rank2minimalbundle}
  Let $\manifold$ be a once-punctured hyperbolic torus bundle and let $\tri$ be a monodromy ideal triangulation of $\manifold.$ Suppose that $H_2 (\manifold , \Z)$ contains a Klein $4$-group $H.$ Then 
  $$ |\tri| = \sum \limits_{0 \neq c \in H} || \ c\ ||, $$
  and in particular $\tri$ is a minimal triangulation. 
\end{corollary}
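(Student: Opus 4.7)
I would combine Theorem~\ref{thm:sumofnorms} with Lemma~\ref{lem:chiofS}. By Theorem~\ref{thm:sumofnorms}, $|\tri| \ge \sum_{0\ne c\in H} \|c\|$, and it remains to produce $\Z_2$--representatives whose total $\chi_-$ matches $|\tri|$. For each of the three non-zero classes $c\in H$ (all of order two in $H_2(\manifold,\Z)$), Lemma~\ref{lem:chiofS} provides a taut least-weight normal representative $S_c$ that meets every tetrahedron of $\tri$ in a single quadrilateral disc, with $-\chi(S_c)$ equal to the number of horizontal quadrilateral discs $h(S_c)$.

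The first key step is to show that every tetrahedron of $\tri$ is of type $\Tqqq$ with respect to $H$. Each monodromy tetrahedron has only four distinct edge classes (since two pairs of opposite edges are identified), and the triangle relations impose a cohomological constraint: the value of any $\Z_2$--cohomology class $\varphi$ on each of the two non-identified opposite edges equals the sum of its values on the two identified pairs. This constraint is incompatible with the $\Tt$-pattern, so no monodromy tetrahedron is of type $\Tt$ with respect to any non-trivial $\varphi$. Since $S_c$ contains a quadrilateral in every tetrahedron, no tetrahedron is of type $\Tee$ with respect to any of the three $\Z_2$--cohomology classes $\varphi_c$ dual to the non-zero elements of $H$; each tetrahedron is therefore of type $\Tq$ with respect to each $\varphi_c$. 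Together, this rules out the mixed $H$-types $\Tee$ and $\Tqq$ and leaves only $\Tqqq$.

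The second key step is counting. In a $\Tqqq$ tetrahedron, the three surfaces $S_{c_1}, S_{c_2}, S_{c_3}$ use the three distinct quadrilateral types, and exactly one of these---the quadrilateral dual to the pair of non-identified opposite edges---is horizontal. Summing over all tetrahedra gives $\sum_c h(S_c) = |\tri|$, hence $\sum_c (-\chi(S_c)) = |\tri|$. Since each $S_c$ realises the $\Z_2$--Thurston norm $\|c\|$, we obtain $\sum_c \|c\| = |\tri|$, matching Theorem~\ref{thm:sumofnorms}. Minimality of $\tri$ follows: any other ideal triangulation $\tri'$ satisfies $|\tri'|\ge \sum\|c\| = |\tri|$.

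The main obstacle is the last assertion, namely that $S_c$ is $\Z_2$--taut (not merely integer-taut), so that $\|c\|=-\chi(S_c)$. I would handle this by identifying $S_c$ with the canonical normal surface $S_{\varphi_c}$ of Section~\ref{subsec:colouring}: both are quadrilateral surfaces with a single quadrilateral in each $\Tqqq$ tetrahedron representing the same $\Z_2$--homology class, and such surfaces are uniquely determined by their class and the rigid $\Tqqq$ structure. Any $\Z_2$--representative with strictly smaller $\chi_-$ would, after normalisation, have to agree with $S_c$ on its quadrilateral coordinates modulo two, and the resulting difference would be a boundary that is excluded by the irreducibility and anannularity of $\manifold$ together with the absence of $H$--even edges in the triangulation.
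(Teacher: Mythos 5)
Your skeleton matches the paper's: the lower bound comes from Theorem~\ref{thm:sumofnorms}, the upper bound from three taut representatives whose horizontal quadrilaterals are counted, and the key count ``exactly one horizontal quadrilateral per tetrahedron among the three surfaces'' is exactly the paper's. However, your route to that count has a genuine gap. The type of a tetrahedron ($\Tee$, $\Tq$, $\Tqqq$, \dots) with respect to $\varphi_c$ is determined by the edge parities $\varphi_c(e)$, equivalently by the \emph{canonical} surface $S_{\varphi_c}$, whereas Lemma~\ref{lem:chiofS} gives you a taut \emph{least-weight} representative $S_c$, which is not a priori canonical. Your inference ``$S_c$ contains a quadrilateral in every tetrahedron, hence no tetrahedron is of type $\Tee$ with respect to $\varphi_c$'' does not follow: in a monodromy tetrahedron a \emph{horizontal} quadrilateral meets each of the four local edge classes an even number of times (twice each of the two identified pairs, zero times each diagonal), so its presence is entirely consistent with all edges of that tetrahedron being $\varphi_c$--even. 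Your closing paragraph tries to repair this by identifying $S_c$ with $S_{\varphi_c}$ via ``the rigid $\Tqqq$ structure'', but that is circular (the $\Tqqq$ structure is what you are trying to establish), and the claim that two normal representatives of the same $\Z_2$--class agree on quadrilateral coordinates modulo two is false in general. (Your argument that no tetrahedron is of type $\Tt$, using the identified opposite edges, is correct and is implicit in the paper.)

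The paper sidesteps tetrahedron types entirely. From the proof of Lemma~\ref{lem:chiofS}, each $S_{c_i}$ meets each fibre $T$ in a single non-separating curve $\gamma_i$, so $0\neq[\gamma_i]\in H_1(T;\Z_2)\cong\Z_2\oplus\Z_2$; since $c_1+c_2+c_3=0$ in the Klein $4$--group, the sum of the three representatives has even weight on every edge, so the $[\gamma_i]$ sum to zero and are therefore the three \emph{distinct} non-zero elements of $H_1(T;\Z_2)$. Hence in each tetrahedron the three quadrilaterals are of the three distinct types and exactly one is horizontal, which is all the counting requires. Finally, your ``main obstacle'' about $\Z_2$--tautness is a non-issue: Lemma~\ref{lem:chiofS} already hypothesises a ($\Z_2$--)taut least-weight representative, and such representatives exist for each non-zero class (discard sphere and torus components, which are null-homologous mod $2$ in this one-cusped hyperbolic manifold, then normalise), so $||c_i||=-\chi(S_{c_i})$ holds by hypothesis and no identification with the canonical surface is needed anywhere in the argument.
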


\begin{proof}
  For each of the three non-trivial classes in $H$ we have least weight taut normal representatives. 
  We know from the first part of Lemma~\ref{lem:chiofS} that each of them meets each tetrahedron of $\tri$ in exactly one normal
  quadrilateral. Moreover, since $H$ is a Klein $4$-group, the sum of all three representatives
  must have edge weight zero mod two. It follows that every tetrahedron contains all three quadrilateral
  types exactly once. In particular, in each tetrahedron exactly one of the three representatives has a 
  single horizontal normal quadrilateral. 

  Hence, by the second part of Lemma~\ref{lem:chiofS} the negative of the sum of Euler characteristics of these three representatives must equal the number of tetrahedra of $\tri.$
  It now follows from Theorem~\ref{thm:sumofnorms} that $\tri$ is minimal. 
\end{proof} 

Combining Corollary~\ref{cor:rank2minimalbundle} with the covering argument provided in \S\ref{subsec:Reduction using coverings} proves Theorem~\ref{thm:torusbundles}.


\subsection{Further examples}
\label{subsec:more examples}

The orientable cusped hyperbolic census up to nine tetrahedra, described and verified in \cite{Burton14Census} and partially shipped with \texttt{Regina}~\cite{regina}, contains $162\,182$ minimal triangulations of $61\,911$ cusped hyperbolic $3$-manifolds. There are exactly 26 triangulations for which the lower bound in Theorem~\ref{thm:sumofnorms} is attained. Of these, $22$ are triangulations of once-punctured torus bundles, but the remaining four are not. Their isomorphism signatures are 
\begin{itemize}
  \item \texttt{gLLMQbeefffehhqxhqq} of manifold \texttt{s781}, 
  \item \texttt{iLLLQPcbefgffhhhxxhaqxxqh} of manifold \texttt{t05624},
  \item \texttt{iLLLQPcbefgffhhhhhqaxhhxq} of manifold \texttt{t06056},
  \item \texttt{iLLwQPcbeefgehhhhhqhhqhqx} of manifold \texttt{t12546}.
\end{itemize}

These examples, as well as infinite families of minimal triangulations they are contained in, will be described in detail in a separate note.


\bibliographystyle{plain}
\bibliography{minimal}


\address{William Jaco\\Department of Mathematics, Oklahoma State University, Stillwater, OK 74078-1058, USA\\{jaco@math.okstate.edu}\\-----}

\address{J. Hyam Rubinstein\\School of Mathematics and Statistics, The University of Melbourne, VIC 3010, Australia\\
{joachim@unimelb.edu.au}\\----- }

\address{Jonathan Spreer\\School of Mathematics and Statistics F07, The University of Sydney, NSW 2006 Australia\\ 
{jonathan.spreer@sydney.edu.au\\-----}}

\address{Stephan Tillmann\\School of Mathematics and Statistics F07, The University of Sydney, NSW 2006 Australia\\
{stephan.tillmann@sydney.edu.au}}

\Addresses

\end{document}